\numberwithin{equation}{section}
\theoremstyle{plain}
\newtheorem{theorem}{Theorem}[section]
\newtheorem{lemma}[theorem]{Lemma}
\newtheorem{proposition}[theorem]{Proposition}
\newtheorem{remark}[theorem]{Remark}
\newtheorem{corollary}[theorem]{Corollary}
\theoremstyle{definition}
\newtheorem*{fact}{Fact}
\renewcommand\le{\leqslant}
\renewcommand\ge{\geqslant}
\newcommand{\refT}[1]{Theorem~\ref{#1}}
\newcommand{\refTs}[1]{Theorems~\ref{#1}}
\newcommand{\refL}[1]{Lemma~\ref{#1}}
\newcommand{\refS}[1]{Section~\ref{#1}}
\newcommand{\refSS}[1]{Section~\ref{#1}}
\xdef\klockan{\the\count1.0\the\count255}
\xdef\klockan{\the\count1.\the\count255}\fi
\newcommand\nopf{\qed}   
\xdef\klockan{\the\count1.0\the\count255}
\xdef\klockan{\the\count1.\the\count255}\fi
\newcommand\indic[1]{\boldsymbol1\cpar{#1}}
\newcommand\dto{\stackrel{\rm d}{\to}}
\newcommand\eqd{\stackrel{\rm{d}}{=}}
\newcommand\ntoo{\ensuremath{{n\to\infty}}}
\newcommand\set[1]{\ensuremath{\{#1\}}}
\newcommand\bigset[1]{\ensuremath{\bigl\{#1\bigr\}}}
\newcommand\dd{\,\mathrm{d}}
\newcommand\ddx{\mathrm{d}}
\newcommand\dx{\ddx x}
\newcommand\bbR{\mathbb R}
\newcommand\bbZ{\mathbb Z}
\renewcommand\P{\operatorname{\mathbb P{}}}
\newcommand\E{\operatorname{\mathbb E}{}} 
\newcommand\ga{\alpha}
\newcommand\gd{\delta}
\newcommand\gG{\Gamma}
\newcommand\eps{\varepsilon}
\newcommand\ttau{\widehat\tau}
\newcommand\xpar[1]{(#1)}
\newcommand\bigpar[1]{\bigl(#1\bigr)}
\newcommand\Bigpar[1]{\Bigl(#1\Bigr)}
\newcommand\cpar[1]{\{#1\}}
\newcommand\bigabs[1]{\bigl\lvert#1\bigr\rvert}
\newcommand\intoi{\int_0^1}
\newcommand\intoo{\int_0^\infty}
\newcommand\intoooo{\int_{-\infty}^\infty}
\newcommand\oi{\ensuremath{[0,1]}}
\newcommand\oooox{(-\infty,\infty]}
\newcommand\HH{\mathrm{H}}
\newcommand\Binomial{\textrm{Binomial}}
\newcommand\Poisson{\textrm{Poisson}}
\newcommand\Po{\Poisson}
\newcommand\Exp{\textrm{Exponential}}
\newcommand\Gumbel{\textrm{Gumbel}}
\newcommand\cI{\mathcal I}
\newcommand\cL{{\mathcal L}}
\newcommand\cN{\mathcal N}
\newcommand\cU{{\mathcal U}}
\newcommand\cX{{\mathcal X}}
\newcommand\cY{{\mathcal Y}}
\newcommand\dtv{d_{\mathrm{TV}}}
\newcommand\Op{O_{\mathrm p}}
\newcommand\bP{\overline P}
\newcommand\bF{\overline F}
\newcommand\htau{\widehat{\tau}}
\newcommand\ceil[1]{\lceil#1\rceil}
\newcommand\bigceil[1]{\bigl\lceil#1\bigr\rceil}
\newcommand\floor[1]{\lfloor#1\rfloor}
\newcommand\lrfloor[1]{\left\lfloor#1\right\rfloor}
\newcommand\rand{R}
\newcommand\taun{\tau_n}
\newcommand\sss[1]{_{(#1)}}
\newcommand\hXi{\widehat\Xi}
\newcommand\hXisn{\hXi_n[s_n]}
\newcommand\hXiisn{\hXi_n'[s_n]}
\newcommand\tPi{\widetilde\Pi}
\newcommand\tPix{\tPi}
\newcommand\tPii{\widetilde\Pi'}
\newcommand\tXi{\widetilde\Xi}
\newcommand\hPi{\widehat \Pi}
\newcommand\qw{^{-1}}
\newcommand\etto{\bigpar{1+o(1)}}
\newcommand\rhs{right-hand side}
\newcommand\frax[1]{\{#1\}}
\newcommand\ii{\mathrm{i}}
\newcommand\qx{\chi}
\newcommand\hq{\widehat q}
\newcommand\hqx{\widehat{\qx}}
\newcounter{CC}
\newcommand{\CC}{\stepcounter{CC}\CCx} 
\newcommand{\CCx}{C_{\arabic{CC}}}     
\newcommand{\CCreset}{\setcounter{CC}0} 
\newcounter{cc}
\newcommand{\cc}{\stepcounter{cc}\ccx} 
\newcommand{\ccx}{c_{\arabic{cc}}}     
\newcommand{\ccdef}[1]{\xdef#1{\ccx}}     
\newcommand{\ccname}[1]{\cc\ccdef{#1}}    
\newcommand{\ccreset}{\setcounter{cc}0} 
\newcommand\gax{\gamma}
\newcommand\gaa{\alpha}
\newcommand\tb{a}
\newcommand\rmH{{\rm H}}
\begin{document}

\begin{frontmatter}
\title{Maximal Counts in the Stopped Occupancy Problem}
\runtitle{Stopped Occupancy Problem}

\begin{aug}
\author[A]{\fnms{Alexander}~\snm{Gnedin}\ead[label=e1]{a.gnedin@qmul.ac.uk}},
\author[B]{\fnms{Svante}~\snm{Janson}\ead[label=e2]{svante.janson@math.uu.se}\orcid{0000-0002-9680-2790}}
\and
\author[C]{\fnms{Yaakov}~\snm{Malinovsky}\ead[label=e3]{yaakovm@umbc.edu}\orcid{0000-0003-2888-674X}}
\address[A]{School of Mathematical Sciences, Queen Mary University of London \printead[presep={ ,\ }]{e1}}

\address[B]{Department of Mathematics, Uppsala University \printead[presep={,\ }]{e2}}

\address[C]{Department of Mathematics and Statistics, University of Maryland, Baltimore County \printead[presep={,\ }]{e3}}
\end{aug}

\begin{abstract}
We revisit a version of the classic occupancy scheme, where
balls are thrown until almost all boxes receive a given number of balls.
Special cases are widely known as coupon-collectors and dixie cup problems.
We show that as the number of boxes tends to infinity, the distribution of the maximal occupancy count
does not converge, but can be  approximated by a convolution of two Gumbel
distributions,
with the approximating distribution having oscillations close to periodic on a logarithmic scale.
We pursue two approaches: one relies on lattice point processes obtained by  poissonisation of the number of balls and boxes, and the other employs interpolation of the multiset of occupancy counts to a point process on reals.
This  way we gain considerable insight in known asymptotics obtained previously by mostly analytic tools.
Further results concern the moments of maximal occupancy counts and ties for the maximum.
\end{abstract}

\begin{keyword}[class=MSC]
\kwd[Primary ]{60G70}
\kwd{60G55}
\kwd[; secondary ]{62G32}
\end{keyword}

\begin{keyword}
\kwd{Stopped occupancy problem}
\kwd{extreme values}
\kwd{point processes}
\kwd{poissonisation and bi-poissonisation}
\end{keyword}

\end{frontmatter}

\section{Introduction}
\noindent
In the classic sequential occupancy scheme balls are thrown independently in
$n$ boxes, with
each ball landing with equal probability $1/n$ in each box.
The allied waiting time problems concern the distribution of the random number of trials required to satisfy  specified occupancy conditions for boxes \cite{JK}.
Technically, the random variables in focus here are stopping times that terminate the allocation process by way of an adapted  nonanticipating rule.
From the early days of probability theory such questions attracted much attention and were studied under different guises.
For instance,  a  problem treated in de Moivre's seminal treatise  \cite{DeMoivre} asks one
`to find in how many trials [a gambler] may with equal chance [i.e., probability about $1/2$] undertake  with a pair of common dice to throw all the doublets'.
A  better known   modern textbook example  is the  coupon collector's problem (CCP), which  in terms of the occupancy scheme deals
with  the number of balls thrown until no empty boxes are left.
The model where the balls are thrown until every box contains
more than $m$ balls is sometimes called the dixie cup problem \cite{Ilienko}.

In the present paper we are not primarily interested in the stopping time
itself, i.e., the required number of balls, but rather
in the configuration when the stopping occurs.
More precisely, we study the largest number of balls in a box
at that time, and also the second largest, and so on.
 Ivchenko in a series of papers
\cite{Ivchenko2,Ivch1, Ivch2}
studied a wide range of  statistics of the sequential occupancy scheme terminated by a stopping time;
a summary of  some of his work  is found in  \cite{IvSu}.
The present note is inspired by his result on the maximum box occupancy count $M_n$ observed when the stopping occurs as in the CCP  or according
to a more general criterion
(this appeared as a special case  of \cite[Theorem 9]{Ivch1} and
 \cite[Theorem 2]{Ivch2}).
The  distribution of $M_n$ does not converge as $n\to\infty$  even after
suitable normalisation; instead the approximating asymptotic distribution
oscillates on a logarithmic
 scale.
This is a common and well understood phenomenon when considering asymptotics of discrete random variables that do not require
scaling due to  bounded variances.
The asymptotic result can formally be stated either as convergence in
distribution of suitable subsequences, or (equivalently)
as approximation in total variation distance with
some family of random variables.
A simple but typical example of this kind is the distribution of  the maximum of $n$ i.i.d.\ geometric random
variables  (see  for instance \cite[Example 4.3]{SJ175}, also see  \refL{LeLem2} in the sequel for a general framework).

In this paper we employ the familiar device  of embedding  the occupancy scheme in a
Poisson process, to link directly properties  of $M_n$ with  the extreme value theory \cite{Mikosch, Resnick}.
This way  we identify
the asymptotic distribution with a discretised convolution of two Gumbel
distributions (\refT{TI2}).
The benefits of  the poissonisation of the number of balls in the occupancy problems have been long known
\cite{GHP, Karlin, Kolchin}.
A novel element in our proof
 is the use of {`bi-poissonisation'}, which amounts to also replacing fixed number of boxes $n$ by random, thus achieving independence of multiplicities   of occupancy counts for each given time.
Moreover,  the time evolution of the occupancy model as a whole becomes similar to the processes familiar from studies in population dynamics and queueing theory,
which sheds a new light on the  processes of small counts (Section \ref{Sdyn}).
A competing approach we  pursue here  (Section  \ref{Spf})  relies on interpolating  the multiset of integer
occupancy counts to a point process on $\bbR$,
then showing that it can be approximated
by a Poisson process with a suitable (exponential) rate, with all points
rounded to integers.
Extending the result about $M_n$ we show that a similar approximation holds  for
any given number of  maximal order statistics of the box occupancy counts
(\refT{stoppedmax}).

We also discuss (Section  \ref{Smom}) the asymptotics of moments  for $M_n$ and other stopped maximal order statistics of occupancy counts.
Complementing  the previous studies \cite{Ivch0, Ivch1, Ivch2, Kolchin, VS},
we  obtain  these indirectly by means of  new exponential tail estimates, which might be of  independent interest for other contexts related  to the extreme value theory.

Sampling from a discrete distribution outputs repeated values.
Criteria are known to ensure that  ties for  the maximum do not vanish asymptotically \cite{BES}.
In particular,  for the geometric source distribution the probability of such a tie is
known to  undergo tiny fluctuations on the logarithmic  scale \cite{Brands, KP}.
In this direction, we  examine (Section  \ref{M-max}) the multiplicity of $M_n$, and show
that the fluctuations turn even smaller due to a smoothing effect caused by random stopping.

Following  \cite{Ivch1, Ivch2}, we consider the occupancy process stopped when there remain only  $\ell$  boxes
that contain at most $m$ balls each.
Thus the case $m=0$ and $\ell=0$ corresponds to the CCP.
The integer parameters $m\geq0$ and $\ell\geq 0$ will be fixed throughout the paper; many
variables below depend on them, but this will not be shown in the notation.
(The parameters  $\ell$ in  \cite{Ivch1} and $m$ in
\cite{Ivch2} can be allowed to vary with $n$, but we will only focus
on the case of fixed values where the results turn out to be most interesting.)

\subsection{\bf Notation}
For common probability distributions we use the self-explaining notation, for instance  Poisson$(t)$, Geometric$(\alpha)$.
For identity and convergence in distribution we write $\stackrel{\rm d}{=}$ and $\stackrel{\rm d}{\to}$, respectively.
The total variation distance between the distributions of random elements $X$ and $Y$  is denoted $d_{\rm TV}(X,Y)$.
The symbols $\lfloor ~ \rfloor, \lceil  ~\rceil$ and $\{ ~ \}$ denote, respectively,  the  floor, the ceiling  and the fractional part functions.
For shorthand we set $L:=\log n$ and $\log L:=\log\log n$.
 Unspecified limits and asymptotic relations
such as $o(1)$, $f_n\sim g_n$
(meaning $f_n/g_n\to 1$, or equivalently $f_n=g_n(1+o(1))$)
and $f_n\asymp g_n$ (meaning that $f_n=O(g_n)$ and $g_n=O(f_n)$)
all presume $n\to\infty$.
We say that some limit  holds uniformly in $x=o(f_n)$
 if it holds uniformly for all $x$ with $|x|\le g_n$ for any function
 $g_n=o(f_n)$; the uniformity in $x=O(f_n)$ is understood similarly.
$\Op(1)$ means bounded in probability (i.e., tight), and $o_{\rm p}(1)$ means convergence to $0$ in probability.
The term `with high probability'   (w.h.p.)\ will mean that a certain event has probability converging to one as $n\to\infty$,
and `almost surely' (a.s.) will mean an event of probability one.

\section{The poissonised occupancy scheme}\label{S2}
\noindent
We will be dealing with
a standard continuous-time version of the discrete sequential
occupancy scheme.
Let $(\Pi_i(t), t\geq 0), i\in{\mathbb Z}_{>0},$ be i.i.d.\ replicas of a
Poisson counting process $(\Pi(t), t\geq 0)$ with unit rate.
We interpret $\Pi_i(t)$ as the number of balls allocated  into  box $i$ by time $t$;
this box occupancy count appears in the literature under different names such as score or load, to mention a few.

The problem with $n$ boxes concerns the occupancy counts $\Pi_i(t)$ for $i\in[n],$ where  $[n]:=\{1,\dots,n\}$.
The aggregate arrival process to the batch of $n$ boxes is Poisson with rate
$n$. Given such an arrival  occurring at  time $t$,
the ball is allocated into each of the
$n$ boxes with the same probability $1/n$,
independently of the past allocations.
Thus the random sequence of states of the set of $n$ boxes follows the same
dynamics as in the classic discrete-time occupancy scheme studied in detail in \cite{Kolchin}.
But keep in mind that  the time scale differs
 from the discrete case in that the mean number of balls dropped in $t$ units of continuous time is $nt$.

The advantage of the poissonised model is the exact independence among the boxes, whereas in the discrete-time occupancy scheme the independence
only holds asymptotically for large number of balls  \cite{Karlin, Kolchin}.
Moreover,
the adopted setting with infinitely many  $\Pi_i$'s allows one to consistently define  the occupancy schemes
for all $n$ on the same probability space.

For  point and cumulative  probabilities of  the Poisson distribution we use
the notation
\begin{align}
  p_r(t)=e^{-t}\frac{t^r}{r!},~
 P_r(t)=\sum_{i= 0}^r p_i(t), ~
\overline{P}_r(t)=1-P_r(t),
\end{align}
where $r\in{\mathbb Z}_{\geq0}, t\geq0$.
These are further related via the standard formulas
\begin{eqnarray}\label{Pp}
\overline{P}_r(t)&=&p_r(t)\sum_{i=1}^\infty \frac{t^i}{(r+1)\cdots(r+i)}, \\
\label{ingamma}
  P_r(t)&=&\int_t^\infty p_r(s){\rm d}s,
\end{eqnarray}
where (\ref{ingamma}) connects Gamma  and Poisson distributions.
If $t,r \to \infty$ so that $\limsup t/r<1$, then from (\ref{Pp}) follows the asymptotics
\begin{equation}\label{pPas}
\overline{P}_r(t)\sim  \frac{t}{r-t}\,p_r(t).
\end{equation}

Let, as in the Introduction,
$m\geq0$ and $\ell\geq0$ be fixed  integers.
Define $\tau_n$ to be the first time when  there remain only  $\ell$ out of
 $n$ boxes
that contain at most $m$ balls each, that is
\begin{align}\label{taun}
\tau_n:=\min\{t\geq0:  \Pi_i(t)> m {\rm~for  ~all~but~} \ell~{\rm indices~}i\in[n]\}
\end{align}
(we assume $n> \ell$ to ensure $0<\tau_n<\infty$ a.s.).
In particular, for $m=0,\ell=0$ this   is the  time when each of the $n$ boxes becomes occupied by at least one ball,
which is the termination condition in the CCP.
The case $\ell=0, m\geq1$ aligns with the dixie cup problem.

Since the Poisson distribution is discrete, repetitions among the occupancy counts $\Pi_i(t)$
occur with positive probability.
Let $M_{n,1}(t)\geq\cdots\geq M_{n,n}(t)$ be the nonincreasing sequence of  order statistics of  $\Pi_1(t),\ldots,\Pi_n(t)$.
The order statistics capture the allocation of some random number of indistinguishable balls into  $n$ indistinguishable boxes.
From a combinatorial viewpoint, this  can be regarded as  a generalised partition of an integer into $n$ parts which can be zero.
The same data can be equivalently encoded
in the sequence of {\it multiplicities}
\begin{align}
\mu_{n,r}(t):=\#\{i\in [n]: M_{n,i}(t)=r\}, ~r\in{\mathbb Z}_{\geq0},
\end{align}
that count repetitions among the box occupancy counts.
In particular, $\mu_{n,0}(t)$ is the number of empty boxes.
By independence among the boxes
\begin{align}
\mu_{n,r}(t)\stackrel{\rm d}{=}{\rm Binomial}(n, p_r(t)),
\qquad r\in{\mathbb  Z}_{\geq 0},
\end{align}
and the joint distribution of the multiplicities is multinomial with infinitely many classes.
The study of  multiplicities for large number of balls and boxes is
a central theme in the occupancy problems \cite{Kolchin}.

The equivalence of the two descriptions of the allocation of balls is
established by the relations
\begin{align}
M_{n,i}(t)\leq k \Longleftrightarrow \sum_{j=1}^\infty \mu_{n,k+j}(t)\leq i-1
\end{align}
for $i\in[n], k\in{\mathbb Z}_{\geq 0}$. Specifically, for the largest box occupancy count we have
\begin{align}
M_{n,1}(t)=\max\{r :\mu_{n,r}(t)>0\}.
\end{align}

With these notations, the number of balls thrown by
time $t$ has the threefold representation
\begin{align}
\sum_{i=1}^n\Pi_i(t)=\sum_{i=1}^n M_{n,i}(t)=\sum_{r=0}^\infty r \mu_{n,r}(t),
\end{align}
Note also that the random variables  of our primary interest
are the largest few occupancy numbers when we stop
(which are the same in the discrete-time and continuous-time models),
that is
$M_{n,i}(\tau_n)$'s with $i$ less than some bound not depending on $n$.
We therefore use the shorthand notation
\begin{align}\label{Mni}
M_{n,i}:=M_{n,i}(\tau_n).
\end{align}
We also sometimes use the even shorter notation
\begin{align}\label{Mn}
M_n:=M_{n,1}=M_{n,1}(\tau_n)
\end{align}
for the maximum box occupancy count when the allocation is stopped.

\section{Preliminaries on point processes}\label{S3}
\noindent
Throughout we will be exploiting basic facts on Poisson and related point
processes as found in many excellent texts \cite{Kallenberg-rm, Kingman,
LastPenrose, Resnick}.
In this section we remind the bare minimum, also using this opportunity to introduce  constructions
needed in later sections.

A point process  $\rm H$ on $\mathbb R$  (or a more general  Polish space) is a random  locally finite   Borel measure with values in ${\mathbb Z}_{\geq0}$.
Such point process
can be represented as a (finite or infinite) sum
\begin{align}\label{PP}
  \rmH=\sum_i\gd_{\eta_i}
\end{align}
of Dirac masses at random points $\eta_i$.
We will sometimes with a minor abuse of notation write this
as $\rmH=\set{\eta_i}$, thus identifying
the point process
with the multiset $\set{\eta_i}$ of its points, that is its  atoms endowed
with some  multiplicities.
(We prefer the measure-theoretic term `atom' to make difference with
nonrandom points of the background space.)
The intensity measure of $\rm H$ is the function that assigns ${\mathbb E}[{\rm H}(A)]$
to each Borel set $A$.
We  call $\rm H$  simple if the multiplicity of each atom is one almost surely.

Recall that a Poisson point process is a point process $\HH$ where $\HH(A)$ (the number
of points in $A$) has a Poisson distribution for every Borel set $A$, and
these numbers for different, disjoint sets
are independent.

\subsection{Transformations of Poisson processes}\label{S3.1}
The unit rate Poisson process $\Pi$ on ${\mathbb R}_+$ is a simple point process  whose atoms are representable as sums
$\eta_i=E_1+\cdots+E_i$, where the terms
are independent  with standard exponential distribution, so the $i$th atom has  Gamma$(i,1)$ (aka Erlang) distribution satisfying
\begin{align}\label{sa1}
{\mathbb P}[\eta_i\leq t]
=\bP_{i-1}(t),
\qquad t\geq0.
\end{align}
The intensity measure of $\Pi$ is the Lebesgue measure on the halfline.

Inhomogeneous Poisson point processes are  uniquely characterised by their intensity measures.
Such  processes on $\mathbb R$  can be constructed from $\Pi$ by  the  measure-theoretic pushforward  $\Pi\circ f^{-1}$,
which is implemented through transporting the atoms with  function $f$;
thus the multiset $\set{\eta_i}$ is mapped to $\set{f(\eta_i)}$.
The intensity measure  of $\Pi\circ f^{-1}$  is the pushforward of the Lebesgue measure by $f$.

An important role in the extreme-value theory  is played by the {\it exponential} Poisson process $\Xi$
on $\mathbb R$,  obtained as  pushforward of $\Pi$ by $f(t)={-\log t}$, thus
with the intensity  measure
\begin{align}\label{EXi}
\E[\Xi({\rm d}x)]=e^{-x}{\rm d}x,~ x\in {\mathbb R}.
\end{align}
(The name is not common but has been used in the literature,
see \cite[Section 6.2.2]{Berestycki}.)
The atoms of
$\Xi$ comprise a decreasing to $-\infty$ sequence
of  random variables  $\xi_i:=-\log\eta_i$,
with
distribution,
by \eqref{sa1},
\begin{align}\label{sa2}
{\mathbb P}[\xi_i\leq x]=P_{i-1}(e^{-x}), ~~~x\in {\mathbb R},
\end{align}
and thus with density, see \eqref{ingamma},
\begin{align}\label{sa3}
e^{-x}p_{i-1}(e^{-x}), ~~~x\in {\mathbb R}.
\end{align}
In particular,
the largest atom $\xi_1$ of $\Xi$ has the (standard) Gumbel distribution
\begin{align}\label{gumbel}
{\mathbb P}[\xi_1\leq x]=e^{-e^{-x}}, ~~~x\in {\mathbb R}.
\end{align}
The best known instance $i=1$, as well as the distributions \eqref{sa2}
with $i>1$, were introduced in \cite{Gumbel},
which justifies the notation ${\rm Gumbel}(i)$
for the distribution \eqref{sa2} of $\xi_i$.
In this nomenclature the (standard) Gumbel distribution becomes ${\rm Gumbel}(1)$.
For each $i$ there is an associated  scale-location family of distributions.

We define
 $\Xi_{b}:=\Xi+b,~ b\in {\mathbb R},$ to be the  translation  of $\Xi$ with
atoms
$\xi_i+b$; these have shifted Gumbel$(i)$ distributions
\begin{align}
{\mathbb P}[\xi_i+b\leq x]=P_{i-1}(e^{-x+b}), ~~~x\in {\mathbb R}.
\end{align}
Thus, $\Xi_b=\set{\xi_i+b}$ is a Poisson process with intensity measure $e^{b-x}\dx$,
$x\in\bbR$.

\subsection{Lattice point processes} For  a point process with nonatomic intensity measure the probability of an atom occurring at fixed location is zero.
In contrast to that, a point process on  the integer lattice ${\mathbb Z}$ is in essence a two-sided random sequence of multiplicities at integer locations.
In particular,  a  lattice Poisson point process
is identifiable with an array of independent Poisson random variables with given parameters.

We denote by $\Xi^\uparrow_{b}$  the lattice counterpart of  $\Xi_{b}$
obtained by applying the ceiling function:
\begin{align} \label{xi+}
\Xi^\uparrow_{b}:=\set{\ceil{\xi_i+b}}.
\end{align}
We choose  rounding up (rather than down) to have the intensity measures
agreeing on semi-closed intervals $(-\infty,r]$, $r\in\bbZ$,
thus forcing  the distribution functions of the respective atoms to coincide at integers:
\begin{equation}\label{di-co-df}
{\mathbb P}[\lceil \xi_i+b\rceil \leq r]   = {\mathbb P}[ \xi_i+b\leq r]=P_{i-1}(e^{-r+b}), ~~~r\in {\mathbb Z}.
\end{equation}
In the case $i=1, b=0$ we obtain a natural discrete analogue of the Gumbel distribution.
The intensity measure of $\Xi^\uparrow_{b}$ is  supported by $\mathbb Z$,
with masses comprising a two-sided geometric sequence
\begin{align}\label{lxx1}
{\mathbb E}[\Xi^\uparrow_{b}(\{r\})]
=\E [\Xi_{b}(r-1,r]]
=\int_{r-1}^re^{b-x}\dd x
=e^{-r+b}(e-1),
\end{align}
hence with the right tail
\begin{align}\label{lxx1s}
{\mathbb E}[\Xi^\uparrow_{b}(r,\infty]]=
{\mathbb E}[\Xi^\uparrow_{b}[r+1,\infty]]
=\E [\Xi_{b}(r,\infty]]
=e^{-r+b}.
\end{align}


\subsection{The marking theorem and the Poisson shift}
In plain terms, a basic version of  the marking theorem   says that if $\eta_i$'s are random atoms of some Poisson point process $\rm H$ and random `marks' $\zeta_i$'s are i.i.d.\ independent of $\rm H$,
then the pairs $(\eta_i,\zeta_i)$ define a bivariate Poisson process in a product space. This proves very useful to construct other Poisson processes as transforms $\{f(\eta_i, \zeta_i)\}$.
In particular, if $\rm H$ is on $\mathbb R$ and $\zeta_i$'s are real-valued, then the pairwise sums $\eta_i+\zeta_i$ are atoms defining another  Poisson process, whose intensity
measure is the convolution of the intensity measure of $\rm H$ and the distribution of $\zeta_1$.

For a general point process $\rm H$  on $\mathbb R$ and $h\geq0$
we define its
{\it Poisson shift} $T_h\circ{\rm H}$ as the above
operation with  atom-wise adding of  independent $\zeta_i\stackrel{\rm d}{=} {\rm Poisson}(h)$.
Note that if $\rm H$ has $k$ atoms at the same location $x$, then each of them contributes to $T_h\circ{\rm H}$
a unit mass at $x$ shifted by an independent Poisson variable.

Notably, on the exponential Poisson process $\Xi$ the Poisson shift acts
in distribution
like
 a deterministic translation
\begin{equation}\label{PoSh}
T_h\circ \Xi\stackrel{\rm d}{=} \Xi_{(e-1)h}.
\end{equation}
The proof  follows  by a simple calculation found in \cite[p.~153]{Berestycki} and the formula ${\mathbb E}[e^{\zeta_1}]=e^{h(e-1)}$.
Applying $T_h$ to the lattice process  $\Xi^\uparrow$ yields a distributional  copy of  $\Xi^\uparrow_{(e-1)h}$, as is clear from (\ref{PoSh}) since $\zeta_i$'s are integer-valued.

The  {\it Poisson flow} with initial state $\rm H$  is the Markov measure-valued process $(T_h\circ{\rm H}, ~h\geq0)$, where each atom independently of  the others undergoes unit jumps to the right
at the unit rate.
If $\rm H$ is a Poisson point process then so is also every $T_h\circ{\rm H}$.
See the recent monograph \cite{Dorogovtsev} for  the general theory of measure-valued processes.

\subsection{Mixed binomial point processes}
A Poisson point process with finite
intensity measure has a random sum representation
\begin{align}
{\rm H}=\sum_{i=1}^N \delta_{\eta_i},
\end{align}
where the random variables
 $\eta_1,\eta_2,\ldots$ are i.i.d., and $N$ is an independent from
 $\eta_i$'s Poisson random variable whose parameter is equal to the total
 mass of the intensity measure.

The latter form is an instance of  the more general {\it mixed binomial} point process (MBPP) \cite{Kallenberg-rm}, where $N$ is allowed to have  arbitrary distribution
on ${\mathbb Z}_{\geq0}$.
Conditionally on $N=n$, such a MBPP is just  a scatter of $n$ i.i.d.\ random points.
A subprocess obtained by restricting ${\rm H}$ to  a Borel set   $A$ is  again a MBPP
\begin{align}
{\rm H}|_A=\sum_{i=1}^N \indic{\eta_i\in A}   \delta_{\eta_i}\stackrel{\rm d}{=}\sum_{j=1}^{\widehat{N}}    \delta_{\widehat{\eta}_j},
\end{align}
where ${\widehat{N}}$ has a mixed binomial distribution ${\rm Binomial}(N, \alpha)$ with $\alpha={\mathbb P}[\eta_1\in A]$, and
$\widehat{\eta}_j$'s are i.i.d.\ with distribution
${\mathbb P}[{\widehat{\eta_1}}\in B]= {\mathbb P}[{{\eta_1}}\in B\mid \eta_1\in A]$.
If $N$ has a Poisson (respectively,  binomial) distribution
then also ${\widehat{N}}$   has a Poisson (respectively, binomial)  distribution.

For another  MBPP
\begin{align}
  {\rm H}'=\sum_{i=1}^{N'} \delta_{\eta_i},
\end{align}
that only differs from $\rm H$ by the distribution of the total count, presuming
$N, N',\eta_i$'s defined on the same probability space, we will have $\{N=N'\}=\{{\rm H}={\rm H}'\}$. Therefore, for the total variation distance
we have the identity
\begin{equation}\label{dident}
d_{\rm TV}({\rm H},{\rm H}')=d_{\rm TV}(N,N'),
\end{equation}
which follows from the definition of the distance
as the infimum of the non-coincidence probability ${\mathbb P}[{\rm H}\neq{\rm H}']$ taken over all couplings.

\subsection{Weak convergence}\label{SSvague}
Weak convergence (convergence in distribution)
of point processes is defined using  the `vague topology'
in the space of locally finite measures; this means roughly convergence
 in the weak topology of restrictions on compact subsets.
Note that
convergence in the weak topology on $\bbR$ is defined only for
a.s.\ finite point processes with an a.s.\ finite limit process,
which is a situation not applicable here. Instead,
the point processes treated in this paper are point processes $\HH$ on $\bbR$
such that ${\rm H}(x,\infty)<\infty$ a.s.\ for every $x\in {\mathbb R}$;
for such point processes
 we may use the representation ${\rm H}=\sum_{i=1}^\infty\delta_{\eta_i}$
 with $\eta_1\geq\eta_2\geq\ldots$,
setting formally $\eta_i=-\infty$ if the sequence of atoms is finite.
Note that $\eta_i\to-\infty$ as $i\to\infty$ since point processes are
assumed to be locally finite, that is finite on every compact set.

A technical note is that a point process on $\bbR$ such that
${\rm H}(x,\infty)<\infty$ a.s.\ for every $x\in {\mathbb R}$,
also can be regarded as a point process on $\oooox$;
we will not actually put any atoms at $+\infty$, but  the inclusion of
$+\infty$ in the background space increases the family of compact sets (for example,
$[x,\infty]$ becomes compact), which makes the vague topology stronger.
We will use this
one-point closure and note that
convergence of point processes in the vague topology on $\oooox$
is equivalent to the finite dimensional convergence of the ordered sequence of atoms,
as shown by the following lemma
(cf.\ \cite[Lemma 4.4]{SJ136} with a trivial change of variables
$\oooox\to(0,\infty]$).

\begin{lemma}\label{L:point1}
Suppose that\/ $\HH_n$, $1\le n\le\infty$, are point processes on  $\oooox$,
and write $\HH_n=\set{\eta_{n,i}}_{i=1}^{N_n}$ with
$\eta_{n,1}\ge\eta_{n,2}\ge\dots$ and $0\le N_n\le\infty$.
If some $N_n<\infty$, define further $\eta_{n,i}=-\infty$
for $i>N_n$.
Then $\HH_n\stackrel{\rm d}{\to}\HH_\infty$,
in the vague topology for measures on $\oooox$,
if and only if
$(\eta_{n,1},\eta_{n,2},\dots)
\stackrel{\rm d}{\to}(\eta_{\infty,1},\eta_{\infty,2},\dots)$
in the standard sense that all finite dimensional distributions converge.
\qed
\end{lemma}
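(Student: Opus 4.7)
Both directions proceed through the right-tail counting function $N_n(x):=\HH_n((x,\infty])$, linked to the ordered atoms by the identity
\begin{equation*}
\{\eta_{n,i}>x\}=\{N_n(x)\ge i\},\qquad i\ge 1,\;x\in\bbR.
\end{equation*}
Because $\oooox$ is compactified at $+\infty$, every set $(x,\infty]$ is relatively compact, so $N_n(x)<\infty$ a.s.\ by local finiteness, and $N_n$ is a nonincreasing $\bbZgeo$-valued step function of $x$ whose jumps encode the atoms of $\HH_n$.

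For the forward direction I would invoke the standard portmanteau theorem for vague convergence of point processes: $\HH_n\dto\HH_\infty$ yields joint convergence $(N_n(x_1),\ldots,N_n(x_k))\dto(N_\infty(x_1),\ldots,N_\infty(x_k))$ at any levels $x_j$ that are continuity points of $N_\infty$ (meaning $\P[\HH_\infty(\{x_j\})=0]=1$). Continuity levels are dense in $\bbR$ because their complement is at most countable, and the identity above then translates the joint counting convergence into the desired finite-dimensional convergence of $(\eta_{n,i})_i$, with the value $\eta_{\infty,i}=-\infty$ recovered by letting $x\to-\infty$ along continuity levels.

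For the converse I would apply Skorohod's representation theorem to realise the fdd convergence of $(\eta_{n,i})_i$ as a.s.\ convergence of each coordinate (with $-\infty$ allowed as a limit). For a test function $f\in C_c(\oooox)$ pick $a\in\bbR$ with $\operatorname{supp}(f)\subset(a,\infty]$ and $\P[\HH_\infty(\{a\})=0]=1$; this is possible since the exceptional levels form an at most countable set. On the coupled space $K:=N_\infty(a)$ is a.s.\ finite; for $i\le K$ we have $\eta_{n,i}\to\eta_{\infty,i}\in(a,\infty]$ and hence $f(\eta_{n,i})\to f(\eta_{\infty,i})$ by continuity (including the compactification point $+\infty$), while for $i>K$ we have $\eta_{\infty,i}<a$, so $\eta_{n,i}<a$ eventually and $f(\eta_{n,i})=0$. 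Summing the finitely many nontrivial terms yields $\int f\,\ddx\HH_n\to\int f\,\ddx\HH_\infty$ a.s., which is exactly vague convergence on $\oooox$.

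The main obstacle is not analytic depth but careful bookkeeping around the compactification at $+\infty$ and the possibility that $\HH_\infty$ places atoms near the cut-off level $a$, which could upset the a.s.\ identification of the sum over $\{i:\eta_{n,i}>a\}$ with the sum over $\{i:\eta_{\infty,i}>a\}$. Both issues are neutralised by the routine device of selecting $a$ at a continuity level of $N_\infty$; since such levels are dense in $\bbR$, the choice can always be made, and the remaining argument is straightforward.
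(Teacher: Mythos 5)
Your proof is correct in substance, but note that the paper itself does not prove this lemma: it is stated without an argument and justified only by the pointer to \cite[Lemma 4.4]{SJ136}, after the trivial change of variables $(0,\infty]\to\oooox$. Your write-up is therefore a self-contained replacement for that citation, and it follows the route one would expect: in one direction, vague convergence plus the continuous mapping theorem gives joint convergence of the tail counts $N_n(x)=\HH_n((x,\infty])$ at levels $x$ that are a.s.\ not atoms of $\HH_\infty$ (such levels are co-countable, hence dense, and $(x,\infty]$ is relatively compact in $\oooox$ with boundary $\{x\}$), and the identity $\{\eta_{n,i}>x\}=\{N_n(x)\ge i\}$ together with integer-valuedness of the counts converts this into convergence of the joint distribution functions of the ordered atoms on a dense set of levels, which suffices for weak convergence in $[-\infty,\infty]^K$; in the other direction, a Skorokhod coupling and a cut-off at a continuity level $a$ below the support of the test function. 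Two small points would tighten the converse. First, the vanishing of the terms with $i>K$ must be uniform in $i$: arguing coordinate by coordinate leaves an interchange of limit and sum unjustified, but the ordering settles it at once, since $\eta_{n,K+1}\to\eta_{\infty,K+1}<a$ a.s.\ forces $\eta_{n,i}\le\eta_{n,K+1}<a$ simultaneously for all $i>K$ and all large $n$. Second, your exceptional null set depends on $f$ through the choice of $a$; to get a.s.\ vague convergence choose once and for all a countable set of continuity levels $a_j\to-\infty$ and use the one below the support of the given $f$, or alternatively stop at the statement that $\int f\,\ddx\HH_n\to\int f\,\ddx\HH_\infty$ in distribution for every nonnegative $f\in C_c(\oooox)$, which by the standard criterion for random measures already yields $\HH_n\dto\HH_\infty$. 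With these routine adjustments your argument is a complete proof of the lemma, whereas the paper's version buys brevity at the cost of sending the reader to the cited reference.
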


%

\section{Stopped maximum}\label{S4}
\noindent
Recall the notation $M_n:=M_{n,1}(\tau_n)$
for the maximum box occupancy count observed as the allocation is stopped at time $\tau_n$.
The distribution of $M_n$ was studied by Ivchenko in the framework of the
classic discrete-time occupancy scheme.  We here study its distribution using the poissonised continuous-time scheme.

\subsection{Stopping time}\label{StopT}
Let $m\ge0$ and $\ell\ge0$ be fixed, and consider the stopping time
$\tau_n$ in \eqref{taun},
which can be alternatively defined through the multiplicities  as
\begin{align}\label{taun1}
\tau_n=\min\{t: \mu_{n,0}(t)+\cdots+\mu_{n,m}(t)=\ell\},
\end{align}
 where we assume $n>\ell$.

First,
the distribution of the stopping time  follows readily by identifying  $\tau_n$ with the $(\ell+1)$st last time when one of $n$ boxes receives its $(m+1)$st ball.
The distribution of  the $(m+1)$st arrival time to a particular box is ${\rm Gamma}(m+1,1)$,
 whence by independence among the boxes
\begin{equation}\label{taund}
{\mathbb P}[\tau_n\in {\rm d}t]= n \binom{n-1}{\ell} (P_{m}(t))^\ell (\overline{P}_{m}(t))^{n-\ell-1} p_{m}(t){\rm d}t, ~~~t\geq0.
\end{equation}
In greater detail,  the event defining  $\tau_n$  occurs when one box  receives its $(m+1)$st  ball, $\ell$ boxes contain at most $m$ balls each, and the
remaining  $n-\ell-1$ boxes contain at least $m+1$ balls each.
 By exchangeability of the boxes,
the distribution of $M_n$ conditional on $\tau_n$ will not change   if we
also condition on the indices of the boxes that satisfy the said constraints.

This implies   the following fact noticed in \cite[Lemma 1]{Ivch1}.
\begin{fact}\label{FACT}
Conditioned on $\tau_n=t$,
the subsequence of $n-\ell-1$ stopped occupancy numbers of  boxes that received at least $m+1$ balls (strictly) before time $t$
 is  i.i.d.\ and
independent of the complementing subsequence (also i.i.d.)\
of $\ell$ boxes with at most $m$ balls.
Moreover,
 both subsequences have
 truncated\ {\rm Poisson}$(t)$  distributions:
the first one on $\{m+1, m+2,\ldots\}$ and the second  on $\{0,\ldots,m\}$.
\end{fact}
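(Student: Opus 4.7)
The plan is to exploit the mutual independence of the $\Pi_i$'s together with the exchangeability of the $n$ boxes. For each $i\in[n]$ we introduce the time
\[
T_i:=\inf\{t\ge 0:\Pi_i(t)=m+1\}
\]
at which box $i$ receives its $(m+1)$-st ball. The $T_i$'s are i.i.d.\ with Gamma$(m+1,1)$ distribution, hence a.s.\ all distinct. In this notation $\tau_n$ coincides with the $(n-\ell)$-th order statistic $T_{(n-\ell)}$, and a.s.\ a unique ``trigger'' box attains this value.

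By exchangeability of the boxes it suffices to condition on a particular configuration, say the event $\mathcal{E}$ that box~$1$ is the trigger, boxes $2,\ldots,n-\ell$ satisfy $T_i<\tau_n$ (large non-trigger), and boxes $n-\ell+1,\ldots,n$ satisfy $T_i>\tau_n$ (small); the combinatorial factor $n\binom{n-1}{\ell}$ in \eqref{taund} then enumerates the equivalent configurations. By mutual independence of the $\Pi_i$'s, the joint law on $\mathcal{E}$ factorises as
\[
\P[\mathcal{E},\,\tau_n\in\ddx t,\,\Pi_i(t)=k_i \text{ for } i=2,\ldots,n] = p_m(t)\,\ddx t \cdot\prod_{i=2}^{n-\ell}\P[\Pi_i(t)=k_i,\,T_i<t]\cdot\prod_{i=n-\ell+1}^{n}\P[\Pi_i(t)=k_i,\,T_i>t],
\]
where the leading $p_m(t)\,\ddx t$ is the Gamma$(m+1,1)$ density at $t$ contributed by box~$1$. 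Since $\{T_i<t\}=\{\Pi_i(t)\ge m+1\}$, each factor in the first product equals $p_{k_i}(t)$ for $k_i\ge m+1$ and vanishes otherwise, and analogously each factor in the second product equals $p_{k_i}(t)$ for $k_i\le m$. Marginalising (summing over the $k_i$'s) reproduces \eqref{taund}, and dividing joint by marginal yields
\[
\prod_{i=2}^{n-\ell}\frac{p_{k_i}(t)}{\bP_m(t)}\indic{k_i\ge m+1}\cdot\prod_{i=n-\ell+1}^{n}\frac{p_{k_i}(t)}{P_m(t)}\indic{k_i\le m}
\]
as the conditional probability given $\mathcal{E}\cap\{\tau_n=t\}$. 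This is exactly the asserted product of independent truncated Poisson$(t)$ distributions on $\{m+1,m+2,\ldots\}$ and on $\{0,\ldots,m\}$, yielding independence both within each subsequence and between the two subsequences.

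The only mild subtlety is that $\{\tau_n=t\}$ is a null event; this is handled routinely by working with densities in $t$ as above. The main conceptual point is that $\tau_n$ is merely an order statistic of i.i.d.\ Gamma variables, so conditioning on $\{\tau_n=t\}$ collapses to one-sided conditioning $\{T_i<t\}$ or $\{T_i>t\}$ of the independent Poisson processes, and these one-sided conditionings of Poisson$(t)$ are precisely the claimed truncations.
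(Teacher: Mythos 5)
Your proposal is correct and follows essentially the same route as the paper: identify $\tau_n$ as an order statistic of the i.i.d.\ Gamma$(m+1,1)$ arrival times $T_i$, use exchangeability to fix which boxes play which role (as in the derivation of \eqref{taund}), and then use independence of the $\Pi_i$'s so that conditioning on $\tau_n=t$ reduces to the one-sided conditionings $\{\Pi_i(t)\ge m+1\}$ and $\{\Pi_i(t)\le m\}$, giving the truncated Poisson$(t)$ laws. You merely write out explicitly, via the factorised joint density, the argument the paper sketches around \eqref{taund} and Fact~\ref{FACT}.
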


For $t\to\infty$  the truncated Poisson distribution on $\{0,\ldots,m\}$ converges to  the Dirac measure $\delta_m$, because
$p_{r+1}(t)/p_r(t)=t/(r+1)\to\infty$.  Since
$\tau_n\stackrel{\rm p}{\to}\infty$, w.h.p.\
 the number of boxes with exactly $m$ balls immediately before stopping is $\ell+1$
and no box contains lesser number of balls, whence by monotonicity in the stopping condition in (\ref{taun1})
\begin{equation}\label{taun2}
\tau_n=\sup\{t: \mu_{n,m}(t)=\ell+1\}\qquad{\rm w.h.p.}
\end{equation}
(where $\sup\varnothing:=\infty$),
which gives yet another, asymptotic, interpretation of $\tau_n$ in terms of  the sole multiplicity $\mu_{n,m}(t)$.
The relation (\ref{taun2}) holds a.s.\ for $m=0$.

We emphasise (\ref{taun2}) to match the temporal domain we need here with a classification of
asymptotic regimes for the growing number of balls in \cite{Kolchin}.
In the terminology of this book,
the range of $\tau_n$ is the {\it right $m$-domain}, which for the poissonised model can be characterised by the properties
$\mu_{n,m}(t)=O_{\rm p}(1)$,  $\mu_{n,r}(t)=o_{\rm p}(1)$ for $r<m$, and  $\mu_{n,r}(t)\stackrel{\rm p}{\to}\infty$ for $r>m$.

\subsection{Approximating  the distribution of $M_n$}
We turn to the stopped maximum.
By  the virtue of the adopted stopping condition
we cannot have $M_n<m+1$, and for  values $r \geq m+1$ conditioning on the indices yields
\begin{align*}
&{\mathbb P}
[M_n\leq r\, |\,\tau_n=t]=
\qquad\qquad\qquad\qquad\qquad\qquad\qquad\qquad\qquad\qquad\qquad\qquad\qquad\qquad\notag\\&
{\mathbb P}[M_n\leq r\, |\,
\Pi_n(t-)<\Pi_n(t)=m+1
{\rm~ and~}  \Pi_i(t)\le m < \Pi_j(t)~{\rm for~}1\leq i\leq \ell<j\leq n-1].
\end{align*}
Since the event defined by the condition entails $M_n=
\max(\Pi_{\ell+1}(t),\ldots,  \Pi_{n-1}(t))$
 (if $n>\ell+1$),
by independence the last formula becomes
\begin{align}
{\mathbb P}[M_n\leq r \,|\,\tau_n=t]=\prod_{j=\ell+1}^{n-1} {\mathbb P}[\Pi_j(t)\leq r\,|\, \Pi_j(t)\geq m+1]=\left(1-\frac{\overline{P}_r(t)}{\overline{P}_{m}(t)}\right)^{n-\ell-1}.
\end{align}
Integrating out the stopping time we obtain the unconditional distribution of the stopped maximum in the form of a mixture
\begin{equation}\label{distrM}
{\mathbb P}[M_n\leq r]=
\int_0^\infty   \left(1-\frac{\overline{P}_r(t)}{\overline{P}_{m}(t)}\right)^{n-\ell-1} {\mathbb P}[\tau_n\in{\rm d}t], ~~~~\qquad\qquad~~r\geq m+1.
\end{equation}
Finding the asymptotics of   (\ref{distrM}) requires approximating both the mixing distribution and the integrand.

The first part is a customary task from the extreme-value theory, which we
include for completeness.
The stopping time $\tau_n$ has the same distribution as the $(\ell+1)$st order statistic for $n$ i.i.d. ${\rm Gamma}(m+1,1)$ random variables.
The constant
\begin{equation}\label{an}
\gaa_n:=L +  m\log L -\log m!
\end{equation}
is a well known $o(1)$ approximation to the upper $1/n$-quantile of
 Gamma$(m+1,1)$, see  \cite[p.~156]{Mikosch}.
With this centering,
we have for any fixed $s\in\bbR$,
\begin{align}\label{mSt}
  P_{m}(\gaa_n+s) \sim \frac{L^{m}}{m!}e^{-\gaa_n-s}=\frac{1}{n}e^{-s}
\end{align}
and thus $\Binomial\bigpar{n,P_{m}(\gaa_n+s)}
\stackrel{\rm d}{\to}
\Poisson(e^{-s})$.
Hence,
\begin{align}
\hskip2em&\hskip-2em
  \P[\tau_n\le \gaa_n+s]=\P\left[\text{Binomial}(n,P_{m}(\gaa_n+s))\le\ell\right]
\to P_\ell(e^{-s}),
\end{align}
which means that
we have weak convergence
\begin{align}\label{taulim}
\tau_n-\gaa_n
\stackrel{\rm d}{\to}
\tau
\end{align}
to a random variable
$\tau$ with $\P[\tau\le s]=P_\ell(e^{-s})$
and thus by \eqref{sa2} $\tau\eqd\xi_{\ell+1}$ in the notation there;
in other words $\tau$ has
${\rm Gumbel}(\ell+1)$   distribution
with density  \eqref{sa3}, that is
\begin{equation}\label{ell-Gum}
{\mathbb P}[\tau\in {\rm d}s]=e^{-s} p_\ell(e^{-s})
=\frac{1}{\ell!}\exp\bigpar{-(\ell+1)s-e^{-s}}
,\qquad
s\in{\mathbb R}.
\end{equation}
For $\ell=0$ the  limit distribution of $\tau_n$ is  standard Gumbel, which is a well known result
in the context  of   CCP and the dixie cup problems
 \cite{Ilienko}.
Comparing with Section \ref{S3.1}, we see  that $\tau$ can be realised as the $(\ell+1)$st largest atom of the exponential Poisson process $\Xi$.

Identifying a  proper  norming for the integrand (hence  $M_n$) is a much more delicate matter, requiring
a bivariate approximation of the Poisson probabilities.
 To that end, we introduce
\begin{equation}\label{bt}
\tb_n:=eL+ \left((e-1)m-\frac{1}{2}\right)\log L - \log\left( (e-1)m!^{e-1}\sqrt{2\pi e}\right),
\end{equation}
where the choice of the constant term is the matter of convenience. Set further
\begin{equation}\label{bc}
b_n:=\lrfloor{\tb_n},
\qquad
  c_n:=\{\tb_n\}
\end{equation}
so $\tb_n=b_n+ c_n$ is the decomposition in integer and fractional parts.
The next lemma is our main technical tool,
giving  asymptotics of Poisson probabilities  in a vicinity of $\gaa_n$ and
$b_n$.
For the time being we may
ignore the connection $L=\log n$ and just treat $L$
as a large parameter.

\begin{lemma} \label{LSt} Let $t=t(L,u)\geq 0$ and  integer $r=r(L,v){\geq0}$ for large enough $L$ be given by
\begin{eqnarray}
t&:=&L +  m\log L + u,\\
r&:=&eL+ \left((e-1)m-\frac{1}{2}\right)\log L+v.
\end{eqnarray}
Then, as $L\to\infty$,
\begin{align}
 \overline{P}_r(t)&\sim \frac{1}{e-1}p_r(t),
\label{logP1}\\\label{logP2}
\log p_r(t)&=
\\\notag&\hskip-3em
-L + (e-1)u-v -   \log\sqrt{2\pi e} -\frac{1}{2}\log\left( 1+\frac{v}{eL}\right) - \frac{(eu-v)^2}{2(eL+v)}+O\left(\frac{\log L} {L^{1/2}} \right)
\end{align}
uniformly in $u,v$ within the range
\begin{eqnarray}
\frac{u}{L}+1&\ge&\varepsilon,\label{sec-a1}\\
|eu-v|&=&O(L^{1/2})
\label{sec-a2}
\end{eqnarray}
 for any fixed $\varepsilon>0$.
\end{lemma}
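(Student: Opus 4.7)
For (\ref{logP1}), the plan is to apply the ratio asymptotic (\ref{pPas}) directly. Under (\ref{sec-a1})--(\ref{sec-a2}), the quantities $t = L + m\log L + u$ and $r = eL + ((e-1)m - \tfrac12)\log L + v$ satisfy $r = et + O(L^{1/2} + \log L)$ (since $v-eu=O(L^{1/2})$), hence $t/r \to 1/e$ uniformly in the stated range; (\ref{pPas}) then delivers $\overline P_r(t) \sim (1/(e-1))\, p_r(t)$.

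For (\ref{logP2}), I would begin from $\log p_r(t) = -t + r\log t - \log r!$ and apply Stirling's formula $\log r! = r \log r - r + \tfrac12 \log(2\pi r) + O(1/r)$ to rewrite this as $\log p_r(t) = t\,h(r/t) - \tfrac12 \log(2\pi r) + O(1/r)$ with $h(\rho) := \rho - 1 - \rho \log \rho$. Since $\rho := r/t \to e$ in our regime, I would then Taylor-expand $h$ about $\rho = e$. From $h(e) = -1$, $h'(e) = -1$, $h''(e) = -1/e$, together with $t(\rho-e) = r - et$ and $t(\rho-e)^2 = (r-et)^2/t$, one obtains $t\,h(\rho) = -t - (r-et) - (r-et)^2/(2et) + O((r-et)^3/t^2)$.

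The algebraic core is the explicit identity $r - et = (v - eu) - (m + \tfrac12)\log L$, from which $-t - (r-et) = -L + \tfrac12 \log L + (e-1)u - v$. Combined with $-\tfrac12 \log(2\pi r) = -\log\sqrt{2\pi e} - \tfrac12 \log L - \tfrac12 \log(1 + v/(eL)) + O(\log L / L)$, the residual $\pm\tfrac12\log L$ terms cancel to produce the linear skeleton of (\ref{logP2}). To match the displayed quadratic term $(eu-v)^2/(2(eL+v))$, I would expand $(r-et)^2 = (eu-v)^2 + O((\log L) L^{1/2})$ and replace $2et$ by $2(eL+v)$ in the denominator, at a cost $O(L^{-1/2})$ controlled by $et - (eL+v) = em\log L + (eu-v) = O(L^{1/2} + \log L)$.

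The main obstacle is keeping all error terms uniform across the two-parameter range, where $u$ and $v$ may themselves reach order $L$ and are only coupled by (\ref{sec-a2}). Condition (\ref{sec-a1}) ensures $t, r \asymp L$ and keeps $1 + v/(eL)$ bounded away from zero (so the logarithm is harmless), while (\ref{sec-a2}) pins $r - et$ to $O(L^{1/2})$. Together these render the Stirling remainder $O(1/L)$, the cubic Taylor remainder $O((r-et)^3/t^2) = O(L^{-1/2})$, and the residual cross terms from the quadratic substitution of size $O(\log L / L^{1/2})$, all absorbed into the stated error bound.
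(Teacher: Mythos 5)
Your proposal is correct and takes essentially the same route as the paper's proof: \eqref{logP1} follows from \eqref{pPas} once $t/r\to 1/e$ uniformly, and \eqref{logP2} from Stirling's formula together with a second-order expansion of the logarithmic term --- your Taylor expansion of $h(\rho)=\rho-1-\rho\log\rho$ about $\rho=e$ is the paper's expansion of $r\log\left(et/r\right)$ in different packaging, resting on the same identity $et-r=eu-v+(m+\tfrac12)\log L$ and the same error bookkeeping. The only nitpick is that \eqref{sec-a1} does not give $t,r\asymp L$ (there is no upper bound on $u$), only the lower bounds $t,r\gtrsim L$, but these are all your error estimates actually use, so nothing breaks.
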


\begin{proof}
It follows readily from the assumptions \eqref{sec-a1}--\eqref{sec-a2} that $t,r\to\infty$ with
 $L=O(t\wedge r)$  and  $t/r\to 1/e$.
This
gives the asymptotics in (\ref{logP1}) by the virtue of (\ref{pPas}).

For \eqref{logP2},    Stirling's  formula yields
\begin{equation}\label{StiFo}
\log p_r(t) =-t+ r\log\left( \frac{et}{r}\right)-\log
\sqrt{2\pi}-\frac{1}{2}\log r
+O\left({r}^{-1}\right).
\end{equation}
To work out the second term of (\ref{StiFo}), write $et-r=eu-v+(m+1/2)\log L$ and note that for large $L$
the assumptions
\eqref{sec-a1}--\eqref{sec-a2} further imply   $(et-r)/r=O(L^{-1/2})$ along with
\begin{align}
e L+v>e L+   eu+O(L^{1/2})>\varepsilon  e L + O(L^{1/2}) >\varepsilon L.
\end{align}
With all this in hand we calculate by expanding the logarithm
\begin{align}
r\log\left( \frac{et}{r}\right)&
=et-r-\frac{(et-r)^2}{2r}+O\left(L^{-1/2}\right)
\\\nonumber
&=
eu-v+(m+1/2)\log L -\frac{(eu-v)^2}{2(eL+v)}+O\left( L^{-1/2}\log L\right).
\end{align}
Another logarithm expansion gives
\begin{align}
\log r=\log(eL)+\log\left(1+\frac{v}{eL} \right)+O(L^{-1}\log L).
\end{align}
Plugging the last two formulas in (\ref{StiFo}),  we obtain (\ref{logP2})   by careful bookkeeping.
\end{proof}

Applying (\ref{logP1}) with $L=\log n$ results in the tail asymptotics
\begin{eqnarray}\label{Tas}
\overline{P}_{b_n+k}(\gaa_n+s)&\sim&  n^{-1} e^{{(e-1)s+c_n -k}},
\end{eqnarray}
 uniformly in $s$ and $k$ satisfying
$s\ge-(1-\eps)L$,
and $|es-k|=o(L^{1/2})$,
 and $k=o(L)$,
which hold in particular if  $s,k$ assume values in a bounded range.
For times $t=\gaa_n+s$
in  (\ref{distrM})  we  have   $P_{m}(t)\to 0$ locally uniformly in $s$.
Making use of  (\ref{Tas})  we  approximate the  integrand to arrive at the following theorem.

\begin{theorem}\label{TI}
For every fixed $k\in {\mathbb Z}$, as $n\to\infty$,
\begin{align}\label{IvTh}
{\mathbb P}[M_n-b_n\leq k]&=\int_{-\infty}^\infty p_\ell(e^{-s}) e^{-s} \exp\left(- e^{(e-1)s+c_n-k}\right)
{\rm d}s+o(1)
\\\nonumber
&= \frac{1}{\ell!}
\int_{-\infty}^\infty\exp\left(- e^{-s} -(\ell+1)s- e^{(e-1)s+c_n-k}\right)
{\rm d}s+o(1).
\end{align}
\nopf
\end{theorem}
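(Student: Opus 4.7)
The plan is to rewrite \eqref{distrM} as an expectation against the distribution of $\tau_n - \alpha_n$, and then pass to the limit using the weak convergence of $\tau_n-\alpha_n$ together with a uniform approximation of the integrand on bounded intervals. Substituting $t = \alpha_n + s$ in \eqref{distrM} turns it into
\begin{align*}
\P[M_n \leq b_n + k] = \E\bigsqpar{g_n(\tau_n - \alpha_n)},
\qquad
g_n(s) := \Bigpar{1 - \frac{\overline{P}_{b_n+k}(\alpha_n+s)}{\overline{P}_m(\alpha_n+s)}}^{n-\ell-1},
\end{align*}
so that by \eqref{taulim} and \eqref{ell-Gum} it suffices to evaluate $\E[g_n(\tau_n-\alpha_n)]$ in the limit, recalling that $\tau_n - \alpha_n \dto \tau$ with $\tau$ of density $p_\ell(e^{-s})e^{-s}$.

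For the pointwise asymptotics of $g_n(s)$ I would combine \eqref{Tas} with the trivial observation that $\overline{P}_m(\alpha_n+s) \to 1$ for bounded $s$, since $m$ is fixed and $\alpha_n + s \to \infty$. Together these yield, uniformly on any fixed interval $[-A,A]$,
\begin{align*}
(n-\ell-1)\frac{\overline{P}_{b_n+k}(\alpha_n+s)}{\overline{P}_m(\alpha_n+s)} = e^{(e-1)s + c_n - k}\etto,
\end{align*}
and hence $g_n(s) = h_n(s) + o(1)$ uniformly in $s \in [-A, A]$, where $h_n(s) := \exp(-e^{(e-1)s + c_n - k})$ and the $o(1)$ is independent of the bounded parameter $c_n \in [0,1)$.

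To finish, fix $\varepsilon > 0$ and pick $A$ with $\P[\tau \notin [-A,A]] < \varepsilon$; by tightness $\P[\tau_n - \alpha_n \notin [-A,A]] < 2\varepsilon$ for large $n$, so
\begin{align*}
\E\bigsqpar{g_n(\tau_n - \alpha_n)} = \E\bigsqpar{h_n(\tau_n - \alpha_n) \indic{\tau_n - \alpha_n \in [-A,A]}} + O(\varepsilon) + o(1).
\end{align*}
Because the family $\set{h_n}$ is uniformly bounded and equicontinuous on $[-A,A]$, a standard subsequence argument (along any subsequence extract $c_{n'} \to c$, use uniform convergence $h_{n'} \to h_c$ on $[-A,A]$ and weak convergence $\tau_{n'}-\alpha_{n'} \dto \tau$) replaces the right-hand side by $\E[h_n(\tau)] + O(\varepsilon) + o(1)$; letting $\varepsilon \to 0$ gives the first equality in \eqref{IvTh}, while the second is just the explicit form \eqref{ell-Gum} of the Gumbel$(\ell+1)$ density. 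The main obstacle is exactly the non-convergence of the centering $c_n$: the target formula still depends on $n$, so one cannot invoke weak convergence against a fixed continuous test function and must instead exploit that $c_n$ stays in the compact $[0,1)$ and that $h_n$ is a continuous function of $c_n$, so that a subsequence extraction together with the uniform estimate $g_n - h_n \to 0$ on compacts closes the argument.
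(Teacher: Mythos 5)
Your proposal is correct and follows essentially the same route as the paper: it starts from the mixture formula \eqref{distrM}, uses the Gumbel$(\ell+1)$ limit \eqref{taulim} of $\tau_n-\gaa_n$ and the uniform tail asymptotics \eqref{Tas} (from \refL{LSt}, together with $P_m(\gaa_n+s)\to0$) to replace the integrand by $\exp(-e^{(e-1)s+c_n-k})$, exactly as the paper indicates before stating the theorem. The truncation to a compact interval via tightness and the subsequence extraction over $c_n\in[0,1)$ that you add are the same devices the paper formalises in \refL{LeLem2}, so you have simply written out the details the paper leaves to the reader.
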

A peculiar  feature of this result
(which is a common
phenomenon for asymptotics of discrete random variables that do not require
scaling)
is that the distribution defined by
(\ref{IvTh}) has, asymptotically, periodic fluctuations on the $\log n$
scale.
Thus, weak convergence of the centered stopped maximum only holds along
certain subsequences $(n_j)$ and
there are different possible limit distributions;
more precisely, we see from \eqref{IvTh} (or \eqref{dist-Z} below) that
convergence in distribution holds for any subsequence such that $c_n$ converges.
In  \cite{Ivch1}, equation  (\ref{distrM})  was manipulated using the change
of variable $z=nP_{m-1}(t)$, which lead
to a representation    equivalent to (\ref{IvTh}) via  the substitution
$z=e^{-s}$.

The following equivalent reformulation of
 Theorem \ref{TI}
in terms of  the total variation distance is
 new to our knowledge.
\begin{theorem}\label{TI2}
Let $b_n$ and $c_n$ be given by \eqref{bt}--\eqref{bc}.
Then
\begin{align}\label{ti2}
  \lim_{n\to\infty}d_{\rm  TV}(M_n-b_n,Z_n)=0
\end{align}
with
\begin{equation}\label{dist-Z}
Z_n: =\lceil \xi_1 + (e-1)\tau+c_n\rceil,
\end{equation}
where $\xi_1$ is standard Gumbel-distributed and independent of
$\tau\stackrel{\rm d}{=}{\rm Gumbel}(\ell+1)$ given by \eqref{ell-Gum}.
\end{theorem}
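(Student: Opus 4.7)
The plan is to show that the integral appearing in \eqref{IvTh} is exactly the cumulative distribution function of $Z_n$, whence Theorem~\ref{TI} translates into pointwise convergence of the probability mass functions of $M_n-b_n$ and $Z_n$; I then upgrade this pointwise statement to total variation convergence by Scheff\'e's lemma combined with a straightforward tightness bound.

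For the first step, since $\xi_1$ and $\tau$ are independent with $\P[\xi_1\le x]=e^{-e^{-x}}$ by \eqref{gumbel} and with $\tau$ having density $e^{-s}p_\ell(e^{-s})$ by \eqref{ell-Gum}, and since $\lceil y\rceil\le k$ is equivalent to $y\le k$ for $k\in\bbZ$, conditioning on $\tau$ yields
\begin{align*}
\P[Z_n\le k]&=\P\bigsqpar{\xi_1+(e-1)\tau+c_n\le k}\\
&=\int_{-\infty}^\infty\exp\bigpar{-e^{-k+c_n+(e-1)s}}\,e^{-s}p_\ell(e^{-s})\dd s,
\end{align*}
which coincides with the right-hand side of \eqref{IvTh}. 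Hence Theorem~\ref{TI} gives $\P[M_n-b_n\le k]=\P[Z_n\le k]+o(1)$ for every fixed $k\in\bbZ$, and differencing in $k$ produces the pointwise convergence of mass functions $\P[M_n-b_n=k]-\P[Z_n=k]\to 0$ for each $k\in\bbZ$.

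To convert this pointwise convergence into $L^1$ convergence (i.e., into total variation), I would verify tightness of both families. Since $c_n\in\oi$, the sandwich $\lceil \xi_1+(e-1)\tau\rceil\le Z_n\le\lceil \xi_1+(e-1)\tau\rceil+1$ shows that $\set{Z_n}$ is stochastically bounded by an a.s.\ finite random variable, hence tight; tightness of $\set{M_n-b_n}$ then follows by applying the relation $\P[M_n-b_n\le k]=\P[Z_n\le k]+o(1)$ at suitably large $\pm k$. With both families tight, Scheff\'e's lemma for probability mass functions on $\bbZ$ gives
\begin{align*}
2\,d_{\rm TV}(M_n-b_n,Z_n)=\sum_{k\in\bbZ}\bigabs{\P[M_n-b_n=k]-\P[Z_n=k]}\longrightarrow 0,
\end{align*}
which is precisely \eqref{ti2}.

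The only conceptual subtlety is that the law of $Z_n$ itself does not converge as $n\to\infty$, since $c_n$ keeps oscillating in $\oi$; this is exactly the reason for phrasing the theorem against a moving target rather than a fixed weak limit. Once the two CDF formulas are matched, the tightness/Scheff\'e combination is robust enough to handle the oscillation, and no further work is needed.
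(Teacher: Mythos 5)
Your proposal is correct and follows essentially the paper's own route: the decisive step in both is the identification of the integral in Theorem~\ref{TI} with $\P[Z_n\le k]$ by conditioning on $\tau$ and using the Gumbel distribution function, which yields $\P[M_n-b_n\le k]=\P[Z_n\le k]+o(1)$ for each fixed $k\in\bbZ$. The only difference lies in the final upgrade to total variation: the paper invokes \cite[Lemma 4.1]{SJ175} (equivalently Lemma~\ref{LeLem2}), using boundedness of $(c_n)$, whereas you establish the same upgrade directly through tightness of both families together with a truncation (Scheff\'e-type) bound on the $\ell^1$-distance of the mass functions, which is a valid and self-contained substitute.
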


Note that the distribution of $Z_n$ depends on $n$ through the constant
$0\leq c_n\leq 1$, which is the source of the oscillations.
Recall also that the variables $M_n$ depend on the fixed parameters
$m\ge0$ and $\ell\ge0$;
in \refT{TI2}, $b_n$ and $c_n$ depend on $m$, and $\tau$
depends on $\ell$.
In the special case $\ell=0$ (CCP and dixie cup problem), $\xi_1$ and $\tau$ are
two independent standard Gumbel variables.

\begin{proof}
   Using \eqref{ell-Gum} and \eqref{gumbel}, we can write the first integral
   in \eqref{IvTh} as
   \begin{align}\label{no1}
\hskip2em&\hskip-2em
     \intoooo\exp\bigpar{-e^{(e-1)s+c_n-k}}\P\left[\tau\in\ddx s\right]
= \intoooo\P\left[\xi_1 \le -(e-1)s-c_n+k\right]
\P\left[\tau\in\ddx s\right]
\\\notag&
=\P\left[\xi_1 \le -(e-1)\tau-c_n+k\right]
=\P\left[\xi_1 +(e-1)\tau+c_n\le k\right]
\\\notag&
=\P\left[ Z_n\le k\right],
   \end{align}
using also \eqref{dist-Z}. Hence \eqref{IvTh} yields
\begin{align}\label{no2}
\P[M_n-b_n\le k] = \P[Z_n\le k]+o(1), \qquad\text{for every }k\in\bbZ.
\end{align}
It is easy to see, using that the sequence $(c_n)$ is bounded,
that \eqref{no2} is equivalent to \eqref{ti2},
see  \cite[Lemma 4.1]{SJ175} (or Lemma \ref{LeLem2} in what follows).
\end{proof}

We have stated \refT{TI2} with (roughly) centered variables $M_n-b_n$.
The result can also be stated without centering as follows.

\begin{theorem}\label{TI3}
  Let\/ $\tb_n$ be given by \eqref{bt}.
Then
\begin{align}\label{ti3}
  \lim_{n\to\infty}d_{\rm  TV}(M_n,\lceil \xi_1+ (e-1)\tau+\tb_n\rceil)=0
,\end{align}
where $\xi_1$ is standard Gumbel-distributed and independent of
$\tau\stackrel{\rm d}{=}{\rm Gumbel}(\ell+1)$ given by \eqref{ell-Gum}.
\end{theorem}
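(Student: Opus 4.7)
\textbf{Proof proposal for \refT{TI3}.} The plan is to deduce \refT{TI3} directly from \refT{TI2} by observing that the two random variables in question differ almost surely by the fixed integer $b_n$, and that total variation distance is invariant under common integer shifts.

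The key step is the pathwise identity. Since $b_n = \lfloor \tb_n\rfloor \in \bbZ$ and $\tb_n = b_n + c_n$, for any real number $y$ we have $\lceil y + \tb_n\rceil = b_n + \lceil y + c_n\rceil$ because the ceiling function commutes with integer translation. Applied with $y = \xi_1 + (e-1)\tau$, this gives, almost surely,
\begin{align*}
\lceil \xi_1 + (e-1)\tau + \tb_n \rceil = b_n + \lceil \xi_1 + (e-1)\tau + c_n \rceil = b_n + Z_n,
\end{align*}
where $Z_n$ is the random variable from \eqref{dist-Z}.

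Next, recall that total variation distance is invariant under joint translation by a deterministic constant: for any random variables $X, Y$ and any $c \in \bbR$, $d_{\rm TV}(X, Y) = d_{\rm TV}(X - c, Y - c)$ (this follows immediately from the coupling characterisation, since translations are bijections). Applying this with $c = b_n$ and using the identity above,
\begin{align*}
d_{\rm TV}\bigpar{M_n, \lceil \xi_1 + (e-1)\tau + \tb_n \rceil} = d_{\rm TV}(M_n, b_n + Z_n) = d_{\rm TV}(M_n - b_n, Z_n).
\end{align*}
By \refT{TI2} the right-hand side tends to $0$ as $n \to \infty$, establishing \eqref{ti3}.

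There is no real obstacle here: the statement is merely a rewriting of \refT{TI2} that absorbs the integer part $b_n$ into the centering constant, with the fractional part $c_n$ retained inside the ceiling function as the source of the logarithmic-scale oscillations. The only thing to verify carefully is the elementary identity $\lceil y + b_n + c_n\rceil = b_n + \lceil y + c_n\rceil$ for integer $b_n$, which is immediate.
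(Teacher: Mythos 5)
Your proposal is correct and follows essentially the same route as the paper: the paper also deduces \eqref{ti3} from \refT{TI2} via the shift-invariance $d_{\rm TV}(M_n-b_n,Z_n)=d_{\rm TV}(M_n,Z_n+b_n)$ together with the identity $Z_n+b_n=\lceil \xi_1+(e-1)\tau+c_n+b_n\rceil=\lceil \xi_1+(e-1)\tau+\tb_n\rceil$, valid since $b_n\in\bbZ$. No gaps.
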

\begin{proof}
We have $d_{\rm  TV}(M_n-b_n,Z_n)=d_{\rm  TV}(M_n,Z_n+b_n)$,
and since $b_n$ is an integer,
\begin{equation}\label{ww1}
Z_n+b_n =\bigceil{\xi_1 + (e-1)\tau+c_n+b_n}
=\bigceil{\xi_1 + (e-1)\tau+\tb_n},
\end{equation}
using \eqref{bc}.
Thus \eqref{ti3}
is  an immediate consequence of \refT{TI2}.
\end{proof}

\section{Maximal occupancy counts  at fixed and random times}

\noindent
For the classic occupancy scheme
the multivariate asymptotics of order statistics  were explored in \cite{Ivch0, VS} by
the method of moments.
The   approach in \cite{Kolchin} relies on poissonisation and a conditioning  relation
connecting to the multinomial distribution.
This work revealed that the maximum occupancy count may exhibit different asymptotic  behaviours depending
  on  how the number of balls compares to $n$.
Parallel studies  in the extreme-value theory  confirmed that
 the maximum of $n$ Poisson variables with fixed parameter
 is asymptotically degenerate  \cite{Anderson70} and later
focused on an instance of the triangular  scheme, where
a continuous Gumbel limit exists \cite{Anderson}.
In this section we contribute to the past development by treating the occupancy scheme in the  framework of point processes and introducing a simple tool to deal with  dependence
among the multiplicities.

As in the previous section, we deal with the time range $t=\gaa_n+  O(1)$  (corresponding to the right $m$-domain recalled in Section \ref{StopT}), that is
$t=\gaa_n+s$, where
$\gaa_n$ is defined by \eqref{an} and
$s$ is allowed to vary in a large but bounded range.
With the norming
suggested by Lemma \ref{LSt}, we introduce a lattice point process ${\rm M}_n^s$ with $n$ atoms
\begin{align}
M_{n,i}(\gaa_n+s)-b_n,  ~i\in [n],
\end{align}
each corresponding to some box occupancy count $\Pi_j(\gaa_n+s)$, $j\in [n]$.
Representing via multiplicities,
\begin{align}
  {\rm M}_n^s:=\sum_{k=-\infty}^\infty  \mu_{n,b_n+k}(\gaa_n+s)\delta_k.
\end{align}

Intuitively,  the point process ${\rm M}_n^s$ captures a few maximal
occupancy counts, which are rare among the $n$ boxes for the temporal regime
 in focus.
An  obstacle on the way of approximating
${\rm M}_n^s$ for large $n$  is the dependence among the multiplicities, which persists in the poissonised scheme
through  the identity $\sum_r  \mu_{n,r}(t)=n$.
 Our
strategy to circumvent the dependence makes use of replacing the fixed
number of boxes with a random number
to pass to a Poisson point process $\widehat{\rm M}_n^s$, then compare $\widehat{\rm M}_n^s$ with a suitable exponential  Poisson process $\Xi_{b}$.

\subsection{Bi-poissonisation}  We randomise the poissonised occupancy scheme
by introducing    an auxiliary unit-rate Poisson point process ${\rm N}=\{\theta_i\}$
with atoms $\theta_1<\theta_2<\ldots$, independent
of $\Pi_1,\Pi_2,\ldots$.
We associate with $\theta_i$ the $i$th box, and treat  the whole arrival process  $\Pi_i$ to this box as a random mark attached to $\theta_i$.
Geometrically, the now bivariate point data can be plotted in the positive quadrant of the $(\theta, t)$-plane, by first erecting  a vertical  line  at each site  $\theta_i$ of the $\theta$-axis, then
populating  line $i$ with    the atoms of $\Pi_i$.
(The resulting planar point process has the intensity measure ${\rm d}\theta{\rm d}t$  but is not Poisson.)
In this picture, the bi-poissonised
occupancy scheme with size parameter $n$ corresponds to the configuration of atoms in a vertical strip  of width $n$.

For shorthand we write $N={\rm N}[0,n]$ whenever $n\geq 0$ appears in formulas as fixed parameter (which need not be integer).
In the bi-poissonised occupancy scheme the box occupancy counts at time
  $t\geq0$ are
\begin{align}
\Pi_1(t),\ldots,\Pi_{N}(t),
\end{align}
where the number of boxes  is random,
 $N\stackrel{\rm d}{=}{\rm Poisson}(n)$; in particular  with probability $e^{-n}$ the number of boxes  is zero.
For varying $n$ the models are consistent, so that  for  $n'<n$ the model with a smaller number of boxes is obtained from the larger  model by discarding  some number of boxes ${\rm N}(n',n]$.
The number
 of balls allocated by time $t$, equal to $\sum_{i=1}^{N} \Pi_i(t)$, has  a
 compound Poisson distribution with p.g.f.\
$z\mapsto \exp\left(n (e^{t(z-1)}-1)\right)$.

 We denote by $\widehat{M}_{n,i}(t), \widehat{\mu}_{n,r}(t),\widehat{\tau}_n$  the bi-poissonised counterparts of the fixed-$n$ random variables
and set them
equal to zero  in the event   $N$ takes a small value and they are not defined in a natural way.
For instance, in the event $N<\ell+1$ we set
$\widehat{\tau}_n=0$ for the stopping time which, as  before,
 terminates the allocation process as soon as the number of boxes with at most $m$ balls  becomes $\ell$.
Such conventions do not impact the envisaged distributional asymptotics.

The technical benefit of poissonising the number of boxes relates to the marking theorem, which
for many applications can be used
  in the following transparent form. Let ${\rm P}_1,\ldots, {\rm P}_k$    ($1\leq k\leq\infty$) be a collection of mutually exclusive properties
of a countable subset of $[0,\infty]$, such that a unit-rate Poisson point process possesses one of them almost surely.
Let ${\rm N}_j$ be the set of atoms $\theta_i$ whose $\Pi_i$ satisfies ${\rm P}_j$. Then ${\rm N}_1,\ldots, {\rm N}_k$ are independent Poisson processes.

This applied, immediately gives that the bi-poissonised multiplicities  are independent in $r$ (for fixed $n,t$) and
satisfy
\begin{equation}\label{mu-t-P}
\widehat{\mu}_{n,r}(t)\stackrel{\rm d}{=}{\rm Poisson}(n p_r(t)), ~~~r\in{\mathbb Z}_{\geq0}.
\end{equation}

A further consequence concerns independence in the time domain. For $r\geq 1$ define
{\it the $r$th arrival} point process
$\widehat{\rm R}_{n,r}$  to be the set
of $r$th atoms of ${\Pi}_i$'s  for $i\leq N$. That is, instant $t$ accomodates an atom of $\widehat{\rm R}_{n,r}$ if  one of $N$ boxes receives its $r$th ball at time $t$.
(In the occupancy problems these times are
sometimes called $r$-records, see \cite{DGM} and references therein.)

\begin{proposition}\label{records} Each point process $\widehat{\rm R}_{n,r}, r\in {\mathbb Z}_{>0},$ is  Poisson, with the  intensity measure
\begin{align}
{\mathbb E}[\widehat{\rm R}_{n,r}({\rm d}t)]=np_{r-1}(t){\rm d}t, ~t\geq0,
\end{align}
or, equivalently,
\begin{align}
{\mathbb E}[\widehat{\rm R}_{n,r}[t,\infty]]=n {P}_{r-1}(t), ~t\geq0.
\end{align}
\end{proposition}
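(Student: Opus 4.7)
The plan is to view $\widehat{\rm R}_{n,r}$ as the image of a two-dimensional Poisson point process under a measurable map, and then invoke the mapping/marking theorem for Poisson processes. Concretely, I would start by recording that each counting process $\Pi_i$ is, in distribution, an element of a Polish path space (say, nondecreasing integer-valued right-continuous functions), and that the $\Pi_i$'s are i.i.d.\ and independent of ${\rm N}=\{\theta_i\}$. Therefore, by the marking theorem, the collection $\{(\theta_i,\Pi_i)\}$ is a Poisson point process on $[0,\infty)\times\Omega_{\rm path}$ with intensity measure $\ddx\theta\otimes \cL(\Pi)$, where $\cL(\Pi)$ is the law of a unit-rate Poisson process.

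Next, I would restrict the $\theta$-coordinate to $[0,n]$ (which yields again a Poisson point process on the restricted product space), and apply the map $(\theta,\pi)\mapsto \sigma_r(\pi)$, where $\sigma_r(\pi)$ denotes the time of the $r$th atom of the path $\pi$. Because this map is measurable, the image process — which is precisely $\widehat{\rm R}_{n,r}$ — is a Poisson point process on $[0,\infty)$ by the mapping theorem. Its intensity measure is the pushforward of $\ddx\theta\otimes\cL(\Pi)$ restricted to $\theta\in[0,n]$, that is
\begin{align*}
\E[\widehat{\rm R}_{n,r}(A)]=n\,\P[\sigma_r(\Pi)\in A]
\end{align*}
for every Borel $A\subseteq [0,\infty)$.

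The final ingredient is that, for a unit-rate Poisson process, the $r$th atom has distribution ${\rm Gamma}(r,1)$ with density $p_{r-1}(t)=e^{-t}t^{r-1}/(r-1)!$; this is \eqref{sa1} shifted by one index. Substituting gives $\E[\widehat{\rm R}_{n,r}(\dd t)]=n p_{r-1}(t)\,\dd t$, as claimed. The equivalent cumulative form $\E[\widehat{\rm R}_{n,r}[t,\infty]]=n P_{r-1}(t)$ then drops out immediately from \eqref{ingamma}.

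I do not expect any genuine obstacle: the statement is essentially a direct consequence of the marking/mapping theorem together with the known distribution of the $r$th Poisson arrival. The only point that deserves a careful word is the a.s.\ definedness of $\sigma_r(\Pi)$ (true because each $\Pi_i$ has infinitely many atoms a.s.) and the fact that restricting $\theta$ to $[0,n]$ preserves the Poisson property — both standard.
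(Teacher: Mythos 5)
Your argument is correct and is essentially the paper's own proof: the paper also marks each atom $\theta_i$ of ${\rm N}$ (there directly with the $r$th arrival time $\eta_i$ of $\Pi_i$, rather than with the whole path followed by the map $\pi\mapsto\sigma_r(\pi)$), notes that the pairs form a bivariate Poisson process by the marking theorem, and projects the points with $\theta_i\le n$, with the intensity then read off from the ${\rm Gamma}(r,1)$ law of the $r$th arrival exactly as you do. The extra care you take (measurability of $\sigma_r$, restriction to $\theta\in[0,n]$) is fine but adds nothing beyond what the paper leaves implicit.
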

\begin{proof} For $\theta_i$ an atom of $\rm N$ and  $\eta_i$ the $r$th arrival in $\Pi_i$, we have that the pairs $(\theta_i,\eta_i)$ comprise
a bivariate Poisson process. Projecting these for $i\leq N$ yields $ \widehat{\rm R}_{n,r}$, which is thus Poisson.
\end{proof}
\noindent
The proposition bears  some similarity with
 the $r$-records (rank $r$ observations) in the sense of extreme-value theory \cite[Section 4.6]{Resnick}. The major difference, however, is that  the  $\widehat{\rm R}_{n,r}$'s are not independent for different values of $r$.

The bi-poissonised stopping time  $\widehat{\tau}_n$ identifies as  the $(\ell+1)$st rightmost atom of  $\widehat{\rm R}_{n,m+1}$, hence Proposition \ref{records} yields an explicit  formula for the distribution
\begin{align}
\P[\widehat{\tau}_n\leq t]=P_\ell(nP_m(t)),
\end{align}
analogous to (\ref{taund}).
This can be manipulated to show
 that
$\widehat{\tau}_n-\gaa_n\stackrel{\rm d}{\to}\tau$, where $\tau\stackrel{\rm d}{=}{\rm
  Gumbel}(\ell+1)$
as in the fixed-$n$ model. (Alternatively, condition on $N$ to prove $\ttau_n-\alpha_{N}\dto \tau$
and use $|\alpha_{N}-\gaa_n|=o_p(1)$.)
This becomes most elementary  in the CCP
case $\ell=0, m=0$, where the centering constant is  $\gaa_n=L$ and  the distribution function coincides {\it exactly} with the standard Gumbel distribution on $(-L,\infty)$:
\begin{equation}\label{GumCCP}
{\mathbb P}[\widehat{\tau}_n-L\leq s]=
\exp(-ne^{-s-L})=e^{-e^{-s}},
~~~s\geq -L,
\end{equation}
which entails that the value
$\widehat{\tau}_n=0$
has positive probability $e^{-n}$.

\subsection{Approximating ${\rm M}_n^s$ by $\widehat{\rm M}_n^s$}\label{BinPoAppr}
By the marking theorem the multiplicities $\widehat{\mu}_{n,r}(t), r\in
{\mathbb Z}_{\geq 0},$ for each fixed $t$ are independent, as said above,
which makes
\begin{align}
\widehat{\rm M}_n^s:=\sum_{k=-\infty}^\infty  \widehat{\mu}_{n,b_n+k}(\gaa_n+s)\delta_k
\end{align}
a lattice Poisson point process, with mean multiplicity
\begin{align}\label{lxx2}
np_{b_n+k}(\gaa_n+s)
\end{align}
(equal to zero for $k<-b_n$ or $s\leq-\gaa_n$) at site $k\in{\mathbb Z}$.
We proceed with  estimating the proximity of ${\rm M}_n^s$ to $\widehat{\rm M}_n^s$ restricted  to a region containing the few largest atoms with high probability.

For $b_n$ as in (\ref{bc}) and $r_n<b_n$ yet to be chosen, consider $b_n-r_n$ as a truncation level for the occupancy counts.  The number of boxes $i\leq n$  with
$\Pi_i(\gaa_n+s)>b_n-r_n$ is a binomial random variable $X$ with parameters $n$ and  $\overline{P}_{b_n-r_n}(\gaa_n+s)$. Likewise the number of boxes $i\leq N(n)$ satisfying this condition is a Poisson random variable $Y$
 with the same mean ${\mathbb E}[X]={\mathbb E}[Y]$. A criterion for  selecting the truncation level  is that
the mean number of overshoots goes to $\infty$ but the overshoot probability for  any particular box $i$ approaches $0$.
Choosing $r_n\sim \alpha L$ with some $0<\alpha<e-1$,  Stirling's formula
gives the asymptotics
\begin{align}\label{sb1}
-\log (\overline{P}_{b_n-r_n}(\gaa_n+s))\sim {\beta}L
\end{align}
with some $0<\beta <1$
depending on $\alpha$.
This entails the desired $n \overline{P}_{b_n-r_n}(\gaa_n+s)\to\infty$  and  $\overline{P}_{b_n-r_n}(\gaa_n+s)\to 0$,
to enable application of   Prohorov's Poisson-binomial
bound
(see e.g.\ \cite[p.~2, and also (1.6) and (1.23)]{PoA} and the references there)
that becomes
\begin{align}
d_{\rm TV}(X,Y)=O(n^{-\beta'}),
\end{align}
for any $\beta'<\beta$,
in fact  locally uniformly in $s$.
Appealing to the identity (\ref{dident}) for MBPP's this translates as
\begin{equation}\label{1appr}
d_{\rm TV}({\rm M}_n^s|_{[-r_n,\infty]}\,,\,            \widehat{\rm M}_n^s|_{[-r_n,\infty]})=O(n^{-\beta'})
\end{equation}
(where $r_n$ grows logarithmically as above).
For fixed positive integer $K$,
the events
\begin{eqnarray}
\{X<K \}&=&\{M_{n,K}(\gaa_n+s)\leq b_n-r_n\}= \{M_{n,K}(\gaa_n+s)-b_n\leq -r_n\},\\
\{Y<K\}&=&\{ \widehat{M}_{n,K}(\gaa_n+s)\leq b_n-r_n\}=\{\widehat{M}_{n,K}(\gaa_n+s)-b_n\leq -r_n\}
\end{eqnarray}
have much smaller probabilities,
also as a consequence of  \eqref{sb1},
and by a coupling argument we can force the  $K$ top atoms of ${\rm M}_n^s$
and $\widehat{\rm M}_n^s$ to coincide.
From this, we obtain that, for any fixed $K$,
\begin{align}\label{va1}
\dtv\bigpar{(M_{n,i}(\gaa_n+s))_{i=1}^K,(\widehat{M}_{n,i}(\gaa_n+s))_{i=1}^K}
=O(n^{-\beta'}).
\end{align}
We remark that it is possible to improve on the above by letting $K$ grow as
a small
power of $n$, this way extending the approximation to cover
some intermediate  order statistics.

\subsection{Coupling by the index of box}
An alternative coupling of  a fixed number of
 maximal occupancy counts in the fixed-$n$ and bi-poissonised models
only uses symmetry and not  distribution of the counts.
For fixed $t>0$ and
$n>K$ there exist $K$ distinct boxes whose ordered occupancy counts are $M_{n,1}(t),\ldots, M_{n,K}(t)$.
If $M_{n,K}>M_{n,K+1}$   the set of such boxes is uniquely determined, otherwise we
choose at random a suitable number from the  boxes with $M_{n,K}$ balls.
The labels of thus selected  $K$ boxes is a random sample  from $[n]$; let $I$ be the largest index in the sample.
For $N\stackrel{\rm d}{=}{\rm Poisson}(n)$,
if $N>K$
define $\widehat{I}$ similarly, leaving  the index undefined otherwise.
For $\varepsilon>0$, if the event  $A:=\{I\vee\widehat{I}\leq
n-n\varepsilon<N<n+n\varepsilon\}$ occurs then  we can couple in  such a way that
\begin{equation}\label{PoCo}
(M_{n,i}(t))_{i=1}^K~~~{\rm and}~~~(\widehat{M}_{n,i}(t))_{i=1}^K
\end{equation}
are equal.
Using exchangeability we obtain estimates
(at least if $n\eps$ is an integer)
\begin{align}
  {\mathbb P}[I>n-n\varepsilon]\le K\varepsilon,
\qquad
{\mathbb P}[|N-n|<n\varepsilon ,\widehat{I}>n-n\varepsilon]
\leq \frac{2 K\varepsilon}{1-\varepsilon}.
\end{align}
 Choosing $\varepsilon\sim2\sqrt{L/n}$, these dominate the known tail bound
$
{\mathbb P}[|N-n|\geq n\varepsilon]\leq 2 e^{-n\varepsilon^2/3}
$
(following from Bennet's inequality,
see also  \cite[Corollary 2.3 and Remark 2.6]{JLR});
hence the total variation distance between the vectors  in (\ref{PoCo}) does
not exceed
${\mathbb P}[A^c]<7 K\sqrt{L} n^{-1/2}$ for $n$ not too small.

\subsection{Approximating $\widehat{\rm M}_n^s$ by an exponential process}  By passing to  the Poisson point process $\widehat{\rm M}_n^s$ the approximation problem
is  reduced
to asymptotics of the  intensity measure.
For Poisson random variables $X$ and $Y$,   we have that $d_{\rm
  TV}(X,Y)\leq |{\mathbb  E}[X]-\E[Y]|$.
From this
the total variation distance between lattice Poisson processes does not exceed   the $\ell_1$-distance between their intensity measures, seen as sequences of point masses
at integer locations.
(Stronger bounds exist, as follows from
\cite[Theorem 2.2(i) and its proof]{SJ212}, but we do not need them.)

The intensity measure of ${\rm \Xi}^\uparrow_{b}$ has masses growing exponentially fast in the negative direction. This  forces us to impose a less generous truncation than in Section  \ref{BinPoAppr} to keep the quality of approximation high.

\begin{proposition} For $r_n'\sim \beta \log L$ with any $0<\beta<1/2$,
locally uniformly in $s$,
\begin{equation}\label{2appr}
d_{\rm TV}(\widehat{\rm M}_n^s|_{[-r_n',\infty]}\,,\,  {\rm \Xi}^\uparrow_{(e-1)s+c_n}|_{[-r_n',\infty]})\to\ 0.
\end{equation}
\end{proposition}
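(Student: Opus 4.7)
The plan is to bound the total variation distance by the $\ell_1$-distance of intensity measures, as the paragraph preceding the statement prescribes. Both $\widehat{\rm M}_n^s$ and $\Xi^\uparrow_{(e-1)s+c_n}$ are lattice Poisson processes on $\bbZ$: by \eqref{mu-t-P} and the bi-poissonisation setup, the intensity of $\widehat{\rm M}_n^s$ at site $k$ equals $np_{b_n+k}(\gaa_n+s)$, while by \eqref{lxx1} the intensity of $\Xi^\uparrow_{(e-1)s+c_n}$ at $k$ equals $(e-1)e^{(e-1)s+c_n-k}$. Thus it suffices to show
\begin{align*}
\sum_{k\ge -r_n'}\Bigabs{np_{b_n+k}(\gaa_n+s)-(e-1)e^{(e-1)s+c_n-k}}\longrightarrow 0
\end{align*}
locally uniformly in $s$. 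I would split this sum at a moderate threshold such as $K=\lfloor L^{1/4}\rfloor$ and handle the bulk and tail separately.

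For the bulk $-r_n'\le k\le K$, I would apply \refL{LSt} with $u=s-\log m!$ and $v=k-c_n-\log\bigpar{(e-1)m!^{e-1}\sqrt{2\pi e}}$. The hypotheses \eqref{sec-a1}--\eqref{sec-a2} hold since $s$ stays bounded and $|eu-v|=|es-k|+O(1)=O(L^{1/4})$ in this range. A short calculation shows that the constants in \eqref{bt} are chosen precisely so that the $k$-independent pieces in \eqref{logP2} align and, after multiplying by $n=e^L$,
\begin{align*}
np_{b_n+k}(\gaa_n+s)=(e-1)e^{(e-1)s+c_n-k}\exp\!\Bigpar{O\bigpar{L^{-1/2}\log L}+O(k^2/L)}.
\end{align*}
Summing the resulting absolute errors and using $\sum_{k\ge -r_n'}e^{-k}=O(e^{r_n'})=O(L^\beta)$, the $L^{-1/2}\log L$ contribution is $O(L^{\beta-1/2}\log L)$, which is $o(1)$ precisely because $\beta<1/2$; the quadratic contribution is smaller still.

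For the tail $k>K$, the target intensity sums geometrically to $O(e^{-K})=O(e^{-L^{1/4}})$, while the identity $\sum_{k>K}np_{b_n+k}(\gaa_n+s)=n\overline{P}_{b_n+K}(\gaa_n+s)$ combined with \eqref{Tas} at $k=K$ gives $n\overline{P}_{b_n+K}(\gaa_n+s)\sim e^{(e-1)s+c_n-K}\to 0$. Adding the bulk and tail estimates yields the claim.

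The hard part is the tension between the logarithmic truncation level $r_n'$, which inflates errors by the geometric factor $e^{r_n'}=L^\beta$, and the $O(L^{-1/2}\log L)$ relative error in the bivariate Stirling expansion \eqref{logP2}; this tension is exactly what forces the hypothesis $\beta<1/2$, and coarsening the truncation further would require a sharper bivariate Poisson approximation than \refL{LSt} provides.
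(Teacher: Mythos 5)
Your proposal is correct and follows essentially the same route as the paper: bounding the total variation distance by the $\ell_1$-distance of the lattice intensities, splitting at $L^{1/4}$, applying \refL{LSt} with $u=O(1)$ and $v$ in the truncated range so that the relative error is $O(L^{-1/2}\log L)$, and balancing this against the total mass $\asymp e^{r_n'}\approx L^{\beta}$ with $\beta<1/2$, with the tail beyond $L^{1/4}$ killed by \eqref{Tas} and geometric decay. The only cosmetic difference is that you track the quadratic term as $O(k^2/L)$ separately, whereas the paper absorbs it into the uniform $O(L^{-1/2}\log L)$ bound.
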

\begin{proof} We first restrict both Poisson processes to the interval $(-r_n'\,, L^{1/4})$.  Within this range we approximate the point masses of the intensity measure of  ${\rm \Xi}^\uparrow_{(e-1)s+c_n}$
by their $\widehat{\rm M}_n^s$-counterparts;
recall that these intensities are given by \eqref{lxx1} and \eqref{lxx2}.
To that end, Lemma \ref{LSt} is applied with $u=O(1)$ and $v$ varying in the
range from $-r_n'+O(1)$ to $L^{1/4}+O(1)$.
 A simple calculation, using \eqref{an}, \eqref{bt} and \eqref{bc}, shows that
the pointwise {\it relative} approximation error for the point masses  is then given by the last three terms in (\ref{logP2}), which altogether
are estimated as $cL^{-1/2}\log L$ for some $c>0$, as one easily checks.
Since for the exponential process
\begin{equation}\label{XiTail}
{\mathbb E}\,[{\rm \Xi}^\uparrow_{b}{(-r_n',\infty}]] \asymp e^{r_n'}
\end{equation}
(locally uniformly in $b$), the $\ell_1$-distance between the mean measures on  $(-r_n'\,, L^{1/4})$ is of the order $O(e^{r_n'}L^{-1/2}\log L)$, which is in fact $o(1)$  by our choice of $r_n'$.
By this very token and (\ref{logP1}), for both processes  the total mean
measure of $[L^{1/4},\infty]$ is $O(\exp(-L^{1/4}))$, which makes a negligible contribution to the total variation distance
in (\ref{2appr}).
\end{proof}

Clearly,  (\ref{2appr}) combined with  (\ref{1appr}) yields the desired approximation
\begin{equation}\label{3appr}
d_{\rm TV}({\rm M}_n^s|_{[-r_n',\infty]}\,,\,  {\rm \Xi}^\uparrow_{(e-1)s+c_n}|_{[-r_n',\infty]})\to\ 0.
\end{equation}
Obviously from (\ref{XiTail}), for any fixed $K$, for both processes the $K$ largest atoms exceed $-r_n'$ w.h.p., whence
with the notation of Section \ref{S3} we obtain as a corollary
\begin{theorem}\label{fixedmax}
For every $K\in{\mathbb Z}_{>0}$, as $n\to\infty$ locally uniformly in  $s\in {\mathbb R}$
\begin{equation}\label{ThFM}
d_{\rm TV}\big( (M_{n,i}(\gaa_n+s)-b_n)_{i=1}^K\,,\,(\lceil\xi_i+ (e-1)s+c_n\rceil)_{i=1}^K\big)\to 0,
\end{equation}
where $\xi_i\stackrel{\rm d}{=}{\rm Gumbel}(i)$ is the decreasing sequence of  atoms of\/ $\Xi$.
\end{theorem}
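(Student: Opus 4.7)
The plan is to deduce \refT{fixedmax} from the point-process approximation (\ref{3appr}) by extracting the top $K$ atoms. The key observation is that the $i$-th largest atom is a measurable functional of the restriction of a locally finite point process to any half-line on which at least $K$ atoms occur; hence, provided both ${\rm M}_n^s$ and $\Xi^\uparrow_{(e-1)s+c_n}$ have at least $K$ atoms in $[-r_n',\infty]$ with high probability, the $K$-tuples of largest atoms inherit the total variation approximation supplied by (\ref{3appr}).

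First, I would verify this high-probability event. For $\Xi^\uparrow_{(e-1)s+c_n}$, \eqref{XiTail} gives that the number of atoms in $(-r_n',\infty]$ is Poisson with mean $\asymp e^{r_n'}\to\infty$, so the probability of fewer than $K$ atoms is $o(1)$, locally uniformly in $s$. The same conclusion for ${\rm M}_n^s$ then follows from (\ref{3appr}) by the triangle inequality for total variation. Second, identifying the $i$-th largest atom of ${\rm M}_n^s$ with $M_{n,i}(\gaa_n+s)-b_n$ and the $i$-th largest atom of $\Xi^\uparrow_{(e-1)s+c_n}$ with $\lceil\xi_i+(e-1)s+c_n\rceil$ (the latter via \eqref{xi+} together with the ordering of the atoms of $\Xi$), I would bound the total variation distance in (\ref{ThFM}) by the sum of the TV distance in (\ref{3appr}) and the two exceptional probabilities. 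All three contributions tend to zero, locally uniformly in $s$, which is exactly \eqref{ThFM}.

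The substantive analytic work---the Binomial-to-Poisson passage in (\ref{1appr}) and the subsequent comparison of the lattice Poisson process with the geometric intensities of $\Xi^\uparrow_b$ in (\ref{2appr})---has already been carried out, and the present argument is essentially bookkeeping. The only point to watch is that the truncation level $r_n'\sim \beta\log L$ from (\ref{2appr}) is tight enough to satisfy $e^{r_n'}\to\infty$, so that enough atoms survive in the truncated region to populate the top $K$; this is automatic from $\beta>0$. Hence I do not expect a genuine obstacle beyond carefully tracking local uniformity in $s$, which was already secured by the preceding approximations.
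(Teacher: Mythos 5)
Your argument is correct and is essentially the paper's own proof: Theorem~\ref{fixedmax} is stated there as a corollary of \eqref{3appr} together with the observation, via \eqref{XiTail}, that both processes have their $K$ largest atoms above $-r_n'$ w.h.p. You merely spell out the bookkeeping (measurability of the top-$K$ extraction, the triangle-inequality transfer of the high-probability event to ${\rm M}_n^s$, and the identification of the ordered atoms of $\Xi^\uparrow_{(e-1)s+c_n}$ with $\lceil\xi_i+(e-1)s+c_n\rceil$) that the paper leaves implicit.
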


\begin{remark}
(The need for a condition for convergence in distribution.)
The more general asymptotic regime   $t_n\sim \alpha \log n$ was addressed in   \cite[Ch. II, Section 6, Theorem 2]{Kolchin}.
Adjusted to the poissonised model, the cited result claims a weak convergence of $M_{n,1}(t)-r(n,t)$  provided an (integer) centering constant $r=r(n,t)$ is chosen so that $np_r(t)\to\lambda,$
for some $\lambda\in(0,\infty)$.
However, for the generic  sequence $(t_n)$ with such asymptotics the required centering need  not exist,  as exemplified by Lemma \ref{LSt} in the case $\alpha=1$.
Thus in essense the weak convergence necessitates an additional constraint on $(t_n)$ to avoid  oscillations of  certain Poisson probabilities.
\end{remark}

Oscillatory asymptotics for a sequence of distributions  involve closeness to a set of accumulation points of the sequence.
Leaving aside the periodicity patterns, this fits in the  following broad
scenario.
 Let $(S, d_{\rm })$ be a metric space,
and let $\gax\mapsto H_\gax$ be a continuous map from a compact interval
(or, more generally, some compact metric space) $J$ into $S$.

\begin{lemma}\label{LeLem2}
Let  $(G_n)$ be a sequence in $S$ and let $\rho(n)$ be a sequence in $J$.
Then the following are equivalent.
\begin{itemize}
\item[(i)]
For any subsequence $(n_j)$ and $\gax\in J$
such that   $\rho(n_j)\to\gax$ as $j\to\infty$,
we have $G_{n_j}\to H_\gax$.  
\item[(ii)]
As $n\to\infty$,
\begin{equation}\label{dconv2}
d_{\rm }(G_n, H_{\rho(n)})\to0.
\end{equation}
\end{itemize}
Furthermore,
if these hold, then $(G_n)$ is relatively compact.
\end{lemma}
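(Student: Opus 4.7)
The plan is to prove the two implications separately by a standard subsequence argument, exploiting the compactness of $J$ and the continuity of the map $\gax\mapsto H_\gax$. The relative compactness of $(G_n)$ will then fall out as an immediate byproduct.

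For (ii)$\Rightarrow$(i), suppose \eqref{dconv2} holds and take any subsequence $(n_j)$ with $\rho(n_j)\to\gax$. By continuity of $\gax\mapsto H_\gax$, we have $H_{\rho(n_j)}\to H_\gax$ in $S$. The triangle inequality then gives
\begin{equation*}
d(G_{n_j},H_\gax)\le d(G_{n_j},H_{\rho(n_j)})+d(H_{\rho(n_j)},H_\gax)\longrightarrow 0,
\end{equation*}
which is exactly $G_{n_j}\to H_\gax$.

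For (i)$\Rightarrow$(ii), I would argue by contradiction. If \eqref{dconv2} fails, there exists $\eps>0$ and a subsequence $(n_j)$ with $d(G_{n_j},H_{\rho(n_j)})\ge\eps$ for all $j$. Since $J$ is compact, we can pass to a further subsequence (still called $(n_j)$) along which $\rho(n_j)\to\gax$ for some $\gax\in J$. Applying (i) to this subsequence yields $G_{n_j}\to H_\gax$, while continuity of $H$ gives $H_{\rho(n_j)}\to H_\gax$. Hence $d(G_{n_j},H_{\rho(n_j)})\to 0$, contradicting the choice of $\eps$.

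Finally, for relative compactness under (i) and (ii), let $(n_j)$ be any subsequence. By compactness of $J$, there is a sub-subsequence $(n_{j_k})$ with $\rho(n_{j_k})\to\gax$ for some $\gax\in J$, and then (i) gives $G_{n_{j_k}}\to H_\gax$. Thus every subsequence of $(G_n)$ admits a convergent sub-subsequence, so $(G_n)$ is relatively compact. There is no real obstacle here; the only thing to be careful about is invoking compactness of $J$ correctly in the contradiction step, and ensuring that continuity of $\gax\mapsto H_\gax$ is used both to transfer convergence of $\rho$ to convergence of $H_\rho$ and, implicitly, to guarantee that the limit $H_\gax$ in (i) is well defined.
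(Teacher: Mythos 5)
Your proof is correct and follows essentially the same route as the paper: the (ii)$\Rightarrow$(i) direction is the identical triangle-inequality argument, and your contradiction argument for (i)$\Rightarrow$(ii) is just the contrapositive phrasing of the paper's subsequence principle (every subsequence of $d(G_n,H_{\rho(n)})$ has a further subsequence tending to $0$, using compactness of $J$ and continuity of $\gax\mapsto H_\gax$), with the relative compactness argument also identical.
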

\begin{proof}
(ii)$\Rightarrow$(i).
If (ii) holds and $\rho(n_j)\to\gax$, then
$d(G_{n_j},H_{\rho(n_j)})\to0$ by (ii), and
$d(H_{\rho(n_j)},H_\gax)\to0$ by the assumption on $H$.
Consequently, $d(G_{n_j},H_{\gax})\to0$.

(i)$\Rightarrow$(ii).
Consider any subsequence $(n_j)$.
Since the set $J$ is compact, we may select a subsubsequence $(n'_j)$ such
that $n'_j\to\gax$ for some $\gax\in J$, and thus $G_{n_j'}\to H_\gax$ by the
assumption (i).
Furthermore, $H_{\rho(n_j')}\to H_\gax$ by continuity.
Consequently, as $j\to\infty$,
\begin{align}
  d(G_{n'_j},H_{\rho(n'_j)})
\le
  d(G_{n'_j},H_{\gax})
+
  d(H_\gax,H_{\rho(n'_j)})
\to 0.
\end{align}
Hence every subsequence has a subsubsequence for which \eqref{dconv2} holds;
as is well-known, this implies \eqref{dconv2} for the full sequence.

Finally, if (i) and (ii) hold, then for any subsequence $(n_j)$ we may select a
subsubsequence $(n_j')$ such that $\rho(n_j')$ converges, which by (i)
implies that $(G_{n_j'})$ converges. Hence, $(G_n)$ is relatively compact.
\end{proof}

Applying the lemma to the setting of Theorem \ref{fixedmax},  we take for
$(S,d)$ the space of probability distributions on $\bbZ^K$ with $d=d_{\rm TV}$,
$J:=[0,1]$,
and  $\rho(n):=c_n$.
Then \eqref{ThFM} is an instance of \eqref{dconv2}, and (i) in Lemma
\ref{LeLem2} describes subsequential limits in distribution,
for subsequences with $c_n\to \gax$ for some $\gax\in\oi$.

\subsection{Stopped maxima}  \label{S5.5}
We have now all prerequisites  to derive
one of our main results,
a multivariate extension  of
Theorems \ref{TI} and \ref{TI2},
from Fact \ref{FACT}.
The latter says that
conditionally on $\tau_n=t$, the
$n-\ell-1$ box occupancy counts exceeding $m$ at this time,
excluding the box that gets the ball at $\tau_n$,
are i.i.d.\ with the truncated Poisson  distribution
\begin{align}\label{trunc}
  {\mathbb P}[\Pi_i(t)=r\mid\Pi_i(t)\geq m+1]
=\frac{p_r(t)}{\overline{P}_{m}(t)},
\qquad r\geq m+1.
\end{align}
Recall also the shorthand notation  $M_{n,i}=M_{n,i}(\tau_n)$ for the box occupancies when the
allocation is stopped.

\begin{theorem}\label{stoppedmax}
For every $K\in{\mathbb Z}_{>0}$, as $n\to\infty$,
\begin{equation}\label{ThSM}
d_{\rm TV}\left((M_{n,i}-b_n)_{i=1}^K\,,\,(\lceil\xi_i+ (e-1)\tau+c_n\rceil)_{i=1}^K\right)\to 0,
\end{equation}
where $\tau\stackrel{\rm d}{=}{\rm Gumbel}(\ell+1)$ is independent of\/ $\Xi$.
\end{theorem}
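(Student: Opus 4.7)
The plan is to reduce the statement to the fixed-time version \refT{fixedmax} by conditioning on $\tau_n$ and then integrating against the limit distribution of $\tau_n-\gaa_n$. Throughout, $K$ is fixed and $n$ is taken large enough that $n-\ell-1>K$.

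\textbf{Step 1 (conditional reduction).} Fix $t=\gaa_n+s$ with $s$ in a compact set $[-A,A]$. By \refF{FACT}, conditionally on $\tau_n=t$ the multiset $\{\Pi_i(t):i\in[n-1],\,\Pi_i(t)\ge m+1\}$ consists of $n-\ell-1$ i.i.d.\ variables with the truncated Poisson law \eqref{trunc}, and the top $K$ of $M_{n,1},\dots,M_{n,K}$ coincide with the top $K$ of this sample (once $n$ is large enough that $M_{n,K}>m$ w.h.p.). Let $(\widetilde M_{n,i}(t))_{i=1}^K$ denote the top $K$ order statistics of $n$ i.i.d.\ $\Poisson(t)$ variables. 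Two routine couplings show that in total variation
\begin{align}
\dtv\bigpar{(M_{n,i})_{i=1}^K \mid \tau_n{=}t,\ (\widetilde M_{n,i}(t))_{i=1}^K}\to 0
\end{align}
uniformly in $s\in[-A,A]$: first, the event that some of the top $K$ boxes contains $\le m$ balls at time $t$ has probability $o(1)$ by the estimate \eqref{sb1} applied with $r_n=b_n-m$, so truncation on $\{m+1,\dots\}$ is invisible to the top $K$; second, discarding $\ell+1$ boxes from $n$ i.i.d.\ samples changes the top $K$ with probability $O(1/n)$ by exchangeability (it would have to be one of the $\ell+1$ largest, an event of probability $(\ell+1)K/n$).

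\textbf{Step 2 (applying \refT{fixedmax}).} By \refT{fixedmax} applied to $(\widetilde M_{n,i}(\gaa_n+s))_{i=1}^K$ we have, locally uniformly in $s$,
\begin{align}
\dtv\Bigpar{(\widetilde M_{n,i}(\gaa_n+s)-b_n)_{i=1}^K,\,(\lceil \xi_i+(e-1)s+c_n\rceil)_{i=1}^K}\to 0.
\end{align}
Combining with Step~1 gives, uniformly in $s\in[-A,A]$,
\begin{align}\label{pl-cond}
\dtv\Bigpar{(M_{n,i}-b_n)_{i=1}^K \mid \tau_n{=}\gaa_n+s,\,(\lceil \xi_i+(e-1)s+c_n\rceil)_{i=1}^K}\to 0.
\end{align}

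\textbf{Step 3 (integration).} Choose $A$ so large that $\P[|\tau-s|>A]<\eps$ and, by \eqref{taulim}, also $\P[|\tau_n-\gaa_n|>A]<2\eps$ for all large $n$. Write $S_n:=\tau_n-\gaa_n$ and let $S$ denote an independent copy of $\tau$. Using the standard fact that $\dtv$ of a mixture is bounded by the expected $\dtv$ of the components plus the $\dtv$ of the mixing distributions, we obtain
\begin{align}
\hskip2em&\hskip-2em
\dtv\Bigpar{(M_{n,i}-b_n)_{i=1}^K,\,(\lceil \xi_i+(e-1)\tau+c_n\rceil)_{i=1}^K}
\nonumber\\&
\le\ \E\bigsqpar{\indic{|S_n|\le A}\,\Delta_n(S_n)} + \dtv(S_n,S) + 4\eps,
\end{align}
where $\Delta_n(s)$ denotes the conditional $\dtv$ in \eqref{pl-cond}. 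By Step~2 the integrand tends to $0$ uniformly on $[-A,A]$, and $\dtv(S_n,S)\to 0$ follows from the weak convergence $S_n\dto S$ because the limit distribution of $S$ has a continuous density \eqref{ell-Gum} and the approximating $\lceil\xi_i+(e-1)S+c_n\rceil$ only depends on $S$ in a continuous way after an $\lceil\cdot\rceil$ that is harmless inside a total-variation statement for integer-valued vectors (one can smooth further by noting that the map $s\mapsto \text{Law}(\lceil \xi_i+(e-1)s+c_n\rceil)_{i=1}^K$ is continuous in $\dtv$ at all $s$ with no integer in the support of $\xi_i+(e-1)s+c_n$, which holds almost everywhere).

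\textbf{Main obstacle.} The delicate point is passing from the conditional $\dtv$ convergence in \eqref{pl-cond} to unconditional $\dtv$ convergence, because $\dtv$ does not a priori pass to mixtures unless the mixing distribution is also controlled. The argument above handles this by truncating to $|S_n|\le A$ (using tightness of $\tau_n-\gaa_n$) and then using the continuous dependence of the limit law on $s$, together with $\dtv(S_n,S)\to0$; everything else reduces to the already-established \refT{fixedmax} and the elementary estimates of Step~1.
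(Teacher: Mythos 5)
Your overall route---condition on $\tau_n$, apply \refT{fixedmax} to the conditional (truncated Poisson) sample locally uniformly in $s$, then de-condition using tightness of $\tau_n-\gaa_n$ and a truncation to a compact set---is essentially the paper's route. The genuine gap is in Step~3, in the claim $\dtv(S_n,S)\to0$ and its justification. Weak convergence $S_n\dto\tau$ to a limit with a continuous density does \emph{not} imply convergence in total variation (a lattice-valued sequence can converge weakly to a Gumbel law while remaining at total variation distance $1$ from it), so the inference ``$\dtv(S_n,S)\to0$ follows from $S_n\dto S$ because the limit has a continuous density'' is invalid as stated; and your decomposition makes the whole proof hinge on this term, since you chose to compare the two mixtures by comparing the mixing laws in total variation. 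The claim itself happens to be true here---$\tau_n$ has the explicit density \eqref{taund}, one can check pointwise convergence of the density of $\tau_n-\gaa_n$ to the ${\rm Gumbel}(\ell+1)$ density \eqref{ell-Gum} (using \eqref{mSt} and $P_m(t)\sim p_m(t)$), and then invoke Scheff\'e's lemma---but that argument is not in your proof. Alternatively, and this is how the paper avoids the issue, one never needs total variation control of the mixing law: keep $\tau_n-\gaa_n$ inside the approximating vector, i.e.\ first establish $\dtv\bigl((M_{n,i}-b_n)_{i=1}^K,(\lceil\xi_i+(e-1)(\tau_n-\gaa_n)+c_n\rceil)_{i=1}^K\bigr)\to0$ (this is exactly your Steps~1--2 plus de-conditioning by tightness), and only then replace $\tau_n-\gaa_n$ by $\tau$ \emph{after} applying the $\bbZ^K$-valued kernel: along subsequences with $c_n\to c_0$, weak convergence and the mapping theorem (the ceiling map is a.s.\ continuous at $\xi_i+(e-1)\tau+c_0$) give convergence in distribution of the integer-valued approximants, hence total variation convergence by Scheff\'e on the countable set $\bbZ^K$, and \refL{LeLem2} removes the subsequence restriction. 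Your own parenthetical about the a.e.\ $\dtv$-continuity of $s\mapsto$ law of $(\lceil\xi_i+(e-1)s+c_n\rceil)_{i=1}^K$ is the right idea, but it must replace, not supplement, the unproved $\dtv(S_n,S)\to0$.

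Two smaller points. Your appeal to \eqref{sb1} with $r_n=b_n-m$ is outside the range $r_n\sim\alpha L$, $0<\alpha<e-1$, for which that estimate is stated; the fact you actually need---that w.h.p.\ at least $K$ boxes hold more than $m$ balls at time $\gaa_n+s$---is immediate, since the number of such boxes is binomial with mean $n(1-O(n^{-1}))$. Also, the paper's handling of your Step~1 is lighter: rather than coupling the truncated size-$(n-\ell-1)$ sample with an untruncated size-$n$ sample, it observes that $P_m(t)=O(n^{-1})$ makes the asymptotics of Lemma~\ref{LSt} valid for the truncated law \eqref{trunc} as well (and replacing $n$ by $n-\ell-1$ is harmless), so the proof of \refT{fixedmax} applies verbatim to the conditional sample; your coupling argument can be made to work but is stated rather loosely (the mismatched coordinates in the truncation coupling number $O(1)$ in expectation, and one must argue they miss the top $K$).
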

\begin{proof}
For $t=\gaa_n+O(1)$ we have $P_{m}(t)=O(n^{-1})$, therefore the  asymptotics in
Lemma \ref{LSt} hold for the truncated Poisson distribution in \eqref{trunc}
as well;
we may also replace $n$ by $n-\ell-1$ without changing the result.
In view of this, it follows from the fact mentioned before the theorem that
the approximation in Theorem \ref{fixedmax} remains valid also  for
$(M_{n,i}(\tau_n)-b_n)_{i=1}^K$ conditioned on $\tau_n-\gaa_n=s$, locally
uniformly in $s$.
In other words,
conditionally on $\tau_n$ and with $\Xi=\set{\xi_i}$ independent of $\tau_n$,
\begin{equation}\label{thsm3}
d_{\rm TV}\left((M_{n,i}(\tau_n)-b_n)_{i=1}^K\,,
\,(\lceil\xi_i+ (e-1)(\tau_n-\gaa_n)+c_n\rceil)_{i=1}^K\right)\to 0,
\end{equation}
uniformly for $\tau_n-\gaa_n$ in a compact set.
Recall that $\tau_n-\gaa_n\stackrel{\rm d}{\to}\tau$.
In particular, $\tau_n - \gaa_n$ is tight,
and thus it follows that \eqref{thsm3} holds also unconditionally.

If $c_{n_j}\to c_0$  for some subsequence $(n_j)$ then
furthermore, along the subsequence,
\begin{align}\label{thsm4}
\left(\lceil\xi_i+ (e-1)(\tau_n-\gaa_n)+c_n\rceil\right)_{i=1}^K
\stackrel{\rm d}{\to}
\left(\lceil\xi_i+ (e-1)\tau+c_0)\rceil\right)_{i=1}^K
\end{align}
by the mapping theorem \cite[Theorem 5.1]{Billingsley}
(since $x\mapsto\ceil{x}$ a.s.\ is continuous at $\xi_i+ (e-1)\tau+c_0$);
since the random variables in \eqref{thsm4} take values in the countable set
$\bbZ^K$, it follows
by Scheff{\'e}'s lemma 
that \eqref{thsm4} holds also in total variation.
Hence, \eqref{thsm3} implies that, along $(n_j)$,
\begin{equation}\label{thsm5}
d_{\rm TV}\left((M_{n,i}(\tau_n)-b_n)_{i=1}^K\,,
\,(\lceil\xi_i+ (e-1)\tau+c_0)\rceil)_{i=1}^K\right)\to 0.
\end{equation}
Finally, Lemma \ref{LeLem2} enables us to pass
from the convergence of subsequences to the claimed approximation \eqref{ThSM}.
\end{proof}

\subsection{Equivalent formulations}

By analogy  with \refT{TI3}, we may formulate \refT{stoppedmax} in an equivalent way
for the non-centered variables $M_{n,i}=M_{n,i}(\tau_n)$.
\begin{theorem}\label{stoppedmax3}
For every $K\in{\mathbb Z}_{>0}$, as $n\to\infty$,
\begin{equation}\label{ThSM3}
d_{\rm TV}\left((M_{n,i})_{i=1}^K\,,\,(\lceil\xi_i+ (e-1)\tau+\tb_n\rceil)_{i=1}^K\right)\to 0,
\end{equation}
where $\tau\stackrel{\rm d}{=}{\rm Gumbel}(\ell+1)$ is independent of\/
$\Xi=\{\xi_i\}$.
\end{theorem}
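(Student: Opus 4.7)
The plan is to deduce \refT{stoppedmax3} from \refT{stoppedmax} by the same elementary manipulation used to pass from \refT{TI2} to \refT{TI3}, applied componentwise. The key observations are that total variation distance between two $\bbZ^K$-valued random vectors is invariant under adding the same integer vector to both, and that the ceiling function commutes with the addition of an integer: $\lceil x+n\rceil=\lceil x\rceil+n$ for all $x\in\bbR$ and $n\in\bbZ$.

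Concretely, since $b_n\in\bbZ$, the vector $(b_n,\ldots,b_n)\in\bbZ^K$ can be added to both components in the TV distance appearing in \refT{stoppedmax} without changing its value. This gives
\begin{align}
d_{\rm TV}\bigpar{(M_{n,i})_{i=1}^K\,,\,(\lceil \xi_i+(e-1)\tau+c_n\rceil+b_n)_{i=1}^K}\to 0.
\end{align}
Using the integer-shift property of the ceiling function and the decomposition $\tb_n=b_n+c_n$ from \eqref{bc},
\begin{align}
\lceil \xi_i+(e-1)\tau+c_n\rceil+b_n
=\lceil \xi_i+(e-1)\tau+c_n+b_n\rceil
=\lceil \xi_i+(e-1)\tau+\tb_n\rceil
\end{align}
for each $i$, which yields \eqref{ThSM3}.

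There is essentially no obstacle: the argument is purely bookkeeping, and the only thing to check is that the manipulation in the one-dimensional setting of \refT{TI3} (displayed in \eqref{ww1}) extends coordinatewise, which it clearly does because the shift by $b_n$ is deterministic and identical across coordinates.
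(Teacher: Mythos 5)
Your proof is correct and is essentially the paper's own argument: the paper derives \refT{stoppedmax3} from \refT{stoppedmax} by exactly the same manipulation as in the proof of \refT{TI3} (translation invariance of $d_{\rm TV}$ under the deterministic integer shift $b_n$ together with $\lceil x+b_n\rceil=\lceil x\rceil+b_n$ and $\tb_n=b_n+c_n$), applied coordinatewise.
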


As emphasised in \refS{SSvague}, it is technically convenient to regard the point
processes in focus as point processes on $\oooox$, although they never have an
atom at $\infty$.
Let $\cN\oooox$ be the space of locally finite integer-valued measures on
$\oooox$, and
regard a point process on $\oooox$
as a random element of $\cN\oooox$.
The space $\cN\oooox$ is equipped with the vague topology, which
is metrisable (and Polish) \cite[Proposition 3.1]{BP}.
Using this
framework we can state \refT{stoppedmax} in the following equivalent
form that involves  a limit of the entire set
$\set{\Pi_i(\tau_n)}_1^n=\set{M_{n,i}}_1^n$ regarded as a point process.
\begin{theorem}\label{TPP}
  Let $d$ be any metric on the space $\cN\oooox$ that induces the vague
  topology.
Then
\begin{equation}\label{tpp}
d\bigpar{\set{M_{n,i}-b_n}_{i=1}^n\,,\,
\set{\lceil\xi_i+ (e-1)\tau+c_n)\rceil}_{i=1}^\infty}\to 0.
\end{equation}
\end{theorem}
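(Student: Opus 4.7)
The plan is to combine \refT{stoppedmax} (total-variation convergence of the top $K$ order statistics for every fixed $K$) with \refL{L:point1} (vague convergence is equivalent to finite-dimensional convergence of the ordered atoms) by means of a coupling, using \refL{LeLem2} to handle the $n$-dependence through $c_n$. The random distance in \eqref{tpp} should be read as tending to $0$ in probability under a suitable coupling of $(M_{n,i})$ to $(\xi_i,\tau)$.

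The key observation is that both candidate point processes are locally finite on $\oooox$: on the right, since $\xi_i\to-\infty$ a.s., each compact set of the form $[R,\infty]$ contains only finitely many atoms of the limit process; on the left, \refT{stoppedmax} yields tightness of the number of atoms of $\{M_{n,i}-b_n\}$ exceeding any fixed level. Because $d$ metrises the vague topology on the Polish space $\cN\oooox$, the distance $d(\cdot,\cdot)$ becomes small as soon as the two processes agree on a large compact set $[R,\infty]$. Hence it suffices, for every fixed $R$ and every $\eps>0$, to construct a coupling under which the two processes agree on $[R,\infty]$ with probability at least $1-\eps$ for $n$ large enough.

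To build such a coupling, I would invoke \refT{stoppedmax}: for each $K$ the top-$K$ vectors differ in total variation by some $\eps_K(n)\to 0$, and the maximal-coupling interpretation of $d_{\rm TV}$ produces, for each $K$ and $n$, a coupling under which the top-$K$ atoms coincide with probability $1-\eps_K(n)$. Choosing $K=K_n\to\infty$ slowly enough, coincidence still holds with probability $\to 1$; since $\xi_{K_n+1}+(e-1)\tau+c_n\to-\infty$ in probability, the $(K_n+1)$st atom of the limit process falls below any prescribed $R$ w.h.p., so the two processes agree on $[R,\infty]$ with probability tending to one. The main technical obstacle is the diagonal construction: \refT{stoppedmax} does not provide uniformity of $\eps_K(n)$ in $K$, so one must either extract $K_n\to\infty$ by diagonalisation, or argue subsequentially — fixing a subsequence along which $c_{n_j}\to c_0\in\oi$, using Scheff\'e's lemma together with a.s.\ continuity of the ceiling function at $\xi_i+(e-1)\tau+c_0$ to upgrade \eqref{ThSM} to TV convergence towards $(\lceil\xi_i+(e-1)\tau+c_0\rceil)_{i=1}^K$, then applying \refL{L:point1} to obtain vague distributional convergence of the full processes, Skorokhod representation in $\cN\oooox$ to obtain an a.s.\ coupling along the subsequence, and finally \refL{LeLem2} to package these subsequential statements into \eqref{tpp}.
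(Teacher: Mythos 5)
Your subsequential fallback is, in substance, the paper's own proof: fix a subsequence along which $c_{n_j}\to c_0$, observe that total-variation convergence of the top-$K$ vectors to $(\lceil\xi_i+(e-1)\tau+c_0\rceil)_{i=1}^K$ along that subsequence is already available (it is \eqref{thsm5}, established inside the proof of \refT{stoppedmax} by exactly the Scheff\'e/ceiling-continuity argument you describe, so it need not be redone), then apply \refL{L:point1} to pass from convergence of the ordered atoms to vague convergence in distribution on $\oooox$, and finally \refL{LeLem2} to absorb the $n$-dependence through $c_n$. Note that the paper reads \eqref{tpp} as a statement about the laws of the two point processes (this is the only reading under which \refL{L:point1} and \refL{LeLem2} apply directly); your stronger pathwise ``coupling, convergence in probability'' formulation implies it, but then \refL{LeLem2} should be applied to the laws (say with a Prokhorov-type metric associated to $d$) rather than to the random configurations, and the Skorokhod step is needed only for that stronger reading.

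Your primary route is genuinely different, but its pivotal step is not justified as stated: for an \emph{arbitrary} metric $d$ inducing the vague topology, agreement of the two configurations on $[R,\infty]$ for a fixed (large negative) $R$ does not force $d$ to be small. Closeness in an arbitrary metrisation is governed only by which sequences converge to which elements of $\cN\oooox$; here neither configuration converges by itself (the bulk of the $n$ atoms $M_{n,i}-b_n$ escapes to $-\infty$, and only the laws converge, subsequentially), and two metrics inducing the same topology can disagree arbitrarily on pairs that both drift. The implication you invoke does hold for concrete metrics built from a countable family of compactly supported test functions, with $R=R(\eps,d)$, but the theorem quantifies over all metrics, so the clean repair is precisely the subsequential detour: along $c_{n_j}\to c_0$ both processes converge (in distribution, or under your coupling in probability) to the common limit $\set{\lceil\xi_i+(e-1)\tau+c_0\rceil}$, and then any metrisation gives distance tending to zero by the triangle inequality through that limit. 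A further small point in the diagonal step: you must also check that the $(K_n+1)$st atom of the \emph{left} process drops below $R$; on the coincidence event this does follow, since it is at most the $K_n$th atom, which equals the $K_n$th atom of the limit process and tends to $-\infty$ in probability, but it should be said.
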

\begin{proof}
Consider a subsequence $(n_j)$ such that $c_{n_j}\to c_0$ for some $c_0\in\oi$.
Then, by the virtue of
 \refL{L:point1},  we obtain from
\eqref{thsm5} that, along the subsequence,
\begin{equation}\label{tpp2}
d\left(\set{M_{n,i}-b_n}_{i=1}^n\,,
\,\bigset{\lceil\xi_i+ (e-1)\tau+c_0\rceil}_{i=1}^\infty\right)\to 0.
\end{equation}
The result now follows from \refL{LeLem2}.
\end{proof}

At last, instead of looking at few rightmost atoms, we may restrict our
point processes to a vicinity of $\infty$. This leads by the virtue of Lemma
\ref{L:point1} (and \refL{LeLem2})
to the following equivalent version.
\begin{theorem}\label{TPPs} For every $r\in{\mathbb Z}$
\begin{equation}\label{tppS}
d_{\rm TV} \bigpar{{\rm M}_n^{\tau_n-\alpha_n}|_{[r,\infty]}, \Xi^\uparrow_{(e-1)\tau+c_n}|_{[r,\infty]}}\to 0.
\end{equation}
\end{theorem}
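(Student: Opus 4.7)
The plan is to deduce \eqref{tppS} from \refT{stoppedmax} by a truncation argument: the restriction of a locally finite point process $\HH$ to $[r,\infty]$ is, with high probability, determined by only finitely many of its top atoms, and for any fixed number of them \refT{stoppedmax} supplies TV convergence. Fix $r\in\bbZ$ and, for each $K\in\bbZ_{>0}$, introduce the measurable map $\varphi_K$ sending $(x_1,\ldots,x_K)\in\oooox^K$ to the point measure $\sum_{i=1}^K\indic{x_i\ge r}\gd_{x_i}$. For a point process $\HH=\set{\eta_i}$ with ordered atoms $\eta_1\ge\eta_2\ge\cdots$, the identity $\HH|_{[r,\infty]}=\varphi_K(\eta_1,\ldots,\eta_K)$ holds on the event $\set{\HH([r,\infty])\le K}$. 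Writing $\HH_n:=\set{M_{n,i}-b_n}_{i=1}^n$ and $\HH_n':=\Xi^\uparrow_{(e-1)\tau+c_n}$, with respective ordered atoms $(\eta_{n,i})$ and $(\eta_{n,i}')$, the data-processing inequality for $\varphi_K$ combined with the coupling characterisation of $d_{\rm TV}$ gives
\begin{align*}
  d_{\rm TV}\bigpar{\HH_n|_{[r,\infty]},\HH_n'|_{[r,\infty]}}
  &\le d_{\rm TV}\bigpar{(\eta_{n,i})_{i=1}^K,(\eta_{n,i}')_{i=1}^K}\\
  &\quad +\P\bigsqpar{\HH_n([r,\infty])>K}+\P\bigsqpar{\HH_n'([r,\infty])>K}.
\end{align*}
By \refT{stoppedmax}, the first term on the right vanishes as $n\to\infty$ for every fixed $K$.

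The remaining task is to make the two tail probabilities arbitrarily small by choosing $K$ large, essentially uniformly in $n$. For $\HH_n'$, conditioning on $\tau$ and using \eqref{lxx1s}, the count $\HH_n'([r,\infty])$ is Poisson with mean $e^{(e-1)\tau+c_n-r+1}$; since $c_n\in\oi$ is bounded and $\tau$ is a.s.\ finite, the family $\set{\HH_n'([r,\infty])}_n$ is tight, whence the corresponding tail tends to $0$ as $K\to\infty$ uniformly in $n$. For $\HH_n$, I would bootstrap from \refT{stoppedmax} applied with parameter $K+1$: the event $\set{\HH_n([r,\infty])>K}$ coincides with $\set{M_{n,K+1}-b_n\ge r}$, whose probability differs by $o(1)$ from $\P\bigsqpar{\xi_{K+1}+(e-1)\tau+c_n>r-1}\le\P\bigsqpar{\xi_{K+1}+(e-1)\tau>r-2}$, which tends to $0$ as $K\to\infty$ because $\xi_{K+1}\asto-\infty$ and $\tau<\infty$ a.s.

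Given $\eps>0$, choose $K$ so that both tail probabilities are below $\eps/3$ (uniformly in $n$ for $\HH_n'$, and for $n$ large enough for $\HH_n$); invoking \refT{stoppedmax} for this fixed $K$ bounds the first term by $\eps/3$ for all large $n$, yielding $\limsup_n d_{\rm TV}\bigpar{\HH_n|_{[r,\infty]},\HH_n'|_{[r,\infty]}}\le\eps$ and hence \eqref{tppS}. I expect no serious obstacle; the only point requiring care is the correct application of the data-processing inequality to the deterministic measurable map $\varphi_K$, ensuring that the TV distance between its output point measures is dominated by the TV distance between the input top-$K$ vectors of atoms that \refT{stoppedmax} controls.
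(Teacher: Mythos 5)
Your argument is correct, and the key inequality is sound: with $\varphi_K$ measurable, the total variation distance of the restricted processes is bounded by the data-processing term $d_{\rm TV}\bigpar{(\eta_{n,i})_{i=1}^K,(\eta'_{n,i})_{i=1}^K}$ plus the two probabilities of more than $K$ atoms in $[r,\infty]$; the first is controlled by \refT{stoppedmax} (note that $(\lceil\xi_i+(e-1)\tau+c_n\rceil)_{i=1}^K$ is indeed the nonincreasing list of top atoms of $\Xi^\uparrow_{(e-1)\tau+c_n}$, since the ceiling is monotone), the second is uniform in $n$ because by \eqref{lxx1s} the count is, given $\tau$, Poisson with mean $e^{(e-1)\tau+c_n-r+1}$ and $c_n\in[0,1)$, and the third follows from the identity $\{{\rm M}_n^{\tau_n-\alpha_n}([r,\infty])>K\}=\{M_{n,K+1}-b_n\ge r\}$, \refT{stoppedmax} with $K+1$, and $\xi_{K+1}\asto-\infty$. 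The paper itself gives no written proof: it obtains the statement as an ``equivalent version'' of \refTs{stoppedmax}--\ref{TPP} via \refL{L:point1} (equivalence of vague convergence on $\oooox$ with convergence of the ordered atoms) together with \refL{LeLem2}, i.e.\ along subsequences with $c_n\to c_0$ one gets weak convergence of the restricted lattice processes, which take values in a countable set and hence converge in total variation (Scheff\'e), and the oscillation lemma \refL{LeLem2} then yields \eqref{tppS} for the full sequence. Your route reaches the same conclusion more directly: the explicit $\eps$-management with a truncation level $K$ replaces the subsequence machinery and the topological step, at the small cost of verifying tightness of the number of atoms in $[r,\infty]$ for both processes, which is implicit in the paper's approach anyway; both arguments ultimately rest on \refT{stoppedmax}. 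The only point you flagged, the data-processing step, is unproblematic since $\varphi_K$ is a deterministic measurable map applied to both vectors.
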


\section{Proof of the main result by interpolation}\label{Spf}
\noindent
\refT{TPP}
shows that the lattice point process of stopped counts $\set{\Pi_i(\tau_n)}_{i=1}^n$
may be approximated by the exponential Poisson process $\Xi$ that is shifted and then has
all atoms rounded to integers.
We present here an alternative proof based on the idea of interpolation of the lattice process to ${\mathbb R}$, that amounts to
artificially
adding the `missing' fractional parts to the atoms and then showing convergence to an exponential
Poisson process on $\bbR$. By this approach, the oscillations are revealed only at
the final stages of the argument.

The shift operation on $\Xi$  from Section \ref{S3.1} makes sense for  arbitrary point process on ${\mathbb R}$.
Thus,
for a point process ${\rm H}=\set{\eta_i}$ and a real number $b$, we let ${\rm H}\pm b$
denote the shifted processes
\begin{align}\label{ema}
 {\rm  H}+b:=\set{\eta_i+b},
\qquad   {\rm H}-b:=\set{\eta_i-b}
\end{align}
obtained by translating each atom the same way.

For convenience of notation, let
$W_t
\stackrel{\rm d}{=}
\Po(t)$, and let $W_t'$ and $W_t''$ have the
truncated
 (conditioned)
distributions
$W_t'
\stackrel{\rm d}{=}
(W_t\mid W_t\ge m+1)$ and $W_t''\eqd(W_t\mid W_t\le m)$.
By Fact \ref{FACT}, if we condition on $\tau_n=t$,
then  the occupancy numbers
$\set{\Pi_i(\tau_n)}_{i=1}^n$ are given by
\begin{align}\label{pip}
\set{\Pi'_i(t)}_{i=1}^{n-\ell-1}
\cup
\set{\Pi''_j(t)}_{j=1}^{\ell}
\cup\set{m+1},
\end{align}
where all random variables are independent and have the distributions
$\Pi'_i(t)\eqd W_t'$ and
$\Pi''_j(t)
\stackrel{\rm d}{=}
W_t''$.
The last $\ell+1$ of the numbers in \eqref{pip} are $\le m+1$ and may be
ignored, as will be seen below,
so only the $n-\ell-1$ numbers $\Pi_i'(t)$ are important asymptotically.

Let $E\stackrel{\rm d}{=}
\Exp(1)$ and consider
$\rand \stackrel{\rm d}{=}(E\mid E<1)$;
 %
thus $\rand$
is a random variable in $[0,1)$ with  the distribution function
\begin{align}\label{ee4}
\P[R\le x]= \frac{1-e^{-x}}{1-e\qw},
\qquad 0\le x\le 1.
\end{align}
Let  $\rand_i$ ($i\ge 1$)
be independent copies of $\rand$,
also independent of all other variables.
We define, for a given $t=t_n$,  the modified variables
\begin{align}\label{e5}
  \tPii_i:=\Pi'_i(t_n)+\rand_i
\end{align}
and note that we are back to integer counts via
$\Pi'_i(t_n)=\floor{\tPii_i}$.

Let $s_n$ and $x_n$ be any bounded sequences of real numbers, and
consider only $n$ that are so large that $\log n+s_n\ge0$.
Let, recalling \eqref{an}, \eqref{bt}, and $L:=\log n$,
\begin{align}
\label{tn}
t_n&:=\gaa_n+s_n,
\\\label{yn}
y_n&:=\tb_n+x_n,
\\\label{kn}
k_n&:=\floor{y_n},
\\\label{x'n}
x'_n&:=k_n-\tb_n
.\end{align}
Note that
\begin{align}\label{e66}
  x_n-x'_n=y_n-k_n =\frax{y_n}\in[0,1).
\end{align}
Then, for $i\le n-\ell-1$ we have, with $W'_{t_n}$ as above and independent of
$\rand_i$, using \eqref{ee4},
\begin{align}\label{e7}
  \P[\tPii_i>y_n\mid \taun = t_n]
&=\P[W'_{t_n}+\rand_i>y_n]
\\\notag&
=\P[W'_{t_n}>k_n]
+
\P[
W'_{t_n}=k_n]
\P[\rand_i>y_n-k_n]
\\\notag&
=\P[W'_{t_n}>k_n]
+
\P[
W'_{t_n}=k_n]
\frac{e^{-(y_n-k_n)}-e\qw}{1-e\qw}.\end{align}
We have $t_n\to\infty$, and thus
$\P
[W_{t_n}\le m]
\to0$.
Hence, by Lemma \ref{LSt} and simple calculations,
\begin{align}\label{e9}
  \P[
W'_{t_n}=k_n]
&
=  \P[W_{t_n}=k_n\mid W_{t_n}>0]
=   \P[W_{t_n}=k_n]
\etto
\\\notag&
=\frac{e-1}{n} e^{(e-1)s_n-x_n'+o(1)}.
\end{align}
and similarly
\begin{align}\label{e8}
  \P[W'_{t_n}>k_n]&
=   \P[W_{t_n}>k_n]\etto
=\frac{1}{n} e^{(e-1)s_n-x_n'+o(1)}.
\end{align}
Then \eqref{e7}--\eqref{e8} yield, recalling  \eqref{yn} and \eqref{e66},
\begin{align}\label{ea1}
  \P[\tPii_i>\tb_n+x_n]&
=\frac{1}{n} e^{(e-1)s_n-x_n'+o(1)}
+
\frac{e-1}{n} e^{(e-1)s_n-x_n'+o(1)}
\frac{e^{-(y_n-k_n)}-e\qw}{1-e\qw}
\\\notag&
=\frac{1}{n} e^{(e-1)s_n-x_n'}\Bigpar{e^{o(1)}+e^{o(1)}\bigpar{e^{1-(x_n-x'_n)}-1}}
\\\notag&
=\frac{1}{n}\Bigpar{ e^{(e-1)s_n+1-x_n}+o(1)}.
\end{align}

Let $x\in\bbR$ and choose  $x_n:=x+(e-1)s_n+1$.
Furthermore, define
\begin{align}\label{ea3}
  \hPi_i(s_n):=\tPii_i-\tb_n-(e-1)s_n-1
=\Pi'_i(t_n)+R_i-\tb_n-(e-1)s_n-1
.\end{align}
Then \eqref{ea1} yields
\begin{align}\label{ea2}
\P[\hPi_i(s_n)>x]&
=\P[\tPii_i>\tb_n+x_n]
=\frac{1}{n}\Bigpar{ e^{-x}+o(1)}.
\end{align}
Note that  $\hPi_i(s_n)$ depends on the chosen bounded
sequence $s_n$,
both directly and through $t_n$,
but  the \rhs{} of \eqref{ea2} does not.

The random variables $\hPi_i(s_n)$
are independent for $1\le i\le n-\ell-1$.
Consequently, if we define the point process
\begin{align}\label{ea5}
  \hXiisn&:=\bigset{\hPi_i(s_n)}_{i=1}^{n-\ell-1}
\\\notag&\phantom:
= \bigset{\Pi'_i(t_n)+R_i}_{i=1}^{n-\ell-1}  -\tb_n-(e-1)s_n-1
,\end{align}
then \eqref{ea2} shows by the standard Poisson convergence of binomial
distributions that, still for
any bounded sequence $s_n$,
\begin{align}\label{ea6}
{\hXiisn(x,\infty]}&
\dto\Po\bigpar{ e^{-x}}.
\end{align}
Since
$\Xi(x,\infty]\stackrel{\rm d}{=}\Po\bigpar{e^{-x}}$,
cf.\ \eqref{EXi},
we thus obtain from \eqref{ea6}
\begin{align}\label{ea7}
\hXiisn(A)&
\dto\Xi(A)
\end{align}
for every interval $A=(x,\infty]$.

This is not quite enough to show convergence in distribution in
the space $\cN\oooox$, but it is not far from it.
Let $\cU$ be the family of all finite unions $\bigcup_1^k(u_j,v_j]$ with
$-\infty<u_j<v_j\le\infty$.
For any such set $A\in\cU$, we can use \eqref{ea2} for $x=u_j$ and $v_j$,
$j=1,\dots,k$, and conclude that
\begin{align}
  \P[\hPi_i(s_n)\in A]&
=\frac{1}{n}\bigpar{\mu(A)+o(1)},
\end{align}
where $\ddx\mu(x)=e^{-x}\dd x$ is the intensity measure of $\Xi$,
and it follows as above that
\eqref{ea7} holds for every $A\in\cU$.
This implies convergence
\begin{align}\label{ea8}
\hXiisn&
\dto\Xi
\end{align}
in $\cN\oooox$, see for example
\cite[Theorem 4.15]{Kallenberg-rm}.
(In the terminology there, $\cU$ is a dissecting ring, and we may take
$\cI=\cU$; both conditions in the theorem follow from \eqref{ea7} for
$A\in\cU$.
See also the version in \cite[Proposition 16.17]{Kallenberg}.)
Alternatively, \eqref{ea8} follows easily from \eqref{ea2} using
\cite[Corollary 4.25]{Kallenberg-rm}; we leave the details to the reader.

Define $S_n:=\tau_n-\gaa_n$, and recall from \eqref{taulim} that
\begin{align}\label{ea88}
  S_n\dto \tau,\quad{\rm with~~}~~~\tau\stackrel{\rm d}{=} \textrm{Gumbel}(\ell+1)
.\end{align}
Define also the point processes
\begin{align}\label{em0}
  \tXi_n&:=\bigset{\Pi_i(\tau_n)+R_i}_{i=1}^n,
\\\label{em00}
  \hXisn&:=\tXi_n-(e-1)S_n-\tb_n-1
=\bigset{\Pi_i(\tau_n)+R_i-(e-1)S_n-\tb_n-1}_{i=1}^n.
\end{align}
Then, by \eqref{tn} and \eqref{pip},
\begin{align}\label{em1}
  \bigpar{\tXi_n\mid S_n=s_n}
=  \bigpar{\tXi_n\mid \tau_n=t_n}
\eqd
\set{\Pi'_i(t_n)+R_i}_{i=1}^{n-\ell-1}
\cup
\set{\Pi''_j(t_n)+R_{n-j}}_{j=0}^{\ell},
\end{align}
where we for convenience let $\Pi''_n(t_n):=m+1$.
(We have only equality in distribution in \eqref{em1}, since the equality
involves a harmless relabelling of $R_1,\dots,R_n$.)
Hence, also using \eqref{em00} and \eqref{ea5},
\begin{align}\label{em2}
  \bigpar{\hXisn\mid S_n=s_n}
&
=  \bigpar{\tXi_n\mid S_n=s_n}-(e-1)s_n-\tb_n-1
\\\notag&
\eqd
\hXiisn
\cup
\bigset{\Pi''_j(t_n)+R_{n-j}-(e-1)s_n-\tb_n-1}_{j=0}^{\ell},
\end{align}
For $0\le j\le\ell$, we have $\Pi''_j(t_n)\le m+1$ and thus,
recalling $s_n=O(1)$,
\begin{align}\label{em3}
  \Pi''_j(t_n)+R_{n-j}-(e-1)s_n-\tb_n-1 = -\tb_n+O(1) \to -\infty.
\end{align}
Hence, if $A\in\cU$ is as above, then for large $n$, the final multiset in
\eqref{em2} is disjoint from $A$, and thus
\eqref{em2} shows that \eqref{ea7} holds also for
$\bigpar{\hXisn\mid S_n=s_n}$,
which, as for \eqref{ea8} above, implies
\begin{align}\label{em4}
\bigpar{\hXisn\mid S_n=s_n}
\dto \Xi
\end{align}
in $\cN \oooox$.
(Alternatively, this follows from \eqref{ea8}, \eqref{em2}, and \eqref{em3}
using \refL{L:point1}.)

We have shown \eqref{em4}
for any bounded sequence $s_n$.
Thus \refL{L2} below applies and yields
\begin{align}\label{eb1}
  \bigpar{\hXisn, S_n}\dto (\Xi,\tau),
\end{align}
with $\Xi$ and $\tau\stackrel{\rm d}{=}\Gumbel(\ell+1)$ independent.
Consequently, by \eqref{em00} and the continuous mapping theorem,
\begin{align}\label{eb2}
  \bigset{\Pi_i(\tau_n)+R_i-\tb_n-1}_1^n&
=\hXisn+(e-1)S_n
\dto \Xi+(e-1)\tau.
\end{align}
This is our continuous version of Theorems \ref{stoppedmax}--\ref{TPP},
where we have added artificial fractional parts $R_i$ in order to get a nice
limit  $\Xi+(e-1)\tau$
consisting of the Poisson process $\Xi$ with an independent random shift
$(e-1)\tau$.

To obtain
the desired conclusions about the occupancy counts
 it now remains only to remove the fractional parts.
Arrange $\tPix_i:=\Pi_i(\tau)+R_i$ in decreasing order as
$\tPix\sss1\ge\tPix\sss2\ge\dots$, and note that then
\begin{align}\label{em5}
M_{n,i}=\floor{\tPix_{(i)}}.
\end{align}
\refL{L:point1} shows that \eqref{eb2} is equivalent to,
with $\Xi=\set{\xi_i}_{i=1}^\infty$ as in \refSS{S3.1},
\begin{align}\label{eb3}
  \bigpar{\tPix\sss{i}-\tb_n-1}_{i=1}^K
\dto
\bigpar{\xi_i+(e-1)\tau}_{i=1}^K
\end{align}
for every fixed $K\ge1$.

We write as in \eqref{bc} $\tb_n=b_n+c_n$
where $b_n:=\floor{\tb_n}$ is an integer and
$c_n:=\frax{\tb_n}\in[0,1)$
is the fractional part.
Consider a subsequence such that $c_n\to \gax$ for some $\gax\in\oi$.
Then, along this subsequence, it follows from \eqref{eb3} that
\begin{align}\label{eb4}
  \bigpar{\tPi\sss{i}-b_n}_{i=1}^K
\dto
\bigpar{\xi_i+(e-1)\tau+1+\gax}_{i=1}^K.
\end{align}
The $K$ variables on the \rhs{} of \eqref{eb4} have continuous
distributions, and are thus a.s. not integers;
hence, the vector on the \rhs{} is a.s.\ a continuity point of
the mapping $F:\bbR^K\to\bbR^K$ given by
$(z_i)_1^K\mapsto(\floor{z_i})_1^K$.
Consequently, by
\cite[Theorem 5.1]{Billingsley}, we may apply this mapping $F$ and conclude,
using \eqref{em5} and \eqref{eb4}, that
\begin{align}\label{eb5}
  \bigpar{M_{n,i}-b_n}_{i=1}^K
&=
  \bigpar{\floor{\tPi\sss{i}-b_n}}_{i=1}^K
\\&\notag
\dto
\bigpar{\floor{\xi_i+(e-1)\tau+1+\gax}}_{i=1}^K
=
\bigpar{\ceil{\xi_i+(e-1)\tau+\gax}}_{i=1}^K.
\end{align}
This yields \refT{stoppedmax} by \refL{LeLem2},
again taking $d=\dtv$ and letting $G_n$ and $H_\gax$ be distributions
of the random vectors on left and right sides of \eqref{eb5}.

\refTs{stoppedmax3} and \ref{TPP} follow easily as in \refSS{S5.5}.

\subsection{A general lemma}
We used in the proof above the following simple lemma on joint convergence
using conditional distributions.  We admit this may belong to the folklore,
and give a detailed proof since we
are not aware of any explicit reference.

\begin{lemma}\label{L2}
Let $(X_n,Y_n)$, $n\ge1$, be a sequence of pairs of random variables taking
  values   in $\cX\times\cY$ for some Polish spaces $\cX$ and $\cY$.
Let $X$ be a random variable in $\cX$ and
suppose that there exists  regular conditional distributions
$\cL(X_n\mid Y_n=y)$, $y\in\cY$, such that, as \ntoo,
for any convergent sequence
$y_n\to y$ in $\cY$,
\begin{align}\label{lx1}
  (X_n\mid Y_n=y_n) \dto X
.\end{align}
Suppose further that $Y_n\dto Y$ as \ntoo, for some random
variable $Y$ in $\cY$.
Assume, as we may, that $X$ and $Y$ are independent.
Then, as \ntoo,
\begin{align}\label{lx2}
  (X_n,Y_n)\dto(X,Y).
\end{align}
\end{lemma}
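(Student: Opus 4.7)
The plan is to test against bounded continuous functions via the tower property. Fix an arbitrary $f\in C_b(\cX\times\cY)$ and set
\begin{align*}
g_n(y):=\int f(x,y)\,\cL(X_n\mid Y_n=y)(\dd x),\qquad g(y):=\E f(X,y).
\end{align*}
By the definition of regular conditional distribution, $\E f(X_n,Y_n)=\E g_n(Y_n)$; by independence of $X$ and $Y$, $\E f(X,Y)=\E g(Y)$. So the goal reduces to showing $\E g_n(Y_n)\to\E g(Y)$.

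The key pointwise lemma I would establish first is that, for every convergent sequence $y_n\to y$ in $\cY$,
\begin{align*}
g_n(y_n)\longrightarrow g(y).
\end{align*}
To prove this, let $\tilde X_n$ have distribution $\cL(X_n\mid Y_n=y_n)$; by the hypothesis \eqref{lx1}, $\tilde X_n\dto X$. Since $y_n\to y$ deterministically, the pair $(\tilde X_n,y_n)\dto(X,y)$ in $\cX\times\cY$, and since $f$ is jointly continuous and bounded, $\E f(\tilde X_n,y_n)\to\E f(X,y)$, which is precisely the claim. As a by-product, taking the constant sequence $y_n\equiv y$ (and using dominated convergence in the definition of $g$) shows that $g$ is continuous on $\cY$.

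With that in hand, I would invoke Skorokhod's representation theorem, available because $\cY$ is Polish and $Y_n\dto Y$, to produce versions $Y_n'\eqd Y_n$ and $Y'\eqd Y$ on a common probability space with $Y_n'\to Y'$ almost surely. Applying the pointwise statement to the sequence $y_n=Y_n'(\omega)$ for each $\omega$ in a set of full measure gives $g_n(Y_n')\to g(Y')$ a.s. The functions $g_n$ are Borel measurable (being integrals of a bounded Borel function against the measurable family of regular conditional probabilities) and uniformly bounded by $\|f\|_\infty$, so bounded convergence yields $\E g_n(Y_n')\to\E g(Y')$. Translating back via equality in distribution, $\E f(X_n,Y_n)=\E g_n(Y_n')\to\E g(Y')=\E f(X,Y)$, and since $f\in C_b(\cX\times\cY)$ was arbitrary, \eqref{lx2} follows.

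The main technical obstacle is modest but worth care: the pointwise step must exploit \emph{joint} continuity of $f$ (handled by coupling the random $\tilde X_n$ with the deterministic $y_n$), and the final step needs the Borel measurability and uniform boundedness of $g_n$ so that Skorokhod plus bounded convergence can be invoked. Both are standard consequences of working with regular conditional distributions on Polish spaces.
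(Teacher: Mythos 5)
Your proof is correct and follows essentially the same route as the paper: both condition on $Y_n$ via the regular conditional distributions, use the Skorokhod representation to make $Y_n\to Y$ almost sure, apply hypothesis \eqref{lx1} along the realised sequence, and finish with bounded convergence. The only (cosmetic) difference is that you test against general $f\in C_b(\cX\times\cY)$ through the kernel averages $g_n$, whereas the paper tests against rectangles $A\times B$ of continuity sets and then invokes \cite[Theorem 3.1]{Billingsley}.
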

\noindent
The assumption \eqref{lx1} means that $\cL(X_n\mid Y_n=y_n)$ converges to the
distribution $\cL(X)$. Note that the limit distribution does not depend on
$y$.
\begin{proof}
Let $A\subseteq\cX$ and $B\subseteq\cY$ be such that
$\P[X\in\partial A]=0$ and $\P[Y\in\partial B]=0$
(i.e., these are continuity sets for  $X$ and $Y$).
We have
\begin{align}\label{lx3}
  \P[(X_n,Y_n)\in A \times B]&
=\E\left[
\P[X_n\in A\mid Y_n]\indic{Y_n\in B}\right],
\end{align}
where we use the regular conditional distributions in the assumption.
By the Skorohod coupling theorem \cite[Theorem~4.30]{Kallenberg},
we may assume that $Y_n\to Y$ almost surely.
Then \eqref{lx1} implies
$\P[X_n\in A\mid Y_n]\to \P[X\in A]$ a.s.\ (since $A$ is an $X$-continuity set),
and $Y_n\to Y$ implies $\indic{Y_n\in B}\to\indic{Y\in B}$ a.s.\
(since $B$ is a $Y$-continuity set).
Consequently, \eqref{lx3} implies by the dominated
convergence theorem that
\begin{align}\label{lx4}
  \P[(X_n,Y_n)\in A \times B]&
\to\E\left[\P(X\in A)\indic{Y\in B}\right]
=\P[X\in A]\P[Y\in B]
\\\notag&
=\P[(X,Y)\in A\times B].
\end{align}
This implies \eqref{lx2} by \cite[Theorem 3.1]{Billingsley}.
\end{proof}

\section{Dynamical aspects}
\label{Sdyn}
\subsection{Small counts}
The number of empty  boxes decreases each time a box receives its first ball.
Functional limit theorems for this process in the setting of the discrete-time occupancy scheme were first obtained  by Sevastyanov
 \cite{Sev} through asymptotic analysis of the multivariate p.g.f.\
 of  the finite-dimensional distributions.
For the regime of interest here, Theorem 5 of the cited paper showed  (in a minor disguise) the  convergence to an exponential  Poisson process (see also \cite[Ch.~4 Section 5]{Kolchin}).
In the context of CCP an equivalent result was
proved quite recently
by another method  for the process of first arrivals   (see \cite[Theorem 4.3.38]{Mladen} and references therein),
although the connection with \cite{Sev} was apparently overlooked.
Ilienko \cite[Theorem 3.1]{Ilienko} used the poissonised scheme
to identify the Poisson limits for the processes of $r$-th arrivals.

We aim next to  demonstrate, in the framework of   the bi-poissonised occupancy scheme, how the time evolution of  small counts  connects to
the processes of $r$th arrivals.

Recall from  Proposition \ref{records} that  the pre-limit processes $\widehat{\rm R}_{n,r}$  of $r$-arrivals ($r\geq1$)
are nonhomogeneous Poisson point processes
with intensity measure $np_{r-1}(t){\rm d}t, ~t\geq0$.
Let
\begin{align}
\alpha_{n,r}:=L+(r-1)\log L-\log(r-1)!
\end{align}
which is  (\ref{an}) with $m=r-1$.
Employing  (\ref{ingamma}) and  (\ref{mSt}) to control the intensity measure,
we obtain  for every fixed $r\geq 1$ convergence of Poisson point processes
in the form of their counting functions
\begin{equation}\label{Rlim}
(\widehat{\rm R}_{n,r}(\alpha_{n,r} +s,\infty], ~s\in {\mathbb R})\stackrel{\rm d}{\to}(\Xi(s,\infty],~~s\in {\mathbb R}).
\end{equation}
In fact, it is easy to see that on every fixed interval $(a,\infty]$,
the intensities converge in $L^1$, and thus the intensity measures in total
variation, and \eqref{Rlim} follows.
Taking the de-poissonisation of the numbers of balls and boxes for granted,  (\ref{Rlim})  recovers the cited
results from \cite{Ilienko, Mladen}.

We stress that the $r$-arrival processes have no common asymptotic time scale,
meaning that  $\widehat{\rm R}_{n,r'}(\alpha_{n,r} +s,\infty]$
for $r'< r$ converges in probability to $0$, and  for $r'> r$ converges in probability to $\infty$.
These relations are just  features of the right $(r-1)$-domain in terms of  \cite{Kolchin}.

Sevastyanov's result on convergence of the process of empty boxes
is  equivalent to the $r=1$ instance of (\ref{Rlim}) by the  virtue of   identity
\begin{align}
N-\widehat{\mu}_{n,0}(t)=\widehat{\rm R}_{n,1}(t),
\end{align}
which holds pathwise a.s.\  for all $t\geq0$,
and just says that the number of empty boxes decreases each time some  box receives its first ball.
In this formula
$N\stackrel{\rm d}{=}{\rm Poisson}(n), ~  \widehat{\mu}_{n,0}(t) \stackrel{\rm d}{=} {\rm Poisson}(ne^{-t})$, with
$N-\widehat{\mu}_{n,0}(t)$ and $\widehat{\mu}_{n,0}(t)$ being independent for each fixed $t$.
The number of empty boxes  $(\widehat{\mu}_{n,0}(t), ~t\geq 0)$ is  a
pure-death process with unit  death rate per capita, and  $(\widehat{\rm
  R}_{n,1}(t), ~t\geq0)$ is a Poisson process with the exponential jump rate $ne^{-t}$.

To generalise for all $r\geq1$, we observe that
\begin{equation}\label{sumc}
\widehat{\mu}_{n,r-1}(t) +\sum_{k=0}^{r-2}\widehat{\mu}_{n,k}(t)=\widehat{\rm R}_{n,r}(t,\infty],~t\geq0.
\end{equation}
The  term $\widehat{\mu}_{n,r-1}(t)$ is placed separately to emphasise that  the remaining sum  is asymptotically negligible for $t=\alpha_{n,r}+O(1)$
(which corresponds to the right $(r-1)$-domain of \cite{Kolchin}).
We center and use \eqref{Rlim},
which, using that the separated term in (\ref{sumc}) dominates the rest,
yields
(again in the Skorohod space $D(-\infty,\infty]$):
\begin{proposition} For $r\in{\mathbb Z}_{>0}$,  as $n\to\infty$,
\begin{align}
(\widehat{\mu}_{n,r-1}(\alpha_{n,r}+s),~s\in{\mathbb R}) \stackrel{\rm d}{\to}  (\Xi(s,\infty],~~s\in {\mathbb R}).
\end{align}
\end{proposition}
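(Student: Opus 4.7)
The plan is to pivot on the pathwise identity \eqref{sumc}, which rewrites
\[
\widehat{\mu}_{n,r-1}(\alpha_{n,r}+s)
= \widehat{\rm R}_{n,r}(\alpha_{n,r}+s,\infty] - \sum_{k=0}^{r-2}\widehat{\mu}_{n,k}(\alpha_{n,r}+s).
\]
By \eqref{Rlim}, the main term converges in distribution in $D(-\infty,\infty]$ to the non-increasing step function $s\mapsto\Xi(s,\infty]$. It therefore suffices to show that the error term vanishes in probability, uniformly on compact $s$-intervals.

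First I would handle the pointwise asymptotics. Since $\widehat{\mu}_{n,k}(t)\eqd\Poisson(np_k(t))$, a direct calculation using \eqref{an} with $m=r-1$ gives
\[
np_k(\alpha_{n,r}+s) = \frac{(r-1)!}{k!}\,L^{k-r+1}e^{-s}\bigpar{1+o(1)},
\]
uniformly in $s$ on compacts. For $k\le r-2$ this is $O(L^{-1})=o(1)$, so each individual $\widehat{\mu}_{n,k}(\alpha_{n,r}+s)$ tends to $0$ in probability.

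The key step is the functional upgrade to uniform convergence on $[-A,A]$, needed for convergence in $D(-\infty,\infty]$. Here the mild difficulty is that $\widehat{\mu}_{n,k}$ is not monotone for $k\ge 1$: it increases by one at each atom of $\widehat{\rm R}_{n,k}$ and decreases by one at each atom of $\widehat{\rm R}_{n,k+1}$. I would dominate
\[
\sup_{s\in[-A,A]}\widehat{\mu}_{n,k}(\alpha_{n,r}+s)
\le
\widehat{\mu}_{n,k}(\alpha_{n,r}-A) + \widehat{\rm R}_{n,k}\bigl(\alpha_{n,r}-A,\alpha_{n,r}+A\bigr],
\]
since the supremum is attained after a sequence of increases from the left endpoint. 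The first summand has mean $np_k(\alpha_{n,r}-A)=O(L^{k-r+1})$ by the calculation above, and the second has mean $\int_{\alpha_{n,r}-A}^{\alpha_{n,r}+A}np_{k-1}(t)\,{\rm d}t = O(L^{k-r})$ by \refL{LSt} (or by the same Stirling estimate, since the integrand is slowly varying on the window). For $k\le r-2$ both terms vanish, and for the special case $k=0$ I would use that $\widehat{\mu}_{n,0}$ is a.s.\ non-increasing, so the supremum equals $\widehat{\mu}_{n,0}(\alpha_{n,r}-A)$, whose mean is $ne^{-\alpha_{n,r}+A}=O(L^{-(r-1)})$. Markov's inequality and a union bound over the finitely many $k$ then give
\[
\sup_{s\in[-A,A]}\,\sum_{k=0}^{r-2}\widehat{\mu}_{n,k}(\alpha_{n,r}+s)\pto 0.
\]

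Finally I would combine: a small uniform additive perturbation does not destroy convergence in $D(-\infty,\infty]$ (add the error term to both sides and apply Slutsky in the functional setting, or invoke the characterisation of $J_1$-convergence through convergence on a dense set together with no-twin-jumps, which holds here since the error is uniformly $o_{\rm p}(1)$). This yields the claimed functional limit. The main obstacle is indeed Step~4, the uniform (rather than pointwise) control of the non-monotone remainder terms; the domination above reduces it to first-moment bounds that follow from \refL{LSt}.
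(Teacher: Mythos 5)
Your proposal is correct and follows essentially the same route as the paper: the pathwise identity \eqref{sumc}, the functional convergence \eqref{Rlim} of the $r$-arrival process, and negligibility of the remaining small-count terms, combined by a Slutsky-type argument in $D(-\infty,\infty]$. The only difference is that you spell out the uniform-on-compacts control of the non-monotone remainder (via the domination by $\widehat{\mu}_{n,k}(\alpha_{n,r}-A)+\widehat{\rm R}_{n,k}(\alpha_{n,r}-A,\alpha_{n,r}+A]$ and first-moment bounds), which the paper asserts without detail.
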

\noindent
Notably,  although  $(\widehat{\mu}_{n,r}(t),~t\geq0)$ for $r>0$ is a nonmonotonic birth-death process with upward jumps occuring at rate $np_{r-1}(t)$ and downward at  rate one per capita,
in  the limit  there are only downward jumps  at times of a nonhomogeneous Poisson process.
This feature of  the occupancy scheme in  the right $(r-1)$-domain is new to our knowledge.

\subsection{Parallels with queueing theory}
For a more comprehensive  picture of the dynamics,  the  bi-poissonised model should be considered as a whole. Representing the occupancy counts as a  random measure  on the integer lattice,  in the form
\begin{align}
\sum_{r=0}^\infty  \widehat{\mu}_{n,r}(t)\delta_r
=\sum_{i=1}^{N}\delta_{\widehat{M}_{n,i}},
\end{align}
and letting the time $t\geq0$ vary defines a Markov process with values in the space of point measures $\cN \oooox$,  with the random starting state  $N\delta_0$ (compare with $n \delta_0$ for the fixed-$n$ poissonised scheme)
 and
 transitions driven by  a Poisson flow
 \begin{align}
\sum_{r=0}^\infty  \widehat{\mu}_{n,r}(t+h) \delta_r \stackrel{\rm d}{=}T_{h}
\circ\sum_{r=0}^\infty  \widehat{\mu}_{n,r}(t) \delta_r.
\end{align}

In   intuitive terms, we may  think of a box as a particle  jumping at unit probability rate, independently of other particles,  from one site on the lattice to the next on its right, with  each such  move caused by a new ball added to the box.
Site $0$ has a distinguished role as a source  emitting particles  that proceed
to enter  site $1$ at epochs of
the  Poisson process $\widehat{\rm R}_{n,1}$  with nonhomogeneous rate $ne^{-t}$.
 The configuration on ${\mathbb Z}_{>0}$ (representing the multiplicities of nonzero box counts)
 evolves like an infinite network of infinite-server queues
 connected in series   \cite{MW} (also known as  a tandem of  ${\rm M}_t\slash{\rm }{\rm M}\slash \infty$ queues).
In this context, the facts like independence of multiplicities in (\ref{mu-t-P}) and features of  the arrival processes in Proposition \ref{records} appear as specialisation of  properties of  open networks with  Poisson inputs
\cite{Kelly}.
Other way round, our large $n$ results on the occupancy problem for times $t=\alpha_{n,r}+O(1)$
(as well as results for other temporal regimes \cite{Kolchin})
admit transparent  interpretation in terms of  a heavy-traffic  approximation for the  series of infinite-server queues
  with exponential  input rate $ne^{-t}$.

A similar representation  has been used to study a continuous-time  growth process of random permutations \cite{GS}.
In that model, like in other combinatorial processes
related  to tandem networks, the focus is on convergence to stationarity
of the configuration of particles on  any fixed finite  set of sites, see  \cite{tandem} for examples.

\section{Asymptotic independence of extreme occupancy counts}

\subsection{Asymptotic independence}
The bi-poissonised multiplicities $\widehat{\mu}_{n,r}(t)$'s
for different $r$  are only independent for  every fixed $t$ (and $n$) but  not as processes: e.g.\ the larger the number of boxes with $r$ balls, the higher is the likelihood that the number of boxes with $r+1$ balls will increase in the nearest time.
Nevertheless,   Theorem \ref{stoppedmax} suggests that the   independence of
 $\Xi$ and  $\tau$ stems from  some kind of  weak dependence  of   a vector of  $K$ maximal and a vector of $K$ minimal occupancy counts for times $t=\alpha_n+O(1)$.
With the latter shown, we will  be in position
to   approximate  the joint distribution of the measure-valued process
$(\widehat{\rm M}_n^s, ~s\in{\mathbb R})$ taken together with  $\widehat{\tau}_n-\gaa_n$, thus putting
 (\ref{ThSM})  (through the bi-poissonised version of the result) in light of the theorem on continuity of compositions \cite{Whitt}.

To introduce the appropriate independence concept formally, consider a bivariate sequence of random elements $(X_n, Y_n)$ taking values in a product Polish space
$\cX \times\cY$. We say that $X_n$ and $Y_n$ are {\it asymptotically independent} if
for  independent  $X_n'$ and $Y'_n$ with $X_n'\stackrel{\rm d}{=}X_n$, $Y_n'\stackrel{\rm d}{=}Y_n$ it holds that
\begin{align}
\lim_{n\to\infty}d_{\rm TV}((X_n, Y_n), (X_n', Y_n'))=0.
\end{align}
(Then every subsequential weak limit will be a product measure.)
See \cite[Condition AI-4]{DN}) for this and weaker forms of  asymptotic independence.

We are interested in times around the instant $L=\log n$.
The covering interval with endpoints
\begin{equation}\label{trt}
t_0:=\frac{2}{3}L, \qquad
t_1:=L+\left(K-1\right)\log L
\end{equation}
will serve our purpose. The processes in the next lemma are to be considered as random elements
of the space of  cadlag functions endowed with the Skorohod topology.

\begin{lemma} For  $K>1$
and $t_0, t_1$ given by \eqref{trt}
the $K$-variate extreme-value processes
\begin{equation}\label{KminKmax}
((\widehat{M}_{n,n-i+1}(t))_{i=1}^K,~ t\in[t_0,t_1]) ~~~{\rm and~~~}   ((\widehat{M}_{n,i}(t))_{i=1}^K,~ t\in[t_0,t_1])
\end{equation}
are asymptotically independent.
\end{lemma}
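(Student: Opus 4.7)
The plan is to use bi-poissonisation together with the marking theorem to split the random collection of boxes into two independent classes, and then to show that on a high-probability event the top-$K$ order-statistic process is a functional of only one class and the bottom-$K$ process of only the other. Fix a deterministic cutoff $\theta_n := \lfloor 3L/2 \rfloor$, chosen so that $t_0 < \theta_n$ by a margin and $\theta_n/(t_1-t_0)\to 9/2$, and classify a box $i\leq N$ as Class $A$ if $\Pi_i(t_1) > \theta_n$ and as Class $B$ otherwise. The marking theorem, applied with the Poisson index process $\mathrm{N}$ on $[0,n]$ and the sample paths $\Pi_i$ as i.i.d.\ marks, implies that the two families of trajectories $\{\Pi_i\}_{i\in A}$ and $\{\Pi_j\}_{j\in B}$ are independent.

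Two high-probability containments must then be checked. By pathwise monotonicity every Class $B$ box satisfies $\Pi_j(t) \leq \theta_n$ on $[0, t_1]$, so combined with monotonicity of $t\mapsto\widehat M_{n,K}(t)$ the inclusion of the top $K$ in Class $A$ on $[t_0, t_1]$ reduces to $\widehat M_{n,K}(t_0) > \theta_n$, which follows from Stirling's formula giving $\log(n\,\overline P_{\theta_n}(t_0))\to +\infty$. Second, to exclude any Class $A$ box from the bottom $K$, fix a constant $\Theta \geq K$; independence of the Poisson increments across $[0,t_0]$ and $[t_0, t_1]$ together with the Chernoff--Bennett tail bound yield
\begin{align*}
\P[\Pi_i(t_0) \leq \Theta,\, \Pi_i(t_1) > \theta_n] \leq (\Theta+1)\,\overline P_{\theta_n - \Theta}(t_1 - t_0) \leq e^{-c'L},
\end{align*}
with $c' = \phi(9/2)/3 > 1$ and $\phi(x) := x\log x - x + 1$. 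The expected number of Class $A$ boxes with $\Pi_i(t_0) \leq \Theta$ is thus at most $n(\Theta+1)e^{-c'L} = o(1)$, so w.h.p.\ every Class $A$ box has $\Pi_i(t) \geq \Pi_i(t_0) > \Theta$ for all $t \in [t_0, t_1]$. Since the $K$-th smallest count at $t_1$ is $\leq \Theta$ w.h.p.\ (using $np_K(t_1) \to \infty$), monotonicity in $t$ of this order statistic extends the bound to the whole interval and forces the bottom $K$ throughout $[t_0, t_1]$ to lie in Class $B$.

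On the intersection $E$ of the above high-probability events, the top-$K$ process $X_n := (\widehat M_{n,i}(\cdot))_{i=1}^K$ is a measurable functional of only $\{\Pi_i\}_{i\in A}$ and the bottom-$K$ process $Y_n := (\widehat M_{n, n-i+1}(\cdot))_{i=1}^K$ a measurable functional of only $\{\Pi_j\}_{j\in B}$. Defining $X_n'$, $Y_n'$ as the same functionals applied to the respective class trajectories without restriction to $E$, the marking theorem gives $X_n' \perp Y_n'$, while $\P\bigpar{(X_n,Y_n) = (X_n',Y_n')} \geq \P(E) \to 1$ implies $d_{\rm TV}\bigpar{(X_n,Y_n), (X_n',Y_n')}\to 0$. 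Combined with the marginal coupling bounds $d_{\rm TV}(X_n,X_n'), d_{\rm TV}(Y_n,Y_n') = o(1)$, this yields the asymptotic independence in the sense of the definition given just before the lemma.

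The main difficulty is controlling the ``slow-starting'' Class $A$ boxes: a box with large terminal count $\Pi_i(t_1) > \theta_n$ but small $\Pi_i(t_0)$ would enter the top $K$ late in $[t_0, t_1]$ and the bottom $K$ early on, coupling the two extremal processes. Ruling this scenario out by a union bound over the $\sim n$ boxes forces the Chernoff rate $\phi(\theta_n/(t_1-t_0))$ to exceed $t_1/(t_1-t_0) \approx 3$, dictating a choice of $\theta_n$ substantially above $t_0$; with $\theta_n = \lfloor 3L/2 \rfloor$ we have $\theta_n/(t_1-t_0)\to 9/2$ and $\phi(9/2) \approx 3.27 > 3$, which is just enough margin.
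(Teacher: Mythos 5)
Your proof is correct, but it follows a genuinely different route from the one in the paper. You classify boxes by their \emph{terminal} count, $\Pi_i(t_1)>\theta_n$ versus $\Pi_i(t_1)\le\theta_n$ with a mid-level cutoff $\theta_n\approx\tfrac32 L$ lying strictly between the typical minima and maxima, get independence of the two classes from the marking theorem, and rule out the coupling scenario (a box low at $t_0$ that climbs above $\theta_n$ by $t_1$) by a per-box Poisson Chernoff bound plus a union bound, which works because $\tfrac13\varphi(9/2)\approx1.09>1$; the companion containments (top $K$ above $\theta_n$ from $t_0$ on, bottom $K$ below a constant $\Theta$ throughout) are first-moment computations, and the conclusion is the explicit total-variation triangle inequality. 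The paper instead splits the multiplicities at the occupancy level $K$ into ``small'' boxes $S(t)=(\widehat\mu_{n,r}(t))_{r\le K}$ and ``big'' ones, further decomposes the big block as $B=B'+B''$ according to whether the box was already big at $t_0$, uses independence of $(S(t))_{t\ge t_0}$ and $(B'(t))_{t\ge t_0}$ under the Poisson flow, and kills the overtakers $B''$ by noting that w.h.p.\ fewer than $n^{1/2}$ boxes are small at $t_0$ and that their maximal growth over time $t_1-t_0<L/2$, bounded via Theorem~\ref{fixedmax} and \eqref{va2} by roughly $(e+\eps)L/2$, stays below $\widehat M_{n,K}(t_0)\ge\tfrac23(e-\eps)L$; the final step is delegated to \cite[Proposition 3]{DN}. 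Your argument is more self-contained (no appeal to Theorem~\ref{fixedmax} or to sub-sample maxima) but hinges on a numerically tight constant ($\varphi(9/2)/3>1$ with margin about $0.09$), and strictly you should state that the asymptotics of $t_1-t_0=\tfrac13 L+(K-1)\log L$ only perturb the exponent by $O(\log L)$ so the margin survives; the paper's choice of comparing against already-proved maxima asymptotics buys robustness of constants at the price of invoking earlier results. Two cosmetic points: the factor $(\Theta+1)$ in your increment bound is unnecessary (the event already forces the increment over $(t_0,t_1]$ to exceed $\theta_n-\Theta$), and when you assert $\widehat M_{n,K}(t_0)>\theta_n$ w.h.p.\ you should note that in the bi-poissonised model the number of boxes exceeding $\theta_n$ at $t_0$ is exactly Poisson with mean $n\overline P_{\theta_n}(t_0)\to\infty$, which is what turns your Stirling estimate into the high-probability statement.
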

\begin{proof}
For the time being
let us regard boxes with at most $K$ balls as `small'
 and the others as `big'.
Accordingly, we
 split the sequence of  multiplicities
into two blocks
\begin{align}
S(t)=(\widehat{\mu}_{n,r}(t))_{r=0}^K,
\qquad B(t)=(\widehat{\mu}_{n,r}(t))_{r=K+1}^\infty,
\end{align}
which  for every fixed $t$ are independent,
and stem from two complementary collections of boxes.

The  block $S(t)$ of small box multiplicities is a Markov process, whose
 lifetime
until absorption at zero is $L+K\log L+\Op(1)$,  in consequence of
 the discussion around   the centering constant  (\ref{an}) (now with $K$ assuming the role of  $m$).
The lifetime exceeds $t_1$ by  $\log L+\Op(1)$, hence at  time $t_1$  w.h.p.\ the number of small boxes is at least $K$ and, by monotonicity, the $K$ minimal occupancy counts for all $t\leq t_1$ are due to small
boxes. So we are reduced  to show that the process of small box multiplicities on $[t_0,t_1]$
is asymptotically independent of the $K$ maximal box occupancy counts.

To that end, for times $t\geq t_0$
we further decompose the process of big boxes as
\begin{align}
B(t)=B'(t)+B''(t),
\end{align}
where $B'(t)$ is the sequence of multiplicities representing occupancy counts of  those big boxes
that contained more than $K$ balls already at time $t_0$, and $B''(t)$  appears due to the increase of the content of small boxes.
Independence of the blocks at $t_0$ and the Poisson flow dynamics entail that the processes
 $(S(t),\, t\geq t_0)$ and $(B'(t),\, t\geq t_0)$ are independent.
It remains to show that
for the  range $t\in[t_0, t_1]$, the nonzero multiplicities in $B(t)$
that account for the $K$ maximal box occupancy counts
coincide w.h.p.\ with their counterparts in  $B'(t)$.
That is to say, we assert that
 boxes small at time $t_0$ are unlikely to overtake the largest ones at later stages up to time $t_1$.

Indeed, by Theorem \ref{fixedmax} (with $m=0$) and \eqref{va1},
for every $\varepsilon>0$ w.h.p.
\begin{align}
  \label{va2}
(e-\varepsilon)L <\widehat{M}_{n,K}(L)
\leq \widehat{M}_{n,1}(L)<(e+\varepsilon)L,
\end{align}
 where we recall $L=\log n$.
Hence, by monotonicity  for $t\geq t_0$, also
\begin{equation}
\label{tcom}
\frac{2}{3}(e-\varepsilon)L < \widehat{M}_{n^{2/3},K}(t_0)
\le\widehat{M}_{n,K}(t_0)
\le\widehat{M}_{n,K}(t).
\end{equation}
On the other hand, the total number
$\sum_{r=0}^K\widehat{\mu}_{n,r}(t_0)$
of
 small boxes existing at time $t_0$  has a Poisson distribution with mean
\begin{align}
 nP_K(t_0)\leq ne^{-t_0}t_0^K<n^{1/3} L^K.
\end{align}
Hence it satisfies
$\sum_{r=0}^K\widehat{\mu}_{n,r}(t_0)<n^{1/2}$ w.h.p., which implies that
  the maximum number of balls any of these boxes can contain
at  a   later  time $t$
(i.e., the index of the largest nonzero component of $B''(t)$) for
$t_0\leq  t\leq t_1<  t_0+L/2$   does not exceed $K+J$, where
$J\stackrel{\rm d}{=}\widehat{M}_{n^{1/2},1}(L/2)$.
By \eqref{va2}
\begin{align}
\widehat{M}_{n^{1/2},1}(L/2)
<(e+\varepsilon)\frac{L}{2}< \frac{2}{3}(e-\varepsilon)L
\end{align}
w.h.p.\  for $\varepsilon<e/7$. Comparing with  (\ref{tcom}) yields
$K+J<\widehat{M}_{n,K}(t_0)$ w.h.p., which
shows the claim above that w.h.p.\
for $t\in[t_0,t_1]$ the $K$ largest box occupancy counts are not represented.
by $B''(t)$.
Thus,  in (\ref{KminKmax})  the $K$-variate minimal process coincides
w.h.p.\ with the $K$ minimal counts contributing to
$S$,  and the maximal process coincides w.h.p.\ with the $K$ maximal counts contributing to $B'$, where $S$ and $B'$ are independent.
The proof is completed by appealing to \cite[Proposition 3]{DN} which ensures the asserted asymptotic independence.
\end{proof}

\begin{corollary}\label{C6.2}
$\htau_n {\rm ~ and~} ((\widehat{M}_{n,i}(t))_{i=1}^K,~ t\geq 0)$
are asymptotically independent for each  $K\in {\mathbb Z}_{>0}$.
\end{corollary}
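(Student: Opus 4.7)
The plan is to deduce the corollary from the preceding lemma combined with the stopping-time asymptotics $\htau_n - \gaa_n \dto \tau$ of \eqref{taulim}.

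First, I would reduce to the case $K \ge \max(\ell+1, m+2, 2)$. Indeed, for any $K' \ge K$ the $K$-variate max-process is obtained from the $K'$-variate one by projecting onto the first $K$ coordinates, so asymptotic independence of $\htau_n$ from the $K'$-variate process implies the same for the $K$-variate one. With $K$ so enlarged and $t_0, t_1$ taken from \eqref{trt}, one checks $t_1 - \gaa_n \sim (K-1-m)\log L \to \infty$ and $t_0 - \gaa_n \to -\infty$, whence $\htau_n \in (t_0, t_1)$ w.h.p. The stopping condition in \eqref{taun1} involves only the bottom $\ell+1 \le K$ order statistics, so on this event $\htau_n$ is a measurable function of the $K$-variate min-process
\begin{equation*}
\cM_n^- := \bigpar{(\widehat M_{n, N-i+1}(t))_{i=1}^K, ~t \in [t_0, t_1]}.
\end{equation*}

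By the preceding lemma, $\cM_n^-$ is asymptotically independent of the $K$-variate max-process $\cM_n^+ := ((\widehat M_{n,i}(t))_{i=1}^K, t \in [t_0, t_1])$. The remaining task is to promote this to asymptotic independence of $\htau_n$ from the entire trajectory $\cM_n := ((\widehat M_{n,i}(t))_{i=1}^K, t \ge 0)$. I would follow the decomposition used in the proof of the lemma: partition the $N$ boxes into those \emph{big at $t_0$} (with more than $K$ balls) and those \emph{small at $t_0$}; by the marking theorem, the two collections of trajectories are independent, and $\cM_n^-$ is measurable with respect to the small-at-$t_0$ boxes alone. It then suffices to show that $\cM_n$ is, up to a vanishing event, a measurable functional of the big-at-$t_0$ trajectories. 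For $t \ge t_0$ this follows exactly as in the lemma: monotonicity of the order statistics together with $\widehat M_{n,K}(t_0) \gg K$ w.h.p.\ confines the top-$K$ boxes at time $t \ge t_0$ to the big-at-$t_0$ class.

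The main obstacle is the range $t \in [0, t_0]$, where in principle some box small at $t_0$ could belong to the top $K$ at an earlier time of low overall occupancy. My plan to handle this is to fix an arbitrary compact interval $[0, T]$ (convergence in the Skorohod topology being tested on compacts) and split $[0, t_0] = [0, \eps] \cup [\eps, t_0]$ for a small fixed $\eps > 0$. On $[\eps, t_0]$ one has $\widehat M_{n,K}(\eps) > K$ w.h.p.\ by a routine Poisson estimate, so the monotonicity argument from the lemma extends to $[\eps, t_0]$ and the top-$K$ boxes on this interval are again among the big-at-$t_0$ class. On the initial segment $[0, \eps]$ the trajectory takes values in a bounded set; by first letting $n \to \infty$ and then $\eps \to 0$, this portion can be coupled with a corresponding piece built from the big-at-$t_0$ boxes alone with error $o(1)$. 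Combining these ingredients with \cite[Proposition 3]{DN}, in the spirit of the lemma's proof, should yield the asserted asymptotic independence.
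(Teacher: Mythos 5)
The first half of your argument is precisely the paper's proof: enlarge $K$ (so that $t_0<\htau_n<t_1$ w.h.p.\ and $\ell+1\le K$), note that on this event $\htau_n$ is measurable with respect to the $K$-variate minimal process on $[t_0,t_1]$, and combine the preceding lemma with \cite[Proposition 3]{DN}. The paper in fact stops there: it introduces the truncated stopping time $\tau'_n=(\htau_n\vee t_0)\wedge t_1$, which is adapted to the minimal process in \eqref{KminKmax}, applies the Davydov--Novikov proposition to it and then once more to pass from $\tau'_n$ to $\htau_n$; it never claims (nor later needs, since the composition argument only involves times $\gaa_n+O(1)$) that the maximal process outside the window $[t_0,t_1]$ is a functional of the big-at-$t_0$ boxes.

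The genuine gap is in your upgrade to the whole half-line. The step ``for $t\ge t_0$ this follows exactly as in the lemma'' is not right: the lemma's comparison is tied to the window $[t_0,t_1]$ --- it uses $t_1<t_0+L/2$ to bound the number of balls a small-at-$t_0$ box can hold by $K+J$ with $J\eqd\widehat M_{n^{1/2},1}(L/2)$, and compares this with $\widehat M_{n,K}(t_0)\ge\tfrac23(e-\eps)L$. Monotonicity together with $\widehat M_{n,K}(t_0)>K$ gives nothing for $t>t_1$, because small-at-$t_0$ boxes keep receiving balls. In fact the assertion that $\cM_n$ is, up to a vanishing event, a measurable functional of the big-at-$t_0$ trajectories is false on $[0,\infty)$: for each fixed $n$ the difference between the counts of a small-at-$t_0$ box and any big box evolves after $t_0$ as a recurrent mean-zero walk, and one checks (e.g.\ by a conditional Borel--Cantelli argument along a rapidly growing sequence of times) that a.s.\ some small-at-$t_0$ box eventually climbs into the top $K$; so the exceptional event has probability tending to one, not zero. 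A correct treatment of $t>t_1$ would need a different idea (for instance, conditioning on the configuration at $t_1$ and using independence of the post-$t_1$ increments), or one simply restricts to $[t_0,t_1]$ as the paper does. Likewise, handling $[0,\eps]$ by ``first $n\to\infty$, then $\eps\to0$'' does not mesh with the total-variation definition of asymptotic independence, which requires a single coupling of the entire object for each $n$; iterated limits of restrictions to compacts do not deliver this. (Your monotonicity observation on $[\eps,t_0]$, by contrast, is sound: any box in the top $K$ there has more than $K$ balls, hence is big at $t_0$.)
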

\begin{proof} We may assume $K\geq m+2$ to have  w.h.p.\ $t_0<\widehat{\tau}_n<t_1$,
for the bounds defined in (\ref{trt}).
The  truncated   stopping time $\tau'_n= (\widehat{\tau}_{n}\vee t_0)   \wedge t_1$
is adapted to the minimal process in  (\ref{KminKmax}), hence asymptotically independent
of the maximal process. Since $\tau'_n= \widehat{\tau}_{n}$ w.h.p. we can apply \cite[Proposition 3]{DN} again.
\end{proof}

\subsection{Stopped maxima via  continuity of compositions}
The value of a random process at a random time is sometimes referred to as composition.
The continuity of compositions theorems  connect convergence of such evaluations
with the convergence of underlying processes and times.
We sketch the ingredients needed for an alternative proof of Theorem \ref{stoppedmax}
following  this thread.

Firstly, we have observed a weak convergence of ${\tau}_n-\gaa_n$ to some  random variable $\tau$.
As a next step,  with a minor extra effort   Theorem \ref{fixedmax} extends
to a functional approximation result in the sense of Lemma \ref{LeLem2}. In particular,
for $n$ running along a subsequence of integers  with $c_n\to c_0\in [0,1]$, the functional convergence
\begin{equation}\label{fcon}
({\rm M}_{n}^s, s\in {\mathbb R})\stackrel{\rm d}{\to} (\Xi^\uparrow_{(e-1)s+c_0}, s\in{\mathbb R}),
\end{equation}
follows from the marginal convergence for each fixed $s=s_0$ and the fact that both processes are driven by the same Poisson flow.
Convergence  (\ref{fcon})
and the asymptotic independence in Corollary \ref{C6.2} allow one to control the joint distribution to show
that
\begin{equation}\label{jconv}
\left(\tau_n-\gaa_n, ({\rm M}_n^s, ~s\in {\mathbb R})\right)\stackrel{\rm d}{\to}
\left(\tau, (\Xi^\uparrow_{(e-1)s+c_0},~ s\in{\mathbb R})\right),
\end{equation}
where $\tau$ and the limit process are independent.
The composition theorem from
\cite[Corollary 13.3.2, p.~433]{Whitt} now applies to
yield convergence of  the stopped  point process
\begin{align}
{\rm M}_n^{\tau_n-\gaa_n} \stackrel{\rm d}{\to}
\Xi^\uparrow_{(e-1)\tau+c_0}
\end{align}
along the subsequence.
Finally,  the full extent of Theorem \ref{stoppedmax} with oscillatory asymptotics obtains by Lemma \ref{LeLem2}.

A version of the following formula for the joint distribution of stopped
maximum and its multiplicity was stated in \cite[Theorem 12]{Ivch1}  without proof.
\begin{corollary}\label{joint-mm} For fixed $k\in {\mathbb Z}$, $j\in{\mathbb Z}_{>0}$ and $M_n:=M_{n,1}(\tau_n)$,
\begin{align}
&{\mathbb P}[M_{n}-b_n=k,~\mu_{n, b_n+k}(\tau_n)=j]=\\\notag&\qquad
\int_{-\infty}^\infty p_0\left(e^{(e-1)s+ c_n-k}\right)p_j\left((e-1)e^{(e-1)s+c_n-k}\right)  e^{-s}p_\ell(e^{-s}){\rm d}s+o(1).
\end{align}
\end{corollary}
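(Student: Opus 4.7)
The plan is to derive the formula as a direct consequence of \refT{TPPs}, which couples the stopped point process of centered occupancy counts to the lattice exponential Poisson process $\Xi^\uparrow_{(e-1)\tau+c_n}$ in total variation. The key observation is that, for $j\ge 1$, the event
\begin{equation*}
\set{M_n-b_n=k,\ \mu_{n,b_n+k}(\tau_n)=j}
= \set{{\rm M}_n^{\tau_n-\gaa_n}(\{k\})=j}\cap\set{{\rm M}_n^{\tau_n-\gaa_n}((k,\infty])=0}
\end{equation*}
is measurable with respect to the restriction ${\rm M}_n^{\tau_n-\gaa_n}|_{[k,\infty]}$. Invoking \refT{TPPs} with $r=k$ therefore yields
\begin{equation*}
\P[M_n-b_n=k,\ \mu_{n,b_n+k}(\tau_n)=j]
= \P\bigsqpar{\Xi^\uparrow_{(e-1)\tau+c_n}(\{k\})=j,\ \Xi^\uparrow_{(e-1)\tau+c_n}((k,\infty])=0} + o(1).
\end{equation*}

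Next, I would evaluate the right-hand side by conditioning on $\tau=s$. Given $\tau=s$, $\Xi_{(e-1)s+c_n}$ is a Poisson process on $\bbR$ with intensity $e^{(e-1)s+c_n-x}\ddx x$; the multiplicity of $\Xi^\uparrow_{(e-1)s+c_n}$ at $\{k\}$ counts its atoms lying in $(k-1,k]$, while its mass on $(k,\infty]$ counts its atoms in $(k,\infty)$. By \eqref{lxx1} and \eqref{lxx1s}, these are independent Poisson variables with parameters $(e-1)e^{(e-1)s+c_n-k}$ and $e^{(e-1)s+c_n-k}$ respectively, so the conditional joint probability factors as
\begin{equation*}
p_0\bigpar{e^{(e-1)s+c_n-k}}\,p_j\bigpar{(e-1)e^{(e-1)s+c_n-k}}.
\end{equation*}

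Finally, integrating against the density $e^{-s}p_\ell(e^{-s})$ of $\tau\stackrel{\rm d}{=}\Gumbel(\ell+1)$ from \eqref{ell-Gum} (which is independent of $\Xi$) produces the claimed formula. The only step requiring care is the verification that the event under study is measurable with respect to the restricted point process; this is automatic because both the multiplicity at $k$ and the count on $(k,\infty]$ involve only positions in $[k,\infty]$, so no further estimation beyond what \refT{TPPs} already supplies is needed.
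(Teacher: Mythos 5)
Your proposal is correct and follows essentially the same route as the paper: transfer the event to the approximating process $\Xi^\uparrow_{(e-1)\tau+c_n}$ (the paper leaves this step implicit, you justify it via \refT{TPPs} with $r=k$), then condition on $\tau=s$, use the independent Poisson counts on $\{k\}$ and $(k,\infty]$ with parameters from \eqref{lxx1}--\eqref{lxx1s}, and integrate against the ${\rm Gumbel}(\ell+1)$ density. No gaps.
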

\begin{proof}
We need to compute the analogous probability for the stopped approximating process $\Xi^\uparrow_{(e-1)\tau+c_n}$.
Recalling the intensity measure (\ref{lxx1})--(\ref{lxx1s}) we obtain
 ${\mathbb P}[\Xi^\uparrow_{b}[k+1,\infty]=0]=p_0(e^{b-k})$ and    ${\mathbb P}[\Xi^\uparrow_{b}(\{k\})=j]=p_j((e-1)e^{b-k})$,
for the events which determine a $j$-fold rightmost atom at location $k$.
Conditionally on $\tau=s$, we multiply these probabilities while setting
 $b=(e-1)s+c_n$, then   integrate in $s$  over  the Gumbel$(\ell+1)$ density of $\tau$
given in \eqref{sa3}.
\end{proof}

\section{Exponential tail estimates and moments}\label{Smom}
\noindent
We proceed with uniform in (large)
$n$ exponential tail estimates for the maximal stopped occupancy counts in the fixed-$n$ poissonised scheme.
Apparently the underlying  light-tail phenomenon has not been given due attention in the literature.
We take therefore first  a wider view on maximal order statistics, complementing the established theory found  in \cite{Mikosch, Reiss}.

\subsection{The general setting}\label{GenS}
Consider  a sequence of distribution functions $F_n$  on ${\mathbb R}_+$,  and let  $X_{n,1}\geq \cdots \geq X_{n,n}$
be an  ordered i.i.d.\  sample from    $F_n$.  Suppose  it is possible to choose   an approximate upper $1/n$ quantile, i.e.\ to find $x_n$ satisfying
\begin{equation}\label{bsf}
c_-n^{-1}\leq\overline{F}_n(x_n)\leq c_+ n^{-1},
\end{equation}
where  $\overline{F}_n:=1-F_n$ and  $c_-, c_+$ are some positive constants. If $F_n$ is continuous then, of course,
a $1/n$  quantile can be chosen exactly. Assuming that
\begin{equation}\label{Tail-F}
\frac{\overline{F}_n(x+y)} {\overline{F}_n(x)}\leq C e^{-cy}  {\rm~~~\quad  for~\quad~} \theta x_n\leq x\leq x_n~\quad~~ {\rm and ~\quad~~}y\geq0,
\end{equation}
with some positive constants $c, C$, and $\theta\in(0,1)$,
we wish to conclude on  a similar tail estimate for  the centered  statistic $X_{n,i}-x_n$ with fixed $i$.

\begin{lemma} \label{Le-Tails}
Under the assumptions {\rm (\ref{bsf}) } and {\rm (\ref{Tail-F})}, for $y\geq0$
and fixed $i\ge1$,
\begin{eqnarray}
  {\mathbb P}[X_{n,i}-x_n>y]
&\leq&
 (c_+ C) e^{-c y},
\label{upTail}
\\
\label{doTail}
{\mathbb P}[X_{n,i}-x_n\leq -y]
& \leq& C_2 e^{-c_2e^{c(1-\theta) y}},
\end{eqnarray}
with some constant $C_2>0$ and $c_2=c_-/(2C)$.
\end{lemma}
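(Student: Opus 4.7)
The plan is to treat the two tails separately; both reduce to binomial tail estimates after exploiting the one-sided control on $\overline F_n$ near $x_n$ afforded by \eqref{bsf} and \eqref{Tail-F}.

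For the upper tail \eqref{upTail} I would simply use a union bound and \eqref{Tail-F} at $x = x_n$. Namely,
\begin{align*}
\mathbb{P}[X_{n,i} - x_n > y] \le \mathbb{P}[X_{n,1} > x_n + y] \le n\,\overline F_n(x_n+y) \le n \cdot C e^{-cy}\,\overline F_n(x_n) \le c_+ C\, e^{-cy},
\end{align*}
using \eqref{bsf} in the last step. The index $i$ plays no role since $X_{n,i} \le X_{n,1}$.

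For the lower tail \eqref{doTail} the idea is to write $\mathbb{P}[X_{n,i} \le x_n - y] = \mathbb{P}[B_{n,y} \le i-1]$, where $B_{n,y} := \#\{j \le n : X_j > x_n - y\}\sim\mathrm{Binomial}(n,p_y)$ with $p_y := \overline F_n(x_n - y)$, and then to bound $np_y$ from below and apply a standard Chernoff tail for the binomial. The key step is the lower bound on $p_y$. If $y \le (1-\theta) x_n$, then $\theta x_n \le x_n - y$, so \eqref{Tail-F} applied with $x = x_n - y$ gives $\overline F_n(x_n) \le Ce^{-cy}\overline F_n(x_n - y)$, hence $np_y \ge c_- e^{cy}/C$. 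If instead $(1-\theta)x_n < y \le x_n$, monotonicity of $\overline F_n$ and the previous bound at $y = (1-\theta)x_n$ yield $np_y \ge c_-e^{c(1-\theta)x_n}/C$. Combining, for every $0 \le y \le x_n$ one has
\begin{equation*}
np_y \ge \frac{c_-}{C}\,e^{c\min(y,(1-\theta)x_n)} \ge \frac{c_-}{C}\,e^{c(1-\theta)y},
\end{equation*}
since $(1-\theta)y \le \min(y,(1-\theta)x_n)$ whenever $y\le x_n$. For $y > x_n$ the claim is vacuous because $X_{n,i} \ge 0$.

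With $\mu_y := np_y$ in hand, a standard Chernoff bound for the binomial lower tail gives $\mathbb{P}[B_{n,y} \le \mu_y/2] \le e^{-\mu_y/8}$, whence $\mathbb{P}[X_{n,i}\le x_n - y] \le e^{-\mu_y/8}$ as soon as $\mu_y \ge 2i$, giving \eqref{doTail} with $c_2 = c_-/(8C)$. In the complementary regime where $\mu_y < 2i$, the quantity $e^{c(1-\theta)y}$ is bounded by a constant depending only on $i, c_-, C$, and $y$ itself is bounded; here one simply absorbs this small range into the constant $C_2$ so that the right-hand side of \eqref{doTail} exceeds $1$ trivially.

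The only real obstacle is the piecewise nature of the lower bound on $p_y$: the tail hypothesis \eqref{Tail-F} only holds down to $x = \theta x_n$, so the exponent $cy$ one would naively expect is cut off at $c(1-\theta)x_n$, and this is precisely why the double-exponential rate in \eqref{doTail} degrades from $e^{cy}$ to $e^{c(1-\theta)y}$. Once this is recognized, the rest is routine binomial concentration; I would write out the Chernoff step explicitly (e.g., via the bound $\mathbb{P}[\mathrm{Binomial}(n,p) \le \mu/2]\le \exp(-\mu/8)$) and absorb all small-$y$ issues into the constant $C_2$.
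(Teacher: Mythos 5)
Your argument is correct in substance and follows essentially the same route as the paper: the union bound with \eqref{Tail-F} at $x=x_n$ for the right tail, and for the left tail the identity $\P[X_{n,i}\le x_n-y]=\P[\mathrm{Binomial}(n,\overline F_n(x_n-y))\le i-1]$ combined with the inverted bound $\overline F_n(x_n-y)\ge C^{-1}e^{cy}\overline F_n(x_n)$ for $0\le y\le(1-\theta)x_n$ and monotonicity to handle $(1-\theta)x_n< y\le x_n$; your unified lower bound $n\overline F_n(x_n-y)\ge (c_-/C)e^{c(1-\theta)y}$ is just a cleaner packaging of the paper's two-range argument, and your treatment of the small-$\mu_y$ regime and of $y>x_n$ is fine.

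The one place where you fall short of the statement is the constant: the lemma asserts $c_2=c_-/(2C)$, whereas the off-the-shelf Chernoff step $\P[B\le\mu/2]\le e^{-\mu/8}$ only yields $c_2=c_-/(8C)$, as you yourself note. To recover the stated value, bound the binomial lower tail directly as the paper does: with $p:=C^{-1}e^{cy}\overline F_n(x_n)\le1$ on the range $0\le y\le(1-\theta)x_n$, use $\binom nj p^j(1-p)^{n-j}\le \frac{(np)^j}{j!}e^{-(n-j)p}\le \frac{(np)^j e^j}{j!}e^{-np}$ and $np\ge (c_-/C)e^{cy}=2c_2e^{cy}$, so the sum over $j\le i-1$ is at most a constant times $e^{ciy-2c_2e^{cy}}\le C_2e^{-c_2e^{cy}}$, the factor $2$ in $2c_2$ being exactly the room needed to absorb the polynomial factor $e^{ciy}$. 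For the paper's subsequent use only the two-sided bound \eqref{2sidedB} with unspecified constants matters, so your weaker $c_2$ would in fact suffice there, but as stated the lemma pins down $c_2=c_-/(2C)$ and your proof should be adjusted accordingly.
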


\begin{proof}
For the right tail estimate we only need the upper bound in (\ref{bsf})
and that  (\ref{Tail-F}) holds with $x=x_n$. Granted that, we have for $y>0$
\begin{align}
  {\mathbb P}[X_{n,i}-x_n>y]&\leq  {\mathbb P}[X_{n,1}-x_n>y]=1- F_n^n(x_n+y)
\leq
n\overline{ F}_n(x_n+y)
\\\notag&
\leq n \overline{F}_n (x_n) C e^{-c y}  \leq  (c_+ C) e^{-c y}.
\end{align}

The left tail requires more effort. For $0\leq  y\leq x_n$ write the exact formula
\begin{equation}\label{bs1}
{\mathbb P}[X_{n,i}-x_n\leq - y]=\sum_{j=0}^{i-1} {n\choose  j}\overline{F}_n^j(x_n-y)F_n^{n-j}(x_n-y).
\end{equation}
To bound this sum from the above we recall that the binomial distribution is stochastically increasing
as  the success probability increases; therefore it is enough to estimate $\overline{F}_n(x_n-y)$ from below. Inverting
(\ref{Tail-F}),
we find
\begin{equation}\label{Tail-FI}
\frac{\overline{F}_n(x_n-y)} {\overline{F_n}(x_n)} \geq    C^{-1} e^{cy},
\qquad 0\leq y\leq (1-\theta)x_n.
\end{equation}
Hence,
 noting for $0\leq y\leq (1-\theta)x_n$ that $C^{-1} e^{cy}   \overline{F}_n(x_n)\le 1$ by \eqref{Tail-FI},
we obtain for
the binomial sum in (\ref{bs1})  an upper bound
\begin{align}\label{va3}
\sum_{j=0}^{i-1} \binom{n}{j} & \big(C^{-1} e^{cy}   \overline{F}_n(x_n)\big)^j        \big(1-C^{-1} e^{cy}   \overline{F}_n(x_n)\big)^{n-j}
\\\notag&
\leq
\sum_{j=0}^{i-1}\frac{1}{C^jj!} \big(  n\overline{F}_n(x_n)\big)^j    e^{cjy}
\exp\left(-(n-j)\overline{F}_n(x_n) C^{-1} e^{cy}\right)
\\\notag&
\leq
\sum_{j=0}^{i-1}\frac{c_1^je^j}{ j!}  \exp\left(c jy  - 2c_2e^{cy}\right)
\leq
C_1  \exp\left(c iy  - 2c_2e^{cy}\right)
\leq C_2 e^{-c_2 e^{cy}}
\end{align}
where $c_1=c_+/C$, $c_2=c_-/(2C)$, and $C_1,C_2$ are some constants;
the final inequality holds since $\sup (az-e^{cz})<\infty$
for every $a,c>0$.
This implies \eqref{doTail} for $0\le y\le(1-\theta)x_n$.
In particular, the bound \eqref{va3} is valid for the cutoff $y=(1-\theta)x_n$. Therefore,
for the remaining range $(1-\theta)x_n\leq y\leq x_n$
we have
\begin{equation}\label{bs}
{\mathbb P}[X_{n,i}-x_n\leq -y]\leq {\mathbb P}[X_{n,i}-x_n\leq -(1-\theta)x_n]\leq  C_2 e^{-c_2e^{c(1-\theta) x_n}} \leq C_2 e^{-c_2e^{c(1-\theta) y}}.
\end{equation}
Consequently, combining \eqref{va3} and \eqref{bs},
we obtain (\ref{doTail}).
\end{proof}
The striking asymmetry between the right and left tails is partly explained by  a similar behaviour of the maximal point in
the exponential process $\Xi$,
 which has the Gumbel distribution \eqref{gumbel}.
Even so the above estimates do not presume approximability or
 convergence of $X_{n,i}$'s in distribution.
Replacing the double exponent in (\ref{doTail}) by a weaker exponential bound,  we have
\begin{equation}\label{2sidedB}
{\mathbb P}[|X_{n,i}-x_n|>y]\leq C_0 e^{-c y}
\end{equation}
with suitable $C_0>0$. This two-sided estimate will be sufficient for our purposes,
but see Lemma \ref{LT} below.

\subsection{Gamma and Poisson examples}\label{GP} We illustrate the obtained tail bounds for maximal order statistics
 in two examples relevant to our stopped occupancy problem.

If $F$ is Gamma$(m+1, 1)$ with $m\geq 0$, then the hazard rate
$h(x)=F'(x)/\bF(x)$
is increasing to $1$. From this, (\ref{Tail-F}) holds in the form
\begin{equation}\label{upTg}
\frac{\overline{F}(x+y)} {\overline{F}(x)}=\exp\left( -\int_x^{x+y} h(z){\rm d}z\right) \leq e^{-h(x)y}\leq e^{-(1-\varepsilon)y},~~~~~\hskip0.5cm \varepsilon>0,
\end{equation}
for large enough $x$.

For another  example, suppose $F_n$ is {\rm Poisson}$(t_n)$ with $t_n\sim L=\log n$. The Poisson distribution also has an increasing hazard
rate (as being log-concave); thus for $x_n\sim eL$, and hence $t_n/x_n\to e^{-1}$,  (\ref{pPas}) gives
\begin{equation}\label{upTp}
\frac{\overline{P}_{x_n+k}(t_n)}{\overline{P}_{x_n}(t_n)}\le \left(\frac{\overline{P}_{x_n+1}(t_n)}{\overline{P}_{x_n}(t_n)}\right)^k
\leq e^{-(1-\varepsilon)k}, ~~~~~\hskip0.5cm k>0,
\end{equation}
for large enough $n$,
similarly to the Gamma example above.

\subsection{Tail estimates for stopped maximal  occupancy counts}
We combine the Gamma and Poisson bounds to obtain
tail estimates for $M_{n,i}(\tau_n)$.  The idea comes from the property of
the Poisson distribution in Lemma \ref{LSt},
 which  tells us that  for $t\sim L$ an increment $u$ of the parameter is compensated by about $v=(e-1)u$ change of the quantile.

Throughout the subsection, $k$ is a nonnegative integer;
$C_1,C_2,\dots$ and $c_1,c_2,\dots$ are strictly positive constants
that may disagree with those in Section \ref{GenS}.
\CCreset\ccreset
\begin{lemma}  \label{LT}
For any fixed $i\ge1$ and $\eps>0$,
there exist constants $C_{\cdot}$ and $c_{\cdot}$ (that may depend on $i$ and $\eps$)
such that for all $n\ge i$ and $k\ge0$,
\begin{align}\label{lt1}
{\mathbb P}[M_{n,i}(\tau_n)-b_n>k]&
\leq  \CC e^{-(1-\varepsilon)k/e},
\\\label{lt2}
{\mathbb P}[M_{n,i}(\tau_n)-b_n\le-k]&
\leq  \CC e^{-\cc e^{\ccname{\ccb} k}}
\leq  \CC e^{- k}
.\end{align}
\end{lemma}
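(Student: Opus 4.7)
The approach is to condition on $\tau_n$ via Fact \ref{FACT}, apply Lemma \ref{Le-Tails} in two places (to the conditional truncated Poisson sample and to $\tau_n$ itself), and combine the two bounds at the threshold $|s|=k/e$, which balances the tails. Setting $t=\alpha_n+s$, Fact \ref{FACT} identifies $M_{n,i}(\tau_n)$ (for $i\le n-\ell-1$, which covers all relevant $i$ once $n$ is large) with the $i$-th order statistic of $n-\ell-1$ i.i.d.\ truncated Poisson$(t)$ samples. The truncation from below preserves the increasing hazard rate, so the Poisson bound \eqref{upTp} yields hypothesis \eqref{Tail-F} with $c=1-\varepsilon$, and the quantile asymptotic \eqref{Tas} (via \refL{LSt}) identifies an approximate upper $1/n$-quantile at $x_n(t)=b_n+\lceil(e-1)s\rceil+O(1)$. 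Lemma \ref{Le-Tails} then yields the conditional estimates
\begin{align}
{\mathbb P}[M_{n,i}(\tau_n)-b_n> k\mid \tau_n=\alpha_n+s]&\le Ce^{-(1-\varepsilon)(k-(e-1)s)},\\
{\mathbb P}[M_{n,i}(\tau_n)-b_n\le -k\mid \tau_n=\alpha_n+s]&\le Ce^{-c_1\exp(c_2(k+(e-1)s))},
\end{align}
whenever the exponents on the right are nonnegative. By the same token, since $\tau_n$ is the $(\ell+1)$-st order statistic of $n$ i.i.d.\ Gamma$(m+1,1)$ variables with approximate upper $1/n$-quantile $\alpha_n$, Lemma \ref{Le-Tails} with the Gamma hazard bound \eqref{upTg} gives, uniformly in $n$,
\begin{equation}
{\mathbb P}[\tau_n-\alpha_n>s]\le De^{-(1-\varepsilon)s},\qquad {\mathbb P}[\tau_n-\alpha_n\le-s]\le De^{-d_1\exp(d_2 s)},\qquad s\ge0.
\end{equation}

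To derive \eqref{lt1}, I split at $s=k/e$. The second displayed bound gives ${\mathbb P}[\tau_n>\alpha_n+k/e]\le De^{-(1-\varepsilon)k/e}$; on the complement, $k-(e-1)(\tau_n-\alpha_n)\ge k/e$, so integrating the first conditional bound yields a contribution $\le Ce^{-(1-\varepsilon)k/e}$. Summing produces \eqref{lt1}. For \eqref{lt2}, the symmetric splitting at $s=-k/e$ gives ${\mathbb P}[\tau_n<\alpha_n-k/e]\le De^{-d_1\exp(d_2 k/e)}$, while on $\{\tau_n\ge \alpha_n-k/e\}$ one has $k+(e-1)(\tau_n-\alpha_n)\ge k/e$, so integrating the second conditional bound yields $Ce^{-c_1\exp(c_2 k/e)}$. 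Both contributions are doubly-exponential in $k$, and the weaker bound $\le C_3 e^{-k}$ follows by enlarging the constant.

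The main obstacle lies in establishing the conditional estimates with constants uniform in $n$ and in $s$ across the splitting window $|s|\le k/e$, whose width grows with $k$ and extends beyond the compact-$s$ regime where \refL{LSt} is directly applicable. A cleaner route, which I would actually follow, is to bypass \refL{LSt} for this step and to use the direct Chernoff bound $\overline{P}_x(t)\le \exp\bigl(-[x\log(x/t)-x+t]\bigr)$ for $x\ge t$ (and its mirror for $x\le t$). The Chernoff exponent is monotone in $s$ and attains value $t=L+s$ precisely on the critical line $x=et$, i.e., at $s=k/e$, which yields $n\,\overline{P}_{b_n+k}(\alpha_n+s)\le Ce^{-(1-\varepsilon)k/e}$ uniformly for $s\le k/e$. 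Combined with the elementary union bound $\binom{n-\ell-1}{i}p^i$ for the $i$-th order statistic, this gives the upper-tail bound with uniform constants; the lower-tail Chernoff supplies the doubly-exponential estimate in the same way. The remaining bookkeeping is to verify that the truncation factor $\overline{P}_m(t)\to 1$ introduces no loss and to track the resulting constants through the splitting.
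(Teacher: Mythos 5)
Your overall skeleton (split on $\tau_n-\gaa_n\gtrless\pm k/e$, control the stopping time via its realisation as a Gamma$(m+1,1)$ order statistic and \refL{Le-Tails}, and control the occupancy counts at the threshold) is exactly the paper's skeleton, but the step you yourself flag as the obstacle is where the proposal breaks down, and the fix you choose does not work. The missing idea in your Plan A is monotonicity of $t\mapsto M_{n,i}(t)$: on $\{\tau_n\le\gaa_n+k/e\}$ one has $M_{n,i}(\tau_n)\le M_{n,i}(\gaa_n+k/e)$, and on $\{\tau_n\ge\gaa_n-k/e\}$ one has $M_{n,i}(\tau_n)\ge M_{n,i}(\gaa_n-k/e)$; this replaces the random time by the two deterministic times $\gaa_n\pm k/e$ and removes any need for conditional bounds uniform over the growing window $|s|\le k/e$. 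At these times \refL{LSt} is directly applicable for essentially all $k$, because its hypotheses \eqref{sec-a1}--\eqref{sec-a2} are not a ``compact-$s$'' restriction: with $u=\pm k/e+O(1)$, $v=\pm k+O(1)$ one has $eu-v=O(1)$, and \eqref{sec-a1} only fails when $k>(1-\eps)b_n$ on the lower-tail side, a range handled separately by the cutoff trick of \eqref{bs}. This is the paper's proof; no conditioning on Fact~\ref{FACT} is needed at all for this lemma.

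Your Plan B (raw Chernoff bound instead of \refL{LSt}) is quantitatively insufficient for the statement as centred at $b_n$. The constant $\tb_n$ in \eqref{bt} contains the term $-\tfrac12\log L$ precisely to absorb the Stirling factor $(2\pi r)^{-1/2}$ in $p_r(t)$; the Chernoff bound $\overline P_x(t)\le\exp\bigl(-[x\log(x/t)-x+t]\bigr)$ drops this factor, so at $s=k/e$ it gives only $n\,\overline P_{b_n+k}(\gaa_n+k/e)\le C\sqrt{L}\,e^{-k/e}$ rather than $Ce^{-k/e}$ (compare \eqref{Tas}). Hence your claimed uniform bound, and with it \eqref{lt1}, fails for $1\ll k\lesssim \log\log n$, where $C\sqrt L e^{-(1-\eps)k/e}$ exceeds $1$ while the asserted bound tends to $0$; the constants cannot be made independent of $n$. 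The lower tail is worse: there you need a \emph{lower} bound on $\overline P_{b_n-k}(\gaa_n-k/e)$ of order $n^{-1}e^{k/e}$ to feed into the binomial computation of \refL{Le-Tails}, and Chernoff-type inequalities only bound tails from above; a Stirling-based estimate (i.e.\ \refL{LSt}, as in \eqref{gbg2}) is unavoidable, and the range $(1-\eps)b_n\le k\le b_n$ still needs the separate monotone-cutoff argument. So the proposal as written does not establish either \eqref{lt1} or \eqref{lt2}; reinstating \refL{LSt} at the two deterministic times, via the monotonicity reduction, is what closes the gap.
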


\begin{proof}
The estimates \eqref{lt1}--\eqref{lt2} are more or less trivial for
  each fixed $n$, so we may assume that $n$ is large when needed.

We consider first $\tau_n$.
Recall the realisation of  $\tau_n$ as a maximal order statistic from
Gamma$(m+1,1)$, and note that $\ga_n$ is an approximate upper $1/n$
quantile, see \eqref{an}--\eqref{mSt}.
Taking $x_n=\ga_n$, we see from \eqref{upTg} that
(at least for large $n$)
\eqref{Tail-F} holds with
$c=1-\eps$ and $\theta=\eps$.
Hence, \refL{Le-Tails} yields
\begin{align}\label{tau2>}
  {\mathbb P}[\tau_n-\gaa_n> k/e]&\leq  \CC e^{-(1-\varepsilon)k/e},
\\\label{tau2<}
  {\mathbb P}[\tau_n-\gaa_n< -k/e]&\leq  \CC e^{-\cc e^{(1-2\eps) k/e}}.
\end{align}

Consider now $M_{n,i}(\tau_n)$.
For the right tail, the event $M_{n,i}(\tau_n)-b_n>k$ implies that either
$\tau_n-\gaa_n>k/e$, or  $\tau_n-\gaa_n\leq k/e$ and then  $M_{n,i}(\gaa_n+k/e)-b_n>k$. Thus with the account of $M_{n,i}\leq M_{n,1}$,
\begin{equation}\label{stMup}
{\mathbb P}[M_{n,i}(\tau_n)-b_n>k]\leq  {\mathbb P}[\tau_n-\gaa_n> k/e]+
{\mathbb P}[M_{n,1}(\gaa_n+k/e)-b_n>k]
.\end{equation}
For the first part on the \rhs,  we apply (\ref{tau2>}).
 For the second part,  we have a
 bound
\begin{align}
{\mathbb P}[M_{n,1}(\gaa_n+k/e)-b_n>k] & =
1-(P_{b_n+k}(\alpha_n+k/e))^n\leq n  \overline{P}_{b_n+k}(\gaa_n+k/e)
\\&\notag
\leq C_2 e^{-k/e},
\end{align}
where the last inequality follows from
\refL{LSt} with $t=\ga_n+k/e$ and $r=b_n+k$ and thus $u=k/e+O(1)$ and
$v=k+O(1)$,
by discarding some negligible or negative terms in
\eqref{logP1}--\eqref{logP2}.
This proves \eqref{lt1}.

For the left tail, the event $M_{n,i}(\tau_n)-b_n\le-k$ implies that either
$\tau_n-\gaa_n<-k/e$, or  otherwise $\tau_n-\gaa_n\geq - k/e$ and then
$M_{n,i}(\gaa_n-k/e)-b_n \leq M_{n,i}(\tau_n)-b_n\le-k$.
Splitting this way  yields
\begin{equation}\label{stMdo}
{\mathbb P}[M_{n,i}(\tau_n)-b_n\le-k]\leq  {\mathbb P}[\tau_n-\gaa_n<- k/e]+
{\mathbb P}[M_{n,i}(\gaa_n-k/e)-b_n\le-k]
,\end{equation}
where the first part is estimated using  (\ref{tau2<}).
To bound  the  left tail of  $M_{n,i}(\gaa_n-k/e)$, we only need to take
care of $k$ within the range $k\leq b_n$,
since otherwise $\P[M_{n,i}(\gaa_n-k/e)-b_n\le-k]=0$.
We consider first $k\le (1-\eps)b_n$
using \refL{LSt} with
$t=\gaa_n-k/e$ and $r=b_n-k$, and thus $u=-k/e+O(1)$ and $v=-k+O(1)$;
note that in this range
\begin{align}\label{gbg1}
  \frac{u}{L} \ge -\frac{(1-\eps)b_n}{eL}+o(1) \sim -(1-\eps)
\end{align}
and thus \eqref{sec-a1} holds.
The right-hand side of \eqref{logP2} becomes $-L+k/e+O(1)$,
and thus \eqref{logP1} yields
\begin{align}\label{gbg2}
\bP_{b_n-k}(\gaa_n-k/e)\ge \ccname{\cca} n\qw e^{k/e}.
\end{align}
The event $M_{n,i}(\gaa_n-k/e)-b_n\le-k$ holds when less than $i$ of the
occupancy counts $\Pi_j(\gaa_n-k/e), j\in[n],$ are greater than $b_n-k$.
We may thus argue as in \eqref{bs1} and \eqref{va3}
(with $\overline{F}_n(x_n-y)$ replaced by $\overline{P}_{b_n-k}(\gaa_n-k/e)$)
and obtain from \eqref{gbg2}
\begin{align}\label{gbg3}
  {\mathbb P}[M_{n,i}(\gaa_n-k/e)-b_n\le-k]
\le \CC e^{ik/e-\cca e^{k/e}}.
\end{align}
The first inequality in \eqref{lt2} follows from \eqref{stMdo},
\eqref{tau2<}, and \eqref{gbg3}, under our assumption $k_n\le(1-\eps)b_n$.
The remaining range $(1-\eps)b_n\leq k\leq b_n$ is dealt with as in
(\ref{bs}).

Finally, the second inequality in \eqref{lt2} is trivial.
\end{proof}
The proof shows (replacing $\eps$ by $\eps/3$) that
we may take $\ccb$ as $(1-\eps)/e$.

\subsection{Moments of the stopped maximal  occupancy counts: approximability}

Complementing Theorem \ref{stoppedmax}, we assert that the analogous result  also holds for the mean and higher moments, in the natural sense.
\begin{theorem}\label{stMom}
 For fixed $i, k\in{\mathbb Z}_{>0}$,
as \ntoo,
\begin{align}\label{stmom}
  {\mathbb E}[(M_{n,i}(\tau_n)-b_n)^k]= {\mathbb E}[   \lceil \xi_i+(e-1)\tau+c_n\rceil^k ]+o(1).
\end{align}
\end{theorem}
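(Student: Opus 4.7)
The plan is to derive \eqref{stmom} by combining the total-variation approximation of \refT{stoppedmax} with the uniform exponential tail bounds of \refL{LT}. Write $X_n:=M_{n,i}(\tau_n)-b_n$ and $Y_n:=\lceil\xi_i+(e-1)\tau+c_n\rceil$, so that \refT{stoppedmax} (with $K=i$, projected onto the $i$-th coordinate) gives
\begin{equation*}
\delta_n:=\dtv(X_n,Y_n)\to 0.
\end{equation*}
The target \eqref{stmom} is precisely $\E X_n^k-\E Y_n^k=o(1)$, so it suffices to convert the total-variation convergence into moment convergence, which is a standard uniform-integrability argument once the moments of $X_n$ and $Y_n$ are uniformly controlled.

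First, establish uniform moment bounds. \refL{LT} shows that for every fixed $p\ge1$ there is a constant $\CC(p)$ such that $\P[|X_n|>k]\le \CC(p) e^{-\ccname{\ccx}k}$ uniformly in $n$ and $k\ge 0$, hence $\sup_n\E[|X_n|^p]<\infty$. For $Y_n$, the variables $\xi_i\eqd\textrm{Gumbel}(i)$ and $\tau\eqd\textrm{Gumbel}(\ell+1)$ each have finite moments of every order, $c_n\in[0,1]$ is bounded, and the ceiling adds at most $1$; therefore $\sup_n\E[|Y_n|^p]<\infty$ as well.

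Second, convert to moment convergence. Using $\dtv(X_n,Y_n)=\inf\P[X_n\neq Y_n]$, choose a coupling realising $\P[X_n\neq Y_n]=\delta_n$. Then by Cauchy--Schwarz, the elementary bound $|X_n^k-Y_n^k|\le|X_n|^k+|Y_n|^k$, and the uniform $2k$-th moment bounds from the previous step,
\begin{align*}
|\E X_n^k-\E Y_n^k|
&\le \E\bigl[|X_n^k-Y_n^k|\indic{X_n\neq Y_n}\bigr] \\
&\le \bigl(\E[(|X_n|^k+|Y_n|^k)^2]\bigr)^{1/2}\,\delta_n^{1/2} \\
&\le \bigl(2\E[|X_n|^{2k}]+2\E[|Y_n|^{2k}]\bigr)^{1/2}\,\delta_n^{1/2}
\;\longrightarrow\; 0,
\end{align*}
which proves \eqref{stmom}.

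The main obstacle is the uniform tail control for $X_n$; once \refL{LT} is granted, the remainder is a routine uniform-integrability argument. An alternative route, avoiding an explicit coupling, would fix a large threshold $N$, split each expectation into $\{|\cdot|\le N\}$ and $\{|\cdot|>N\}$, bound the bounded parts by $N^k\dtv(X_n,Y_n)=o(1)$, and control the tail parts uniformly using the moment bounds; letting $N\to\infty$ after $n\to\infty$ yields the same conclusion.
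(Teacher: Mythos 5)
Your proof is correct, and it uses the same two ingredients as the paper (the total variation approximation of \refT{stoppedmax} and the uniform exponential tails of \refL{LT}), but the mechanism for passing from distributional closeness to moment closeness is genuinely different. The paper's proof handles the oscillation in $c_n$ by extracting subsequences along which $c_{n_j}\to c_0$, invoking weak convergence from \refT{stoppedmax} together with uniform integrability (supplied by \refL{LT}) to get moment convergence to the limit variable $\lceil\xi_i+(e-1)\tau+c_0\rceil$, and then appealing to \refL{LeLem2} to upgrade the subsequential statements to the approximation \eqref{stmom}. You instead compare directly with the $n$-dependent approximant $Y_n=\lceil\xi_i+(e-1)\tau+c_n\rceil$: projecting \refT{stoppedmax} onto one coordinate (total variation does not increase under marginalisation) gives $\delta_n:=\dtv(X_n,Y_n)\to0$, a maximal coupling realises $\P[X_n\ne Y_n]=\delta_n$, and Cauchy--Schwarz together with the uniform $2k$-th moment bounds (from \refL{LT} for $X_n$, and from finiteness of all Gumbel moments plus boundedness of $c_n$ for $Y_n$) yields $|\E X_n^k-\E Y_n^k|=O(\delta_n^{1/2})$. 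This avoids the subsequence/continuity machinery of \refL{LeLem2} altogether and has the small added benefit of being quantitative: any rate for $\delta_n$ transfers to a rate for the moment discrepancy. The only points worth stating explicitly are the existence of a maximal coupling (standard for distributions on $\bbZ$) and the fact that the constants in \refL{LT} are uniform in $n\ge i$, which is exactly what the lemma provides; your parenthetical dependence of the tail constant on $p$ is spurious but harmless.
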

\begin{proof}
The exponential tail bounds in \refL{LT}
imply that the sequence $(M_{n,i}(\tau_n)-b_n)^k$ is uniformly integrable.
If  the fractional parts $c_{n_j}$ converge to some $c_0$ along a subsequence $(n_j)$, then  Theorem \ref{stoppedmax} ensures
a weak convergence
\begin{align}
M_{n_j,i}(\tau_{n_j})-b_{n_j}\stackrel{\rm d}{\to}\lceil\xi_i+(e-1)\tau+c_0\rceil,
\end{align}
which together with the uniform integrability imply the convergence of all moments along $(n_j)$.
The assertion now readily follows from the fact that every infinite set of
positive integers contains such $(n_j)$ (see \refL{LeLem2}).
\end{proof}

\subsection{Computing the asymptotic moments}
Extending notation (\ref{dist-Z}), denote the approximating variable
\begin{equation}\label{Zni}
Z_{n,j}:=\lceil \xi_j+(e-1)\tau+c_n \rceil,
\end{equation}
where $\xi_j\stackrel{\rm d}{=}{\rm Gumbel}(j)$ and $\tau\stackrel{\rm
  d}{=}{\rm Gumbel}(\ell+1)$ are independent. These have the familiar
characteristic functions, which are easily shown from e.g.\ the density
\eqref{sa3},
\begin{equation}\label{cf}
{\mathbb E}[e^{\ii x\xi_j}]=\frac{\Gamma(j-\ii x)}{(j-1)!},
\qquad
{\mathbb E}[e^{\ii x\tau}]=\frac{\Gamma(\ell+1 -\ii x)}{\ell!}
\end{equation}
and expected values
\begin{equation}\label{m-xi-tau}
{\mathbb E}[\xi_j]=\gamma-H_{j-1}, \qquad {\mathbb E}[\tau]=\gamma-H_{\ell},
\end{equation}
where $\gamma\doteq0.57721$ is the Euler constant and $H_k:=\sum_{j=1}^k 1/j$ (so $H_0:=0$).

To evaluate the mean  of  (\ref{Zni}) we may apply \cite[Theorem
2.3]{SJ175}, which asserts that for a continuous
random variable $X$ with characteristic function $\varphi$
\begin{equation}\label{round-m}
{\mathbb E}\,
\lceil X \rceil=\E(X)+\frac{1}{2}+\sum_{k\in {\mathbb Z}\setminus\{0\}} \frac{\varphi (2\pi k) }{2\pi \ii k},
\end{equation}
provided $\varphi(x)=O(|x|^{-\varepsilon})$ for $x\to\pm\infty$.  For (\ref{Zni}) this condition is readily justified using the functional
recursion
\begin{equation}\label{gamma-r}
\Gamma(k-\ii x)={(-\ii x)_k}\,\Gamma(-\ii x),
\end{equation}
where $(x)_\ell$ denotes the Pochhammer factorial,
together with
the reflection formula
\cite[5.5.1 and 5.5.3]{NIST},
which yield (see also \cite[5.11.9]{NIST}),
\begin{equation}\label{gamma-inf}
|\Gamma(\ii x)|^2 =\frac{\pi}{x \sinh \pi x}\sim \frac{2\pi}{|x|}e^{-\pi|x|}, ~~~\qquad~x\to\pm\infty.
\end{equation}
Applying (\ref{round-m}) and (\ref{cf})--(\ref{m-xi-tau}), for $i\geq 1$ and $\ell\geq 0$,
\begin{align}\label{mean-Z}
{\mathbb E} [Z_{n,j}]&= \gamma e-H_{j-1}-(e-1)H_\ell + \frac{1}{2}+c_n
\\\notag&\qquad
+\sum_{k\in {\mathbb Z}\setminus\{0\}}
 \frac{ \Gamma(j -2\pi \ii   k)   \Gamma(\ell+1 - 2\pi(e-1)\ii  k)}{(j-1)!\,\ell!}\cdot \frac{e^{2\pi \ii k c_n} }{2\pi \ii k }
.\end{align}
Consequently, \eqref{stmom} yields, recalling \eqref{bc},
\begin{align}\label{mean-M}
  \E [M_{n,j}] &= b_n+\E Z_{n,j}+o(1)
\\\notag&
= a_n +\gamma e-H_{j-1}-(e-1)H_\ell + \frac{1}{2}
\\\notag&\qquad
+\sum_{k\in {\mathbb Z}\setminus\{0\}}
 \frac{ \Gamma(j -2\pi \ii   k)   \Gamma(\ell+1 - 2\pi(e-1)\ii
  k)}{(j-1)!\,\ell!}\cdot \frac{e^{2\pi \ii k c_n} }{2\pi \ii k }
+o(1).
\end{align}
The sum in \eqref{mean-Z} and \eqref{mean-M}
is a Fourier series with  small and rapidly
decreasing coefficients, as is seen from (\ref{gamma-r})--(\ref{gamma-inf}).

For example, in the case $j=1,\ell=0$ (CCP and dixie cup problems) the coefficients have asymptotics
\begin{align}\label{wi1}
(2 \pi k)\qw\bigabs{\gG(1-(e-1) 2 \pi \ii k)\gG(1- 2 \pi \ii k)}
&=
\frac{\pi(e-1)^{1/2}}{(\sinh(2\pi^2k)\sinh(2(e-1)\pi^2k))^{1/2}}
\\&\notag
\sim {2\pi(e-1)^{1/2}}e^{-e\pi^2k}.
\end{align}
The terms with $k=\pm1$ are the largest  by far and have the absolute value
about $0.18379 \cdot 10^{-10}$;  hence the sum in \eqref{mean-Z} makes to the mean a small,
oscillating on the  $\log n$ scale
contribution, whose amplitude does not exceed  $4\cdot 10^{-11}$.

The Fourier coefficients increase with $j$ and $\ell$, though remain
small. For instance, for $\ell=0$ the coefficient for $k=\pm1$  is bounded
in absolute value, for all $j\geq 1$, by (since $|\E[e^{2\pi\ii k\xi_j}]|\le1$)
\begin{align}
  (e-1)\bigabs{\gG(-\ii(e-1)2\pi)} \doteq 0.56552 \cdot 10^{-7}.
\end{align}

Replacing $\tau$ in (\ref{Zni}) by a constant gives random variables approximating (unstopped) maximal order statistics from a Poisson distribution,
as in Theorem \ref{ThFM}. In that setting the first Gamma factor in the sum (\ref{mean-Z}) disappears, making the fluctuations  somewhat larger.
The intensity measure of  our approximating process $\Xi^\uparrow$ has masses decreasing geometrically, so naturally
the  maxima in the occupancy scheme behave similarly to the maxima in samples from a geometric distribution
 (see for the latter \cite{KiPr} and \cite[Example 4.3]{SJ175}).
Making this comparison, it should be noted that
in the occupancy regime of interest here  the parameter of Poisson
distribution (the mean number of balls) changes together with $n$ (the
number of boxes),
but the parameter of the
asymptotic geometric distribution is a fixed value $1-e^{-1}$ that does not
depend on $n$ (see \eqref{logP1}).

Similar Fourier series with small coefficients
(typically involving a Gamma function)
are also known from many different problems, see e.g.\
the examples in
\cite{HP} and
\cite[Sections 2 and 4]{SJ175} and the references there.
In the present situation, the terms in the sum in \eqref{mean-M}
contain a
product of two Gamma functions,
which makes the coefficients
even smaller than in many other similar examples.

The technique  from  \cite{SJ175} may be further applied to obtain formulas for  the variance and higher moments of $Z_{n,j}$.

\section{Multiplicity of the maximum}\label{M-max}

\noindent
Finally we consider the multiplicity of the stopped maximum occupancy count,
\begin{align}
Q_n:=\min \{i: M_{n,i}>M_{n,i+1}\}=\mu_{n,r^*}(\tau_n),
\end{align}
where $r^*=\max\{r: \mu_{n,r}(\tau_n)>0\}$.
The distribution of $Q_n$ does not converge,
 because of oscillations,
but we can  obtain a good approximation by turning to its counterpart
for a randomly shifted  exponential process.

For a shift parameter $u\in{\mathbb R}$ let $\chi_j(u)$ be the probability that the rightmost atom of   $\Xi^\uparrow_{(e-1)\tau+u}$  has multiplicity $j$.
\begin{theorem}\label{TQ}
As \ntoo, for every fixed $j\in{\mathbb Z}_{>0}$,
\begin{equation}\label{tqs}
\P[Q_n=j]
=\chi_j(a_n)+o(1)
=\chi_j(c_n)+o(1),
\end{equation}
where $a_n$ and $c_n=\{a_n\}$ are given by \eqref{bt}--\eqref{bc}, and
 $\qx_j$ introduced above is  a continuous, $1$-periodic function,   representable  by the
Fourier series
\begin{align}\label{tq2s}
  \qx_j(u) =
\frac{\bigpar{1-e^{-1}}^j}{j}
\left(1+
\sum_{k\in{\mathbb Z}\setminus\{0\}}
\frac{\gG({j-2\pi \ii k })
\gG(\ell+1-2\pi (e-1)k\ii)}{(j-1)!\ell!}
e^{2\pi \ii k u}\right).
\end{align}
\end{theorem}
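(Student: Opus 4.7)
My strategy is first to reduce the statement to a computation of $\chi_j$ via the approximation theorem \refT{stoppedmax}, and then to compute $\chi_j$ explicitly by Poisson summation together with the characteristic function of $\tau$.

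\textbf{Reduction.} Note that $\{Q_n=j\}=\{M_{n,1}=\cdots=M_{n,j}>M_{n,j+1}\}$ is a function of only the top $j+1$ order statistics. Applying \refT{stoppedmax} with $K=j+1$,
\begin{align*}
  \P[Q_n=j]=\P\bigsqpar{\lceil\xi_1+Y\rceil=\cdots=\lceil\xi_j+Y\rceil>\lceil\xi_{j+1}+Y\rceil}+o(1),
  \qquad Y:=(e-1)\tau+c_n.
\end{align*}
The event on the right is exactly that the rightmost atom of $\Xi^\uparrow_{Y}$ has multiplicity $j$; in other words, it equals $\chi_j(c_n)$ by the definition of $\chi_j$. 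Since shifting by an integer permutes the atoms of $\Xi^\uparrow_u$ along $\bbZ$ without altering the multiplicity of the maximum, $\chi_j$ is $1$-periodic, and $a_n-c_n=b_n\in\bbZ$ gives $\chi_j(a_n)=\chi_j(c_n)$. This yields the two approximations in \eqref{tqs}.

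\textbf{Computing $\chi_j$.} Conditioning on $\tau=s$ and using the independence of Poisson counts of $\Xi^\uparrow$ at different sites (with intensities \eqref{lxx1} and \eqref{lxx1s}), the event ``rightmost atom at level $r$, multiplicity $j$'' has conditional probability $p_j((e-1)\lambda_r)\,p_0(\lambda_r)$ with $\lambda_r=e^{(e-1)s+u-r}$. Summing over $r\in\bbZ$ and taking expectation gives
\begin{align*}
  \chi_j(u)=\E\sum_{r\in\bbZ}\phi_j\bigpar{r-(e-1)\tau-u},
  \qquad
  \phi_j(x):=\frac{((e-1)e^{-x})^j}{j!}\,e^{-e\,e^{-x}}.
\end{align*}
The function $\phi_j$ decays super-exponentially as $x\to-\infty$ (from the factor $e^{-e\,e^{-x}}$) and like $e^{-jx}$ as $x\to+\infty$, so Poisson summation applies. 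The substitution $y=e\,e^{-x}$ evaluates the Fourier coefficients:
\begin{align*}
  \widehat{\phi}_j(m)
=\int_\bbR\phi_j(x)e^{-2\pi\ii m x}\dx
=\frac{(1-e^{-1})^j}{j!}\,\Gamma(j+2\pi\ii m),
\end{align*}
using $e^{-2\pi\ii m}=1$. Plugging in and recognizing the characteristic function of $\tau\stackrel{\rm d}{=}\Gumbel(\ell+1)$ from \eqref{cf} applied at argument $-2\pi(e-1)m$,
\begin{align*}
  \chi_j(u)=\frac{(1-e^{-1})^j}{j!\,\ell!}\sum_{m\in\bbZ}\Gamma(j+2\pi\ii m)\Gamma\bigpar{\ell+1+2\pi(e-1)\ii m}e^{-2\pi\ii m u}.
\end{align*}
Reindexing $m\to-m$ and separating the $m=0$ term (where $\Gamma(j)\Gamma(\ell+1)=(j-1)!\,\ell!$ produces the prefactor $(1-e^{-1})^j/j$) yields exactly \eqref{tq2s}. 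Absolute convergence and continuity of the series are ensured by \eqref{gamma-inf}, which gives the combined decay $|\Gamma(j-2\pi\ii m)\Gamma(\ell+1-2\pi(e-1)\ii m)|=O(e^{-e\pi^2|m|})$.

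\textbf{Main obstacle.} The principal technical point is verifying the applicability of Poisson summation (routine here since $\phi_j$ has Schwartz-like decay at both ends) and the Fubini interchange that lets us bring the expectation over $\tau$ inside the Fourier series; absolute convergence of $\sum_m |\widehat\phi_j(m)|\cdot|\E[e^{-2\pi\ii m(e-1)\tau}]|$ (which is immediate from \eqref{gamma-inf}) legitimises this.
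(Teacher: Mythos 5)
Your proposal is correct and follows essentially the same route as the paper: reduce $\P[Q_n=j]$ to the corresponding event for the approximating vector via \refT{stoppedmax} (with $K=j+1$), use integer-shift invariance for $1$-periodicity, and compute the Fourier coefficients, which factor into a Gamma value coming from the geometric-intensity lattice process and a Gamma value coming from the characteristic function of $\tau\stackrel{\rm d}{=}{\rm Gumbel}(\ell+1)$. The only difference is packaging: the paper first computes the multiplicity law $q_j$ of the deterministically shifted process and then convolves with the law of $-(e-1)\tau$ (multiplying Fourier coefficients), whereas you fold the random shift in from the start and invoke Poisson summation — the same computation in a slightly different order.
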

\begin{proof}
The approximability (\ref{tqs}) follows straight
 by Theorem \ref{ThSM} (or Corollary \ref{joint-mm}).
The continuity and   $1$-periodicity of $\qx_j$ both follow from
\eqref{xi+} and
the exponential intensity (\ref{lxx1}), since
for  $c\in {\mathbb Z}$ the shift
\begin{align}
\Xi_{b+c}^\uparrow\stackrel{\rm d}{=}\Xi_b^\uparrow+c
\end{align}
preserves the multiplicity of the maximum; in particular,
$\qx_j(a_n)=\qx_j(c_n)$.

To prove (\ref{tq2s})
we start with evaluating a simpler probability, denoted $q_j(u)$,  of the event that the rightmost atom of $\Xi_{u}^\uparrow$ has multiplicity $j\geq 1$. Arguing as in Corollary \ref{joint-mm},
and manipulating the Poisson probabilities,
\begin{align}
q_j(u)&=\sum_{k\in{\mathbb Z}} p_j((e-1)e^{u-k})p_0(e^{u-k})
\\\notag&
=(1-e^{-1})^j  \sum_{k\in{\mathbb Z}} p_0((e-1)e^{u-k})p_0(e^{u-k})
  \frac{e^{(u-k+1)j} }{j!}
\\\notag&=
(1-e^{-1})^j     \sum_{k\in{\mathbb Z}} p_0(e^{u-k+1})    \frac{e^{(u-k+1)j} }{j!}
\\\notag&=
(1-e^{-1})^j     \sum_{k\in{\mathbb Z}} p_j(e^{u-k+1})
\\\notag&=
(1-e^{-1})^j     \sum_{k\in{\mathbb Z}} p_j(e^{u-k}).
\end{align}
We cannot evaluate this sum explicitly, but it is easy to find its Fourier transform:
\begin{align}\label{se7s}
  \hq_j(k)&:=\intoi e^{-2\pi\ii k u} q_j(u)\dd u
\\\notag&\phantom:
=\frac{\bigpar{1-e^{-1}}^j}{j!}\sum_{r\in{\mathbb Z}} \intoi
 e^{j(u-r)}e^{-e^{u-r}}
e^{-2\pi\ii k u}\dd u
\\\notag&\phantom:
=\frac{\bigpar{1-e^{-1}}^j}{j!}\intoooo
 e^{j u}e^{-e^{u}} e^{-2\pi\ii k u}\dd u
\\\notag&\phantom:
=\frac{\bigpar{1-e^{-1}}^j}{\ell!}\intoo
 e^{-v} v^{j-2\pi\ii k-1}\dd v
\\\notag&\phantom:
=\frac{\bigpar{1-e^{-1}}^j}{j!}\gG({j-2\pi\ii k}).
\end{align}
The Fourier coefficient (\ref{se7s}) appeared  in \cite{Brands} without proof
(in Lemma 4.3 of that paper
the value $\lambda=1$ corresponds to the case of sampling from
Geometric$(1-e^{-1})$), and was also identified in \cite{KiPr} by the calculus of residues.

We return to   $\qx_j$  and note that, for $u\in\bbR$,
\begin{align}\label{se9}
  \qx_j(u)=\E[ q_j\xpar{(e-1)\tau+u}].
\end{align}
We calculate the Fourier coefficients again: for $k\in{\mathbb Z}$,
\begin{align}\label{se10}
  \hqx_j(k)&
=\intoi e^{-2\pi\ii ku}\qx_j(u)\dd u
=\E \intoi e^{-2\pi\ii ku}q_j\xpar{(e-1)\tau+u}\dd u
\\\notag&
=\E \intoi e^{-2\pi\ii k(v-(e-1)\tau)}q_j(v)\dd v
= \hq_j(k)\cdot \E [ e^{2\pi \ii k(e-1)\tau}]
\\\notag&
=
\frac{\bigpar{1-e^{-1}}^j}{j!}
\gG({j-2\pi k\ii })
\frac{\gG(\ell+1-2\pi (e-1)k\ii)}{\ell!},
\end{align}
where by the change of variable we used $1$-periodicity,  and for the last
step we used (\ref{se7s}) and the characteristic function (\ref{cf}) of
$\tau\stackrel{\rm d}{=}{\rm Gumbel}(\ell+1)$.
This completes the proof of \eqref{tq2s}.

The last step of the proof can also be interpreted as follows:
The formula \eqref{se9} implies that $\chi_j$ is the
convolution of $q_j$ and the density function of $-(e-1)\tau$,
and \eqref{se10} then is the standard fact that the Fourier transform of the
convolution of two functions is the product of their Fourier transforms.
\end{proof}

  The Fourier series in \eqref{tq2s} is similar to \eqref{mean-Z}.
As there, the Fourier
coefficients in \eqref{tq2s} decrease rapidly as $|k|$ increases, and the sum
is a very small oscillating term.
For example, for $j=1$ and $\ell=0$, we have already for $k=\pm1$,
similarly to \eqref{wi1},
\begin{align}\label{tq3}
\bigabs{\hqx_1(\pm1)}
&=(1-e\qw)
\bigabs{
\gG({1\mp2\pi \ii })
\gG(1\mp2\pi (e-1)\ii)
}
\\\notag&
=(1-e\qw)\Bigpar{\frac{\pi \cdot 2\pi}{\sinh(2\pi^2)} \cdot
\frac{\pi \cdot 2\pi (e-1)}{\sinh(2(e-1)\pi^2)} }^{1/2}
\\\notag&
\approx {4\pi^2(e-1)^{3/2}}{e^{-e\pi^2-1}}
\doteq 0.730\cdot 10^{-10}.
  \end{align}
Hence, $\qx_1(u)$ varies about its `mean' $1-e^{-1}$ with small amplitude
of the order $10^{-10}$.
The oscillations are somewhat larger for larger $j$, but still small, and
thus $\qx_j(u)$ is well approximated by its mean $(1-e^{-1})^j/j$.
Hence, \refT{TQ} implies that for large $n$, the distribution of $Q_n$ is
for practical purposes
well approximated by the logarithmic distribution
\begin{align}
  \P[Q_n=j]\approx \frac{(1-e^{-1})^j}{j},
\qquad j=1,2,\dots
\end{align}
In particular, the maximum is unique with probability close to $1-e^{-1}$.

As mentioned in the proof,  $q_j$  has appeared in connection with the multiplicity of the maximum in a sample from Geometric$(1-e^{-1})$.
For this case Brands, Steutel and Wilms \cite[Remark 2]{Brands} observe that the fluctuations of $q_1$ are on the scale $10^{-4}$. For
the stopped maximum occupancy count these are smaller, again due to the smoothing resulting from the randomisation.

\section{Numerics}
In Table~\ref{t1}, we compare, for the CCP case $m = \ell = 0$, the results
of simulations of $M_n$ with the expectation $E_n$ of the approximation
in~\eqref{ti3}. By~\eqref{bc} and~\eqref{Zni}, this approximation can be
written as $b_n + Z_{n,1}$; thus, $E_n= b_n + \E[Z_{n,1}]$. Furthermore,
by~\eqref{mean-Z} and~\eqref{wi1}, $E_n$ equals
\begin{align}
a_n + \gamma e + \frac{1}{2}
\end{align}
within the precision used here; see also~\eqref{mean-M}.
Due to computational complexity, we used $100{,}000$ Monte Carlo simulations for $n = 10$ and $n = 100$; $10{,}000$ simulations for $n = 10^3, 10^4, 10^5, 150{,}000$; $1{,}000$ simulations for $n = 200{,}000; 250{,}000; 500{,}000$; and $300$ simulations for $n = 10^6$. For each considered value of $n$, we report the mean of the simulated value $\widetilde{M_n}$ of $M_n$, and the standard deviation of this mean, in columns three and four, respectively.

\begin{table*}[h!]
\caption{}
\label{t1}
\begin{center}
\begin{tabular}{rccccc}
  \hline
  $n$ & $E_n$
& mean($\widetilde{M_n}$) & std(mean($\widetilde{M_n}$))\\
  \hline
  10 &                              \phantom{0}5.95083                  & \phantom{0}5.71614         &  0.00677 \\
  100&                                         11.86333                 &  11.66791                  &  0.00750\\
  1,000&                                       17.91967                 &  17.76950                  &  0.02410\\
  10,000&                                      24.03490                 &  23.90320                  &  0.02471 \\
  100,000&                                     30.18241                 &  30.10330                  &  0.02494\\
  150,000&                                     31.26727                 &  31.17440                  &  0.02457\\
  200,000&                                     32.03735                 &  32.09200                  &  0.08153  \\
  250,000&                                     32.63485                 &  32.45600                  &  0.07721\\
  500,000&                                     34.49189                 &  34.44200                  &  0.08071\\
  1,000,000&                                   36.35032                 &  36.12333                  &  0.15089\\
  \hline
\end{tabular}
\end{center}
\end{table*}

In Table~\ref{t2}, we compare, for various pairs $(\ell, m)$ and for $n = 10^2, 10^3, 10^4, 10^5$, the results of simulations of $M_n$ with the expected value $E_n$ from the approximation in~\eqref{ti3}.
In a similar manner to Table~\ref{t1}, and using~\eqref{mean-Z} and~\eqref{wi1}, the quantity $E_n$ equals
\begin{align}
  a_n + \gamma e - (e - 1)\sum_{j=1}^{\ell} \frac{1}{j} + \frac{1}{2}
\end{align}
within the level of precision used here; see also~\eqref{mean-M}.
The number of Monte Carlo simulations is abbreviated by MC and reported in the table.
For each considered triple $(n, \ell, m)$, we report the mean of the simulated value $\widetilde{M_n}$ of $M_n$ in the sixth column, and the standard deviation of this mean in the seventh column.

\begin{table*}[h!]
\caption{}
\label{t2}
\centering
\begin{tabular}{cccc|ccc}
\toprule
$n$ & MC & $\ell$ & $m$ & $E_n$ & mean($\widetilde{M_n}$) & std(mean($\widetilde{M_n}$))\\
\midrule
\multirow{6}{*}{100} & \multirow{6}{*}{$10^5$}     & 0 & 1 &  14.48746       &       14.83162        &    0.00818 \\
                     &                             & 0 & 2 &  15.92055       &       17.48246        &    0.00876  \\
                     &                             & 0 & 3 &  16.65695       &       19.86110        &    0.00922 \\
                     &                             & 5 & 0 &  7.93992        &       7.94814         &    0.00412\\
                     &                             & 10& 0 &  6.83053        &       6.77567         &    0.00358  \\
                     &                             & 25& 0 &  5.30644        &       5.03896         &    0.00289 \\
\midrule
\multirow{6}{*}{1,000} & \multirow{6}{*}{$10^4$}    & 0 & 1     & 21.24050       &   21.48190        &    0.02576   \\
                       &                            & 0 & 2     & 23.37030       &   24.58420        &    0.02748 \\
                       &                            & 0 & 3     & 24.80341       &   27.27850        &    0.02844 \\
                       &                            & 10 & 0    & 12.88688       &   12.88840        &    0.01240  \\
                       &                            & 50 & 0    & 10.18877       &   9.94790         &    0.01108  \\
                       &                            & 100 & 0   & 9.00629        &    8.54890        &   0.00926 \\
\midrule
\multirow{6}{*}{10,000} & \multirow{6}{*}{$10^3$}      & 0     & 1 & 27.85005 &     28.16300 & 0.08306 \\
                        &                              & 0     & 3 & 32.40160 &     34.61200 & 0.08886 \\
                        &                              & 0     & 5 & 34.88438 &     39.89800 & 0.09507 \\
                        &                              & 10    & 0 & 19.00211 &     19.00500 & 0.04169 \\
                        &                              & 100   & 0 & 15.12153 &     14.89000 & 0.03272 \\
                        &                              & 1,000 & 0 & 11.17276 &     10.08900 & 0.02649 \\
\midrule
\multirow{6}{*}{100,000} & \multirow{6}{*}{$10^2$}     & 0     & 1 & 34.38098  & 34.72000 & 0.23956 \\
                         &                             & 0     & 2 & 37.38853  & 38.31000 & 0.32495\\
                         &                             & 0     & 3 & 39.69937  & 41.60000 & 0.26967 \\
                         &                             & 0     & 5 & 42.94900  & 47.39000 & 0.25776 \\
                         &                             & 100   & 0 & 21.26903  & 21.23000 & 0.13015 \\
                         &                             & 1,000 & 0 & 17.32026  & 16.73000 & 0.09832 \\
                         &                             & 10,000& 0 & 13.36454  & 11.47000 & 0.08343 \\
\bottomrule
\end{tabular}
\end{table*}

\begin{acks}[Acknowledgments]
The first author is indebted to Alexander Marynych for useful discussions and pointers to the literature.
Special thanks go to Petr Akhmet'ev for locating and sending us a copy of \cite{Ivch1}.
The third author would like to thank Boris Alemi for running the MATLAB program on the department’s computer.
\end{acks}

\begin{funding}
The second author is funded by
the Knut and Alice Wallenberg Foundation
and
the Swedish Research Council (VR). The third author is supported in part by BSF grant 2020063.
\end{funding}

\end{document}